\numberwithin{equation}{section}
\newcommand{\cT}{\mathcal T}
\newcommand{\cS}{\mathcal S}
\newcommand{\C}{\mathbb{C}}
\newcommand{\R}{\mathbb{R}}
\newcommand{\cO}{\mathcal{O}}
\newcommand{\tb}{\bullet}
\newcommand{\tw}{\circ}
\newcommand{\frb}{\mathfrak{b}}
\newcommand{\frw}{\mathfrak{w}}
\newcommand\ssp{{\scriptscriptstyle +}}
\newcommand\ssm{{\scriptscriptstyle -}}
\newcommand\sspm{{\scriptscriptstyle \pm}}
\newcommand\ssmp{{\scriptscriptstyle \mp}}
\newcommand\FF[1]{F^{[#1]}}
\newcommand\ff[1]{f^{{\scriptscriptstyle [}#1{\scriptscriptstyle ]}}}
\def\spl{\mathrm{spl}}
\def\frw{\mathfrak{w}}
\def\frb{\mathfrak{b}}
\def\fw{{f{}_{\!\frw}}}
\def\fb{{f{}_{\!\frb}}}
\def\ofw{{\overline{f}{}_{\!\frw}}}
\def\ofb{{\overline{f}{}_{\!\frb}}}
\def\pazeta{\partial_{\vphantom{\bar{\zeta}}\zeta}}
\def\opazeta{\partial_{\bar{\zeta}}}
\newcommand{\wc}{u^\tw}
\newcommand{\bc}{u^\tb}
\theoremstyle{plain}
\newtheorem{theorem}{Theorem}[section]
\newtheorem{corollary}[theorem]{Corollary}
\newtheorem{assumption}[theorem]{Assumption}
\newtheorem*{assumption*}{Assumption}
\newtheorem{lemma}[theorem]{Lemma}
\newtheorem{definition}[theorem]{Definition}
\newtheorem{proposition}[theorem]{Proposition}
\newtheorem{oquestion}[theorem]{Open question}
\theoremstyle{remark}
\newtheorem{remark}[theorem]{Remark}
\renewcommand{\Im}{\operatorname{Im}}
\renewcommand{\Re}{\operatorname{Re}}
\DeclareMathOperator{\osc}{\mathrm{osc}}
\newcommand{\G}{\mathcal{G}}
\newcommand{\rI}{\mathrm{I}}
\newcommand{\rD}{\mathrm{D}}
\newcommand{\rS}{\mathrm{S}}
\DeclareMathOperator{\var}{Var}
\def\LipKd{{\mbox{\textsc{Lip(}$\kappa$\textsc{,}$\delta$\textsc{)}}}}
\def\ExpFat{{{\mbox{\textsc{Exp-Fat(}$\delta$\textsc{)}}}}}
\def\cD{\mathcal{D}}
\def\cF{\mathcal{F}}
\def\rH{\mathrm{H}}
\title[Bipartite dimer model and Lorentz-minimal surfaces]{Bipartite dimer model: perfect t-embeddings and lorentz-minimal surfaces}
\author[Dmitry Chelkak]{Dmitry Chelkak$^\mathrm{a,b}$}
\author[Beno\^{\i}t Laslier]{Beno\^{\i}t Laslier$^\mathrm{c}$}
\author[Marianna Russkikh]{Marianna Russkikh$^\mathrm{d}$}
\thanks{\textsc{${}^\mathrm{A}$ D\'epartement de math\'ematiques et applications, \'Ecole Normale Sup\'erieure, CNRS, PSL University, 45 rue d'Ulm, 75005 Paris, France.}}
\thanks{\textsc{${}^\mathrm{B}$ On leave from St.~Petersburg Dept. of Steklov Mathematical Institute RAS, Fontanka 27, 191023 St.~Petersburg, Russia.}}
\thanks{\textsc{${}^\mathrm{C}$ Universit\'e de Paris, Sorbonne Universit\'e, CNRS, Laboratoire de Probabilit\'es, Statistiques et Mod\'elisations (LPSM), Paris, France}}
\thanks{\textsc{${}^\mathrm{D}$ Massachusetts Institute of Technology, Department of Mathematics, 77 Massachusetts Avenue, Cambridge, Massachusetts, 02139–4307}}
\thanks{\texttt{dmitry.chelkak@ens.fr}, \texttt{laslier@math.univ-paris-diderot.fr}, \texttt{russkikh@mit.edu}}
\begin{document}

\begin{abstract} This is the second paper in the series devoted to the study of the dimer model on t-embeddings of planar bipartite graphs. We introduce the notion of perfect t-embeddings and assume that the graphs of the associated origami maps converge to a Lorentz-minimal surface $\mathrm{S}_\xi$ as~$\delta\to 0$. In this setup we prove (under very mild technical assumptions) that the gradients of the {height correlation} functions converge to those of the Gaussian Free Field defined in the intrinsic metric of the surface~$\rS_\xi$. We~also formulate several open questions motivated by our work.
\end{abstract}

\keywords{dimer model, t-holomorphicity, Lorentz-minimal surfaces}

\subjclass[2010]{82B20, 30G25, 53A10}

\maketitle

\tableofcontents

\newpage

\section{Introduction}

\subsection{General context and basic definitions}
 Let~$\G$ be a (big) weighted bipartite graph with the topology of the sphere; we call the two bipartite classes of its vertices black and white (and use the notation~$V(\G)=B\cup W$) and denote the weight of an edge~$e$ by~$\chi_e>0$. The dimer model is a random choice of a dimer cover~$\cD$ of~$\G$ (or, equivalently, of a perfect matching, i.e., a collection of edges that cover all vertices of~$\G$ exactly once), with probability proportional to~$\prod_{e\in\cD}\chi_e$. Below we always assume that~$\G$ admits a dimer cover even after removing two arbitrary vertices~$b\in B$ and~$w\in W$ from~it. {The planar dimer model has received a lot of attention, especially since Kasteleyn showed in 1960s that its partition function is given by the Pfaffian of a signed adjacency matrix of the underlying graph, which even simplifies in the bipartite case to the determinant of the so-called Kasteleyn matrix~$K_\R:\R^W\to\R^B$ with entries~$K(b,w)=\pm\chi_{(bw)}$ and appropriately chosen signs.} In particular, the planar dimer model is known to be intrinsically connected with a wide range of subjects varying from cluster algebras and representation theory to complex analysis and two-dimensional field theories, obviously not bypassing probability and various branches of combinatorics; we refer an interested reader to~\cite{kenyon-lectures,gorin-lectures} and references therein for more information.

 One of the central objects in the study of the planar bipartite dimer model is the so-called Thurston's \emph{height function,} which is defined as follows: choose a reference dimer cover~$\cD_0$ of~$\G$, superimpose it with~$\cD$, and note that all vertices of~$\G$ become covered by double-edges and cycles. This picture can be viewed as a topographic map and defines a function~$h:V(\G^*)\to\mathbb{Z}$ as follows: if~$(bw)$ is an edge of~$\G$ and~$(vv')=(bw)^*$ is the dual edge of~$\G^*$ oriented so that~$b$ is on the right, then \mbox{$h(v)-h(v')=\mathbbm{1}[(bw)\in\cD]-\mathbbm{1}[(bw)\in\cD_0]$}. In other words,~$h$ changes by~$\pm 1$ each time~$(vv')$ intersects one of the cycles constituting~$\cD\cup\cD_0$, {with the sign depending} on whether $(vv')^*\in\cD$ or $(vv')^*\in\cD_0$ and whether~$(vv')$ goes inside or outside this cycle. Thus, $h$~is a random function {defined} on vertices of the dual graph~$\G^*$, which depends on the choice of {the reference dimer cover}~$\cD_0$. However, it is easy to see that the fluctuations \mbox{$\hbar(v):=h(v)-\mathbb{E}[h(v)]$} do \emph{not} depend on the choice of~$\cD_0$. Given $v_1,\ldots,v_n\in V(\G^*)$ we set
 \begin{equation}\label{eq:Hn-def}
 H_n(v_1,\ldots,v_n)\ :=\ \mathbb{E}[\hbar(v_1)\ldots \hbar(v_n)]
 \end{equation}
 and call it the $n$-point {height} correlation function of the bipartite dimer model; note that~$H_1\equiv 0$ {because of the centering in the definition.}

 Since {the} seminal works of Kenyon~\cite{kenyon-gff-a,kenyon-gff-b} and Kenyon, Okounkov and Sheffield~\cite{kenyon-okounkov,kenyon-okounkov-sheffield} it is either rigorously known or predicted that in many setups the fluctuations~$\hbar$ of the height function become Gaussian in the so-called scaling limit~$\delta\to 0$ when a sequence of subgraphs~$\G^\delta$ of {a} periodic grid (e.g., square or honeycomb) of mesh size~$\delta$ approximate a given planar domain~$\Omega$; {e.g., see~\cite{BLR1,bufetov-gorin,chhita-johansson,gorin-lectures,kenyon-honeycomb,kenyon-lectures,laslier-21,zhongyang-li,petrov,russkikh-t,russkikh-h} and references therein.} It is worth emphasizing that the identification of the two-point correlation is {often} non-trivial: it should be thought of as the Green function of the Laplacian (with Dirichlet boundary conditions) in a certain non-trivial metric in~$\Omega$, which depends on the concrete setup; see~\cite{kenyon-okounkov,kenyon-okounkov-sheffield} or~\cite{kenyon-lectures,gorin-lectures} for details. Thus, 
 two questions arise: (a) why does the Gaussian structure appear in the limit and (b) how can one describe the relevant conformal structure of fluctuations in an `invariant' manner; i.e., not relying upon a concrete structure of the refining grids and, even more {ambitiously,} on their periodicity. This paper contributes to the study of these two questions by means of \emph{discrete complex analysis} techniques on \emph{t-embeddings} of planar bipartite graphs.

 The framework of t-embeddings or, equivalently, that of \emph{Coulomb gauges}, appeared recently in~\cite{KLRR} and in~\cite{CLR1}; the former paper mostly focuses on algebraic aspects of the dimer model while the latter is devoted to the study of the so-called t-holomorphic functions on t-embeddings, which can be thought of as a unifying approach to discrete holomorphic functions on regular grids, discrete harmonic functions on Tutte's barycentric embeddings and their gradients, and \mbox{s-holomorphic functions} on s-embeddings; see~\cite[Appendix]{CLR1}. In this paper we add two new ideas to the framework developed in~\cite{CLR1}:
 \begin{itemize}
 \item we introduce the notion of \emph{perfect} t-embeddings of finite planar bipartite graphs carrying the dimer model (or, equivalently, the notion of perfect Coulomb gauges in the terminology of~\cite{KLRR}; see Section~\ref{sub:finding-gauge});
 \item we interpret scaling limits of t-holomorphic functions using the conformal structure of (the limit of) the graphs of the associated origami maps viewed as surfaces in the Minkowski space~$\R^{2,2}$; see also~\cite[Section~2.7]{chelkak-s-emb} for a similar discussion in the planar Ising model context.
 \end{itemize}

 The first idea, a bit surprisingly, provides a setup in which one can prove the uniform boundedness of the so-called dimer coupling functions using only mild `non-degeneracy' assumptions~{\LipKd} and~{\ExpFat} that are discussed below. This uniform estimate of the coupling function allows us to apply the framework developed in~\cite{CLR1}.

 The second idea naturally (though rather unexpectedly) leads to the appearance of \emph{Lorentz-minimal surfaces} in the dimer model context; see also~\cite{chelkak-ramassamy} and~\cite{russkikh-cusp} for concrete examples. Though in this paper we take it as one of the `black box' assumptions (see Assumption~\ref{assump:Lorentz-min} below), we believe that this appearance should be a general phenomenon rather than a miraculous coincidence and support this viewpoint by an informal discussion given in Section~\ref{sub:questions}. From our perspective, developing this link between the bipartite dimer model and surfaces in the Minkowski space~$\R^{2,2}$ is an interesting research direction and we hope that Theorem~\ref{thm:main-GFF} given below is only the first step in understanding a bigger picture; see Section~\ref{sub:questions} for further comments.

\subsection{Perfect t-embeddings and assumptions in the main theorem}
\subsubsection{Definition of perfect t-embeddings} We now discuss a central definition of our paper, see Fig.~\ref{fig:p-emb} and Section~\ref{sub:origami} for details. Let~$(G,\chi)$ be a weighted bipartite graph with the topology of the sphere and a marked  face~$v_{\mathrm{out}}$. Recall that a collection of weights~$\widetilde{\chi}_e$ is called gauge equivalent to~$\chi_e$ if there exists two functions~$g^\tb:B\to\R_+$ and~$g^\tw:W\to\R_+$ (called gauge functions) such that~$\widetilde{\chi}_{(bw)}=g(b)\chi_{(bw)}g(w)$ for all edges~$(bw)$ of~$\G$. Replacing edge weights~$\chi_e$ by~$\widetilde{\chi}_e$ one does not change the law  of the dimer model on~$\G$ since the weight of each configuration is multiplied by the same factor. Denote by~$\G^*$ the augmentation of its dual graph at~$v_\mathrm{out}$, which means that~$\G^*$ contains a cycle of degree~$\deg v_\mathrm{out}$ replacing~$v_\mathrm{out}$ itself. We call~$\cT:\G^*\to\C$ a \emph{perfect t-embedding} of (the dual of) the weighted bipartite graph~$(\G,\chi)$ if

\begin{figure}
\includegraphics[height=0.7\textwidth]{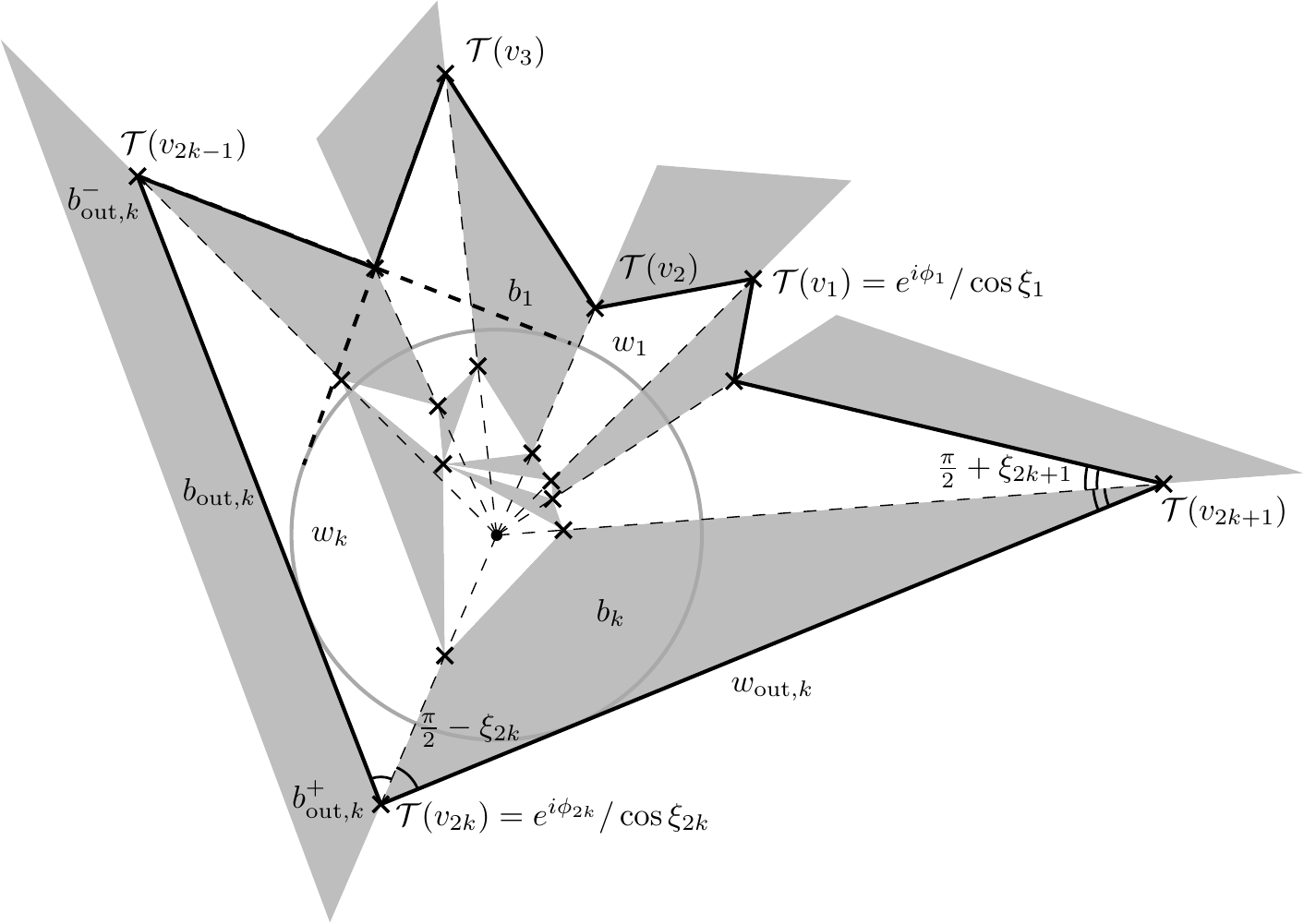}\hfill$\phantom{x}$

\vskip -0.15\textwidth

\includegraphics[clip, trim= 4.5cm 17.2cm 3.6cm 4.7cm, width=0.56\textwidth]{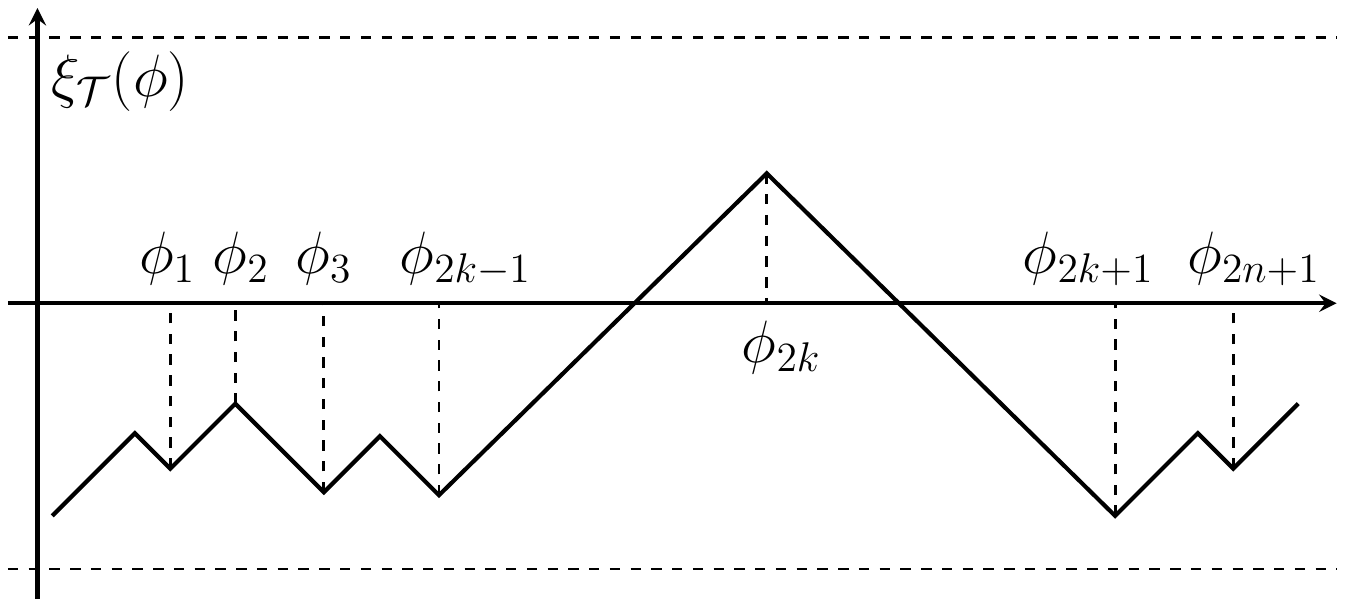}\hfill
\includegraphics[clip, trim= 4cm 21.4cm 11.6cm 2.4cm, width=0.36\textwidth]{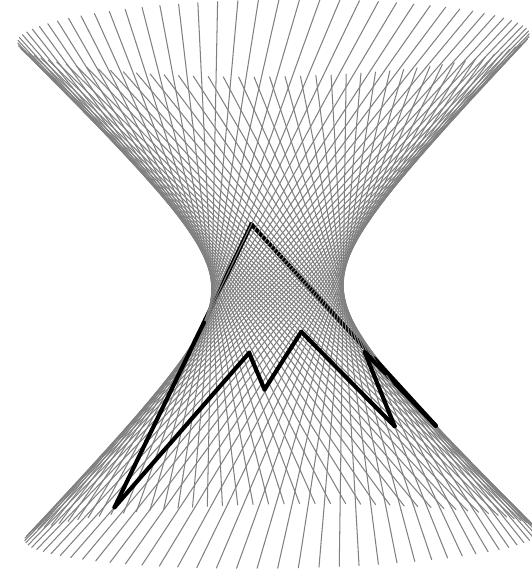}

\smallskip

\caption{\textsc{Top:} an example of a perfect t-embedding~${\cT}$ and the notation used throughout the paper; the dimer graph $\G$ is the octagonal prism and~$\deg v_\mathrm{out}=8$. \textsc{Bottom-left:} the graph of the corresponding piece-wise linear function~$\xi_\cT$. \textsc{Bottom-right:} the lift~{$\partial\rS_{\xi_\cT}\subset\R^{2,1}$} of the boundary of the domain~$\Omega_{\xi_\cT}{\subset \C\cong\R^2}$ onto the one-sheet hyperboloid~\eqref{eq:hyperboloid-def}.}
\label{fig:p-emb}
\end{figure}

\begin{itemize}
\item~$\cT$ is a \emph{proper} embedding, i.e., all edges of~$\cT(\G^*)$ are non-intersecting straight segments and all faces of~$\cT(\G^*)$ except the outer one are non-overlapping convex polygons;
\item the edge lengths~$|\cT(v')-\cT(v)|$ are \emph{gauge equivalent} to the original dimer weights~$\chi_{(bw)}$, where~$(vv')=(bw)^*$ denotes the edge of~$\G^*$ that is dual to the edges~$(bw)$ of the dimer graph~$\G=B\cup W$;
\item for each inner vertex~$v$ of~$\G^*$ the sum of angles of black faces adjacent to~$\cT(v)$ equals~$\pi$; following~\cite{KLRR,CLR1} we call this the \emph{angle condition};
\item the outer face of~$\cT$ is a (possibly, non-convex) \emph{tangential polygon} to the unit circle and all the non-boundary edges emanating from boundary vertices are \emph{bisectors} of the corresponding angles; see Fig.~\ref{fig:p-emb}.
\end{itemize}
The first three conditions already appeared in~\cite{KLRR} and~\cite{CLR1} {and define a generic t-embedding of finite or infinite graphs} while the {fourth} one is specific to this paper {and distinguishes \emph{perfect} t-embeddings in the finite case.}
In the terminology of~\cite{KLRR} we call the corresponding gauge functions a~\emph{perfect Coulomb gauge}. 
Let us emphasize that, unfortunately, we do not have a general result claiming the \emph{existence} of such gauges. However, we believe that they do exist for all sufficiently non-degenerate graphs~$\G$. For instance, it follows from~\cite{KLRR} (or from Theorem~\ref{thm:perfect-gauge} given below) that perfect Coulomb gauges always exist in the simplest case when~$\deg v_\mathrm{out}=4$; {see Section~\ref{sec:discussion} for a discussion.}

An important concept introduced in~\cite{KLRR} and~\cite{CLR1} is the notion of the \emph{origami map}~$\cO:\G^*\to\C$ associated with a (not necessarily perfect) t-embedding~$\cT$. We postpone its formal definition until Section~\ref{sub:origami} and only give an informal description here: to construct the mapping~$z\mapsto\cO(z)$ one folds the plane along each of the edges of~$\cT(\G^*)$; the complex coordinate of a given point $z\in\C$ after the folding is~$\cO(z)$. (This folding procedure is locally consistent due to the angle condition required at all inner vertices of~$\G^*$.) Note that the map~$z\mapsto\cO(z)$ cannot increase distances, i.e., is~$1$-Lipschitz.

It is easy to see that if~$\cT$ is a \emph{perfect} t-embedding, then~$\cO$ maps the outer face of~$\G^*$ onto a line; without loss of generality we assume that this is the real line in what follows. Moreover, the image of the outer faces under the mapping~$(\cT,\cO):\G^*\to\C\times\C\cong \R^{2,2}$ belongs to the one-sheet hyperboloid
\begin{equation}
\label{eq:hyperboloid-def}
\rH\ :=\ \{\,(z,\vartheta)\in\C\times\C\cong\R^{2,2}:\ |z|^2-|\vartheta|^2=1,\ \Im\vartheta=0\,\}\ \subset\ \R^{2,1},
\end{equation}
where we use the notation~$\R^{2,2}$ and~$\R^{2,1}:=\{(z,\vartheta):\Im\vartheta=0\}$ to emphasize that we equip these real vector spaces with the \emph{Minkowski} scalar product
\begin{equation}
\label{eq:Lorentz-product}
\langle(z_1,\vartheta_1),(z_2,\vartheta_2)\rangle\ :=\ \Re[z_1\overline{z_2}]-\Re[\vartheta_1\overline{\vartheta}_2]
\end{equation}
rather than with the usual Euclidean structure of~$\R^4$ or~$\R^3$, respectively.

\subsubsection{Domains~$\Omega_\xi$ and Lorentz-minimal surfaces~$\rS_\xi$} Given a~$1$-Lipschitz function~$\xi:\R/2\pi\mathbb{Z}\to (-\frac\pi 2,\frac\pi 2)$ we define a planar star-convex domain~$\Omega_\xi$ by
\begin{equation}
\label{eq:omega-xi-def}
\Omega_\xi :=\ \big\{z=\rho e^{i\phi},\ 0\le\rho< 1/\cos(\xi(\phi)),\ \phi\in\R/2\pi\mathbb{Z}\big\}\ \subset\ \C.
\end{equation}
It is easy to see that the image of a perfect t-embedding~$\cT$ in the complex plane is given by such a domain~$\Omega_\xi$, where~$\xi=\xi_\cT$ is a piece-wise linear function with slopes~$\pm 1$; see Fig.~\ref{fig:p-emb}. In our main result, Theorem~\ref{thm:main-GFF} given below, we consider a sequence of perfect t-embeddings~$\cT^\delta$ (of a sequence of growing bipartite graphs~$\G^\delta$ indexed by $\delta=\delta_m\to 0$) and assume that
\[
\xi^\delta\rightrightarrows\xi:\R/2\pi\mathbb{Z}\to (-\tfrac\pi 2,\tfrac\pi 2)\ \ \text{as}\ \ \delta\to 0,
\]
here and below~$`\rightrightarrows$' denotes the uniform convergence; note that the limit~$\xi$ is necessarily a $1$-Lipschitz function. Since the origami maps \mbox{$\cO^\delta:\Omega_{\xi^\delta}\to\C$} are $1$-Lipschitz, without loss of generality we can also assume that
\begin{equation}\label{eq:origami-conv}
\cO^\delta\rightrightarrows \vartheta:\overline{\Omega}_\xi\to\C\ \ \text{as}\ \ \delta\to 0,\ \ \text{uniformly on compact subsets of~$\Omega_\xi$},
\end{equation}
where the limit~$\vartheta$ is a~$1$-Lipschitz function on~$\Omega_\xi$ such that~$\vartheta(\partial\Omega_\xi)\subset\R$. We are now able to formulate the key assumption in Theorem~\ref{thm:main-GFF}.

\begin{figure}[t]
\centering \includegraphics[clip, trim=4.5cm 7.6cm 1cm 4cm, width=0.44\textwidth]{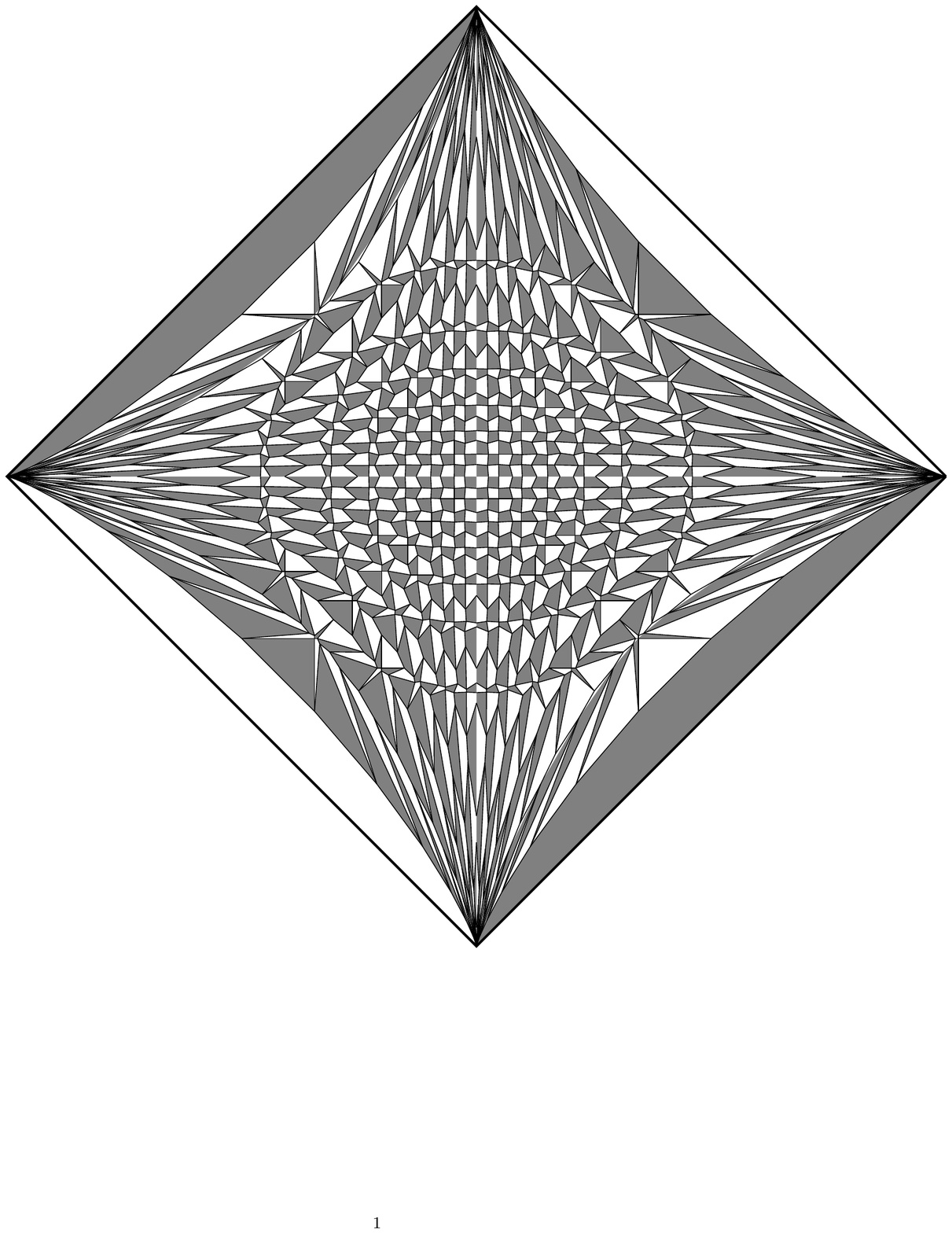}\hskip 0.06\textwidth
\includegraphics[clip, trim=4.8cm 22.8cm 11cm 2cm, width=0.48\textwidth]{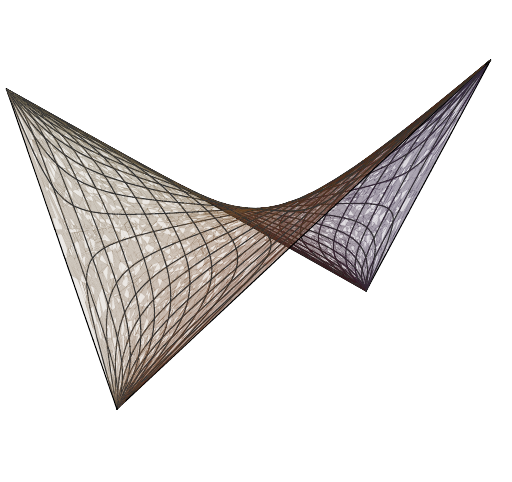}
\caption{\textsc{Left:} a perfect t-embedding of the homogeneous \emph{Aztec diamond}~$A_m$ of size $m=27$. The function~$\xi=\xi_{\cT_m}$ is given by~$\xi(\pi k)=-\frac\pi 4$ and~$\xi(\frac\pi 2+\pi k)=\frac\pi 4$ for all~$m$; see~\cite{chelkak-ramassamy}. \textsc{Right:} the Lorentz-minimal surface~${S_\xi\subset\Omega_\xi\times\R\subset \R^{2,1}}$ appearing in the limit~$m\to\infty$. Note that the paper~\cite{chelkak-ramassamy} does not contain neither a proof of the assumption~{\ExpFat} for these embeddings nor a discussion of {a} choice~$\delta=\delta_m$ that fits the assumption~$\LipKd$. {We consider~\cite{chelkak-ramassamy} more as a motivation of Assumption~\ref{assump:Lorentz-min} rather than as a particular case to which Theorem~\ref{thm:main-GFF} applies literally though we believe that} {one could check the technicalities using the results of~\cite{chhita-johansson-young}.}}\label{fig:Aztec}
\end{figure}

\begin{assumption}[{\bf Lorentz-minimality}] \label{assump:Lorentz-min}
The graph~$(z,\vartheta(z))_{z\in\Omega_\xi}$ of~$\vartheta$ is the Lorentz-minimal surface~$\rS_\xi$ solving the Plateau problem in the Minkowski space~$\R^{2,1}\subset \R^{2,2}$ for the boundary contour
\[
{\partial\rS_\xi\ :=\ \big\{(\cos\phi/\cos\xi(\phi),\sin\phi/\cos\xi(\phi),\tan\xi(\phi)),\ \phi\in\R/2\pi\mathbb{Z}\,\big\}\subset\R^{2,1}.}
\]
(In~particular, note that we require~$\Im\vartheta(z)=0$ for all~$z\in\Omega_\xi$.)
\end{assumption}
Below we give a list of properties of~$\vartheta$ that we will use in our paper. The reader can consider them as an equivalent reformulation of Assumption~\ref{assump:Lorentz-min}.
\begin{itemize}
\item The function $\vartheta$ is~$C^\infty$-smooth inside~$\Omega_\xi$ and~$\max_K|\nabla\vartheta|<1$ for all compact subsets~$K$ of~$\Omega_\xi$. In particular, $\rS_\xi$ is a space-like surface in the Minkowski space~$\R^{2,1}$ and the Riemannian metric on~$\rS_\xi$ inherited from~$\R^{2,1}$ is uniformly non-degenerate on compact subsets.
\item There exists a parametrization~$\zeta\in\mathbb{D}\to(z(\zeta),\vartheta(\zeta))\in\rS_\xi$ of the surface $\rS_\xi$ by the unit disc~$\mathbb{D}:=\{\zeta\in\C:|\zeta|<1\}$ which is simultaneously conformal and harmonic, i.e., for all~$\zeta\in\mathbb{D}$ one has
\begin{equation}
\label{eq:conf-param}
\pazeta z \cdot \pazeta\overline{z}=\pazeta\vartheta \cdot \pazeta\overline{\vartheta}\qquad\text{and}\qquad
\opazeta\pazeta z=\opazeta\pazeta\vartheta=0.
\end{equation}
(The first equation easily follows from the expression~\eqref{eq:Lorentz-product} for the scalar product in the Minkowski space~$\R^{2,2}$.) Note that in the actual setup of Theorem~\ref{thm:main-GFF} the function~$\vartheta$ is real-valued.
\item The mapping~$\zeta\mapsto z(\zeta)$ is a homeomorphism of~$\mathbb{D}$ and~$\Omega_\xi$ and one has $\zeta\to \partial\mathbb{D}$ if and only if~$z(\zeta)\to\partial\Omega_\xi$; note that this mapping is, in general (e.g., for polygonal domains~$\Omega_\xi$), \emph{not} continuous up to~$\partial\mathbb{D}$.
\end{itemize}

{Recall that while it looks rather unexpected, the Lorentz-minimality property of the limits~\eqref{eq:origami-conv} was recently observed in concrete examples~\cite{chelkak-ramassamy,russkikh-cusp} and should be a general phenomenon as we discuss in Section~\ref{sub:questions}.}

\subsubsection{Assumptions on possible degeneracies in perfect t-embeddings~$\cT^\delta$} We now move on to `technical' assumptions~{\LipKd} and~{\ExpFat} on (perfect) t-embeddings~$\cT^\delta$, which are adopted from the first paper~\cite{CLR1} in this series. Let us emphasize that we do \emph{not} rely on any kind of `uniformly bounded angles and/or lengths' properties of~$\cT^\delta$ and replace them by the assumption~{\ExpFat} formulated below. This assumption only requires that there are no macroscopic continua in~$\Omega_\xi$ filled by `exponentially flat' faces of~$\cT^\delta$; which looks plausible in many interesting setups, including those involving random maps. Moreover, in Theorem~\ref{thm:main-GFF} we only impose these assumptions on \emph{compact subsets} $K\subset \Omega_\xi$, allowing~$\cT^\delta$ to behave in an arbitrarily bad way near the boundary.

\begin{assumption}[\textbf{\LipKd}] \label{assump:LipKd}
Given $0<\kappa<1$ and $\delta > 0$ we say that a t-embedding $\cT=\cT^\delta$ satisfies assumption~{\LipKd} on a set $K\subset \Omega_{\cT}$ if
\[
|\cO(z')-\cO(z)|\ \le\ \kappa\cdot|z'-z|\quad \text{for all $z,z'\in K$ such that $|z-z'|\ge\delta$},
\]
where~$\cO$ stands for the origami map associated with~$\cT$.
\end{assumption}
It is worth noting that this assumption rather \emph{defines} a `nice' scale~$\delta=\delta_m$ for a given t-embedding~$\cT=\cT_m$ than takes it as an input, this is why we omit the superscript~$\delta$ from the notation. In particular, it directly follows from the definition of the origami map~$\cO$ that the maximal diameter of faces of~$\cT$ lying inside~$K$ cannot exceed~$\delta$; let us emphasize that~$\delta$ can be much bigger than this maximal diameter. Recall that in this paper we consider a sequence of t-embeddings such that the limit~\eqref{eq:origami-conv} of the origami maps satisfies the condition~$\max_K|\nabla\vartheta|<1$ on all compacts~$K\subset\Omega_\xi$. This implies that the assumption~{\LipKd} always holds (on compact subsets of~$\Omega_\xi$) for a certain \emph{choice} of~$\delta$ associated to these t-embeddings. However, the forthcoming assumption~{\ExpFat} requires to fix this dependence of~$\delta$ and~$\cT=\cT^\delta$; this is why one cannot simply drop~{\LipKd} in Theorem~\ref{thm:main-GFF}.

Given~$\rho>0$, we say that a polygon is `$\rho$-fat' if it contains a disc of radius~$\rho$. If a t-embedding~$\cT$ has non-triangular faces, then we say that~$\cT^\tb_\spl$ (resp.~$\cT^\tw_\spl$) is a splitting of~$\cT$ if it is obtained by splitting all black (resp., white) faces of~$\cT$ into triangles. The following assumption originated in~\cite[Section~6.5]{CLR1}.

\begin{assumption}[\textbf{\ExpFat}] \label{assump:ExpFat-general}
We say that a sequence of \mbox{t-embeddings} $\cT^\delta=\cT_{\xi^\delta}$ satisfy the assumption~{\ExpFat} as~$\delta\to 0$ on a common region~$K\subset\Omega_\xi$ if the following properties hold for each~$\beta>0$:\\[2pt]
(i) there exists splitting~$\cT^{\tb,\delta}_\spl$ such that, if one removes all `$\exp(-\beta\delta^{-1})$-fat' white faces and all `$\exp(-\beta\delta^{-1})$-fat' black triangles from $\cT_\spl^{\tb,\delta}$, then the size of {any} remaining (in~$K$) vertex-connected components tends to zero as~$\delta\to 0$;\\[2pt]
(ii) there exists splittings~$\cT^{\tw,\delta}_\spl$ satisfying the same condition.
\end{assumption}
Let us emphasize that, contrary to the assumption~{\LipKd} which can be used to define the scales~$\delta=\delta_m\to 0$ for a given sequence of perfect \mbox{t-embeddings} $\cT_m$ and a given compact~$K\subset\Omega_\xi$, the assumption~{\ExpFat} requires that this dependence is fixed in advance.

\subsection{Main result and organization of the paper} We are now ready to formulate the main result of this paper. Informally speaking, we prove that in the setup described above the gradients of $n$-point correlation functions~\eqref{eq:Hn-def} converge to those of the standard Gaussian Free Field in the intrinsic metric of the Lorentz-minimal surface~$\rS_\xi$ that was introduced in Assumption~\ref{assump:Lorentz-min}.

\begin{theorem}\label{thm:main-GFF} Let~$\cT^\delta$, $\delta=\delta_m\to 0$ be a sequence of perfect t-embeddings of weighted bipartite graphs~$\G_m$, $m\to\infty$. Assume that these embeddings satisfy Assumptions~\ref{assump:Lorentz-min}--\ref{assump:ExpFat-general} discussed above. Let~$(v_{1,1},\ldots,v_{n,1})$ and~$(v_{1,2},\ldots,v_{n,2})$ be two $n$-tuples of pairwise distinct inner points of the domain~$\Omega_\xi$ and assume that~$v_{k,r}=z(\zeta_{k,r})$, $\zeta_{k,r}\in\mathbb{D}$, under the conformal parametrization of the Lorentz-minimal surface~$\rS_\xi\subset\R^{2,1}\subset\R^{2,2}$. Let~$v_{k,r}^\delta$ be an approximation of $v_{k,r}$ on the t-embedding~$\cT^\delta$ and {recall the definition~\eqref{eq:Hn-def} of the height correlation functions~$H_n=H_n^\delta$ of the bipartite dimer model on~$\cT^\delta$.} Then,
\begin{align*}
&\textstyle \sum_{r_1,\ldots,r_n\in\{1,2\}}(-1)^{r_1+\ldots r_n}\,H_n^\delta(v^\delta_{1,r_1},\ldots,v^\delta_{n,r_n})\\[4pt]
&\qquad\qquad  \to\ \textstyle \sum_{r_1,\ldots,r_n\in\{1,2\}}(-1)^{r_1+\ldots r_n}\,G_{\mathbb{D},n}(\zeta_{1,r_1},\ldots,\zeta_{n,r_n})
\end{align*}
as~$\delta\to 0$, where~$G_{\mathbb{D},n}$ denotes the $n$-point correlation function of the Gaussian Free Field in the unit disc~$\mathbb{D}$ with Dirichlet boundary conditions. The convergence is uniform provided that the points~$v_{k,1}$ stay at a definite distance from each other and from $\partial\Omega_\xi$ and that the same is true for the points~$v_{k,2}$.
\end{theorem}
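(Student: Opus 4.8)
The plan is to follow the classical strategy of Kenyon, expressing height correlations through the dimer coupling function $K^{-1}$ and then passing to the limit using the t-holomorphicity framework of~\cite{CLR1}. First I would rewrite the alternating sum on the left-hand side as a single expectation. Since $H_n(w_1,\ldots,w_n)=\mathbb{E}[\hbar(w_1)\cdots\hbar(w_n)]$ is multilinear in its slots, the sum over $r_1,\ldots,r_n\in\{1,2\}$ collapses to
\[
\mathbb{E}\Big[\,\textstyle\prod_{k=1}^{n}\big(\hbar(v_{k,2}^\delta)-\hbar(v_{k,1}^\delta)\big)\Big].
\]
Each factor $\hbar(v_{k,2}^\delta)-\hbar(v_{k,1}^\delta)$ is the centered increment of the height function between two faces of $\G_m$, hence equals a signed sum of centered dimer occupations $\mathbbm{1}[\mathfrak{e}\in\cD]-\mathbb{P}[\mathfrak{e}\in\cD]$ over the edges $\mathfrak{e}$ crossed by a fixed dual path $\gamma_k^\delta$ joining $v_{k,1}^\delta$ to $v_{k,2}^\delta$. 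Expanding the product reduces the whole left-hand side to a finite sum, over choices of one edge per path, of \emph{centered} multi-edge dimer correlations.

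Next I would invoke the determinantal structure of the dimer model. By Kenyon's formula the joint occupation probability of distinct edges $(b_iw_i)$ equals $\big(\prod_i K(b_i,w_i)\big)\det[K^{-1}(w_i,b_j)]_{i,j}$, and after centering the resulting cumulant expansion is organized by fixed-point-free permutations: the connected $p$-point contribution is a cyclic product $\prod_i K(b_i,w_i)K^{-1}(w_i,b_{\sigma(i)})$ running over a $p$-cycle $\sigma$. The building block is thus the coupling function $K^{-1}(w,b)$, and the covariance of two centered increments is governed by the product of two such couplings. The entire problem is therefore reduced to the asymptotics of $K^{-1}$ on the perfect t-embeddings $\cT^\delta$.

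The heart of the argument, and the step I expect to be the main obstacle, is establishing these asymptotics. Here the perfect t-embedding structure is decisive: as advertised in the introduction, the boundary conditions built into a perfect t-embedding (the outer tangential polygon and the bisector condition) are exactly what is needed to prove the \emph{uniform boundedness} of $K^{-1}$ on compact subsets $K\subset\Omega_\xi$, using only the mild non-degeneracy Assumptions~\ref{assump:LipKd} and~\ref{assump:ExpFat-general}. With this a priori bound in hand, $K^{-1}(\cdot,b)$ is a t-holomorphic function to which the compactness and regularity theory of~\cite{CLR1} applies, yielding subsequential convergence to a continuous limit that is holomorphic with respect to the complex structure of the origami surface. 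The crucial point is to \emph{identify} this limit. Assumption~\ref{assump:Lorentz-min} guarantees that this complex structure is precisely the one pulled back from the conformal parametrization $\zeta\mapsto z(\zeta)$ of the Lorentz-minimal surface $\rS_\xi$ by the disc $\mathbb{D}$; rewriting everything in the coordinate $\zeta$ turns the limiting coupling function into the standard holomorphic Cauchy kernel of $\mathbb{D}$, with the universal normalization fixed by the short-distance singularity of $K^{-1}$ together with the angle condition at inner vertices. Since the limit is thereby pinned down independently of the subsequence, the convergence is genuine.

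Finally I would assemble the Gaussian limit. Summing the limiting products of coupling functions over the edges of the paths $\gamma_k^\delta$ turns the discrete edge-sums into contour integrals of the Cauchy kernel, and integrating the kernel along paths produces differences of the disc Green's function; this is exactly the gradient-of-GFF structure encoded by the alternating sum of $G_{\mathbb{D},n}$. It remains to show that only the two-cycle (pairing) contributions survive: the cumulants of order $p\ge 3$, being cyclic products of $p\ge 3$ coupling functions summed along macroscopically separated paths, converge to zero after the path-summations, so that the limiting correlations obey Wick's theorem and reproduce $\sum_{r_1,\ldots,r_n}(-1)^{r_1+\cdots+r_n}G_{\mathbb{D},n}$. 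Uniformity under the stated separation hypotheses follows from the uniformity of the coupling-function asymptotics on the relevant compact sets. The delicate points throughout are the uniform boundedness of $K^{-1}$ and the identification of its limit, both of which rest on the perfect t-embedding structure and on Assumption~\ref{assump:Lorentz-min}.
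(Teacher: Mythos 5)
Your overall skeleton---telescoping the alternating sum into $\mathbb{E}\big[\prod_{k=1}^n(\hbar(v^\delta_{k,2})-\hbar(v^\delta_{k,1}))\big]$, expanding via Kenyon's determinantal formula organized by fixed-point-free permutations, and reducing everything to the asymptotics of $K^{-1}_{\cT^\delta}$---matches the paper, and you correctly single out the uniform boundedness of $K^{-1}$ (coming from the perfect-boundary structure together with {\LipKd} and {\ExpFat}) as the key technical input. The gap is in your identification step. You claim that the subsequential limit of $K^{-1}$, rewritten in the conformal coordinate $\zeta$ of $\rS_\xi$, becomes ``the standard holomorphic Cauchy kernel of $\mathbb{D}$, with the universal normalization fixed by the short-distance singularity,'' so that ``the limit is thereby pinned down independently of the subsequence.'' A meromorphic function on $\mathbb{D}$ with a prescribed simple pole is determined only up to addition of a holomorphic function; to pin down that additive part you need boundary conditions for the limit of $K^{-1}(w,b)$ in one variable while the other is held in the bulk, which requires uniform control of $K^{-1}$ when \emph{both} arguments are allowed to approach $\partial\Omega_\xi$. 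The paper's key estimate (Theorem~\ref{thm:K-1=O(1)}) only gives $K^{-1}=O(1)$ when \emph{at least one} of the two points stays at a definite distance from the boundary, and the authors state explicitly (Remark after Theorem~\ref{thm:main-GFF}) that they do \emph{not} identify the limit of the coupling function and are not aware of an argument that would.

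The paper circumvents this by never identifying $K^{-1}$ itself: it identifies only the limits of the differential forms $\mathcal{A}_n$ built from products of the complexified coupling functions $\FF{\pm\pm}$, whose discrete primitives vanish identically on the boundary of $\cT^\delta$ (Proposition~\ref{prop:closed-forms-double}). Combining the residue computation of Proposition~\ref{prop:fpmpm} with the boundary estimate that \emph{is} available (one point near $\partial\Omega_\xi$, the other in the bulk) pins down $\mathcal{A}_2=\pi^{-1}d_{\zeta_1}d_{\zeta_2}G_{\mathbb{D}}$ and $\mathcal{A}_3=0$ as primitives that are harmonic in the last variable with Dirichlet boundary values and prescribed singularities; the case $n\ge 4$ then follows algebraically from the determinantal structure of $\mathcal{A}_n$. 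This is also why your treatment of the higher cumulants is incomplete: their vanishing is not a consequence of ``macroscopic separation of the paths'' but of the same Dirichlet-boundary-plus-no-singularity argument applied to $\mathcal{A}_3$. Note finally that this weaker route is the reason the theorem asserts convergence only of the \emph{gradients} of $H_n^\delta$: your stronger claim of a pinned-down coupling function would yield convergence of the functions $H_n^\delta$ themselves, which the paper explicitly does not obtain.
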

\begin{remark} It is worth noting that along the way we do prove that the correlation functions~$H_n^\delta$ are uniformly Lipschitz on compact subsets of~$\Omega_\xi$. Thus, the result of Theorem~\ref{thm:main-GFF} can be rigorously formulated as the convergence of their gradients~$\nabla_{v_1}\ldots\nabla_{v_n}H_n^\delta(v_1,\ldots,v_n)$. However, Theorem~\ref{thm:main-GFF} does \emph{not} imply a stronger statement on the convergence of functions~$H_n^\delta$ themselves to~$G_{\mathbb{D},n}$. For instance, a simple example that is \emph{not} excluded by our proof is the convergence of the height fluctuations~$\hbar^\delta$ to the sum of the GFF on~$\rS_\xi$ and an independent global random variable. The reason is that, unlike in previously known proofs, we do \emph{not} identify the limit, as~$\delta\to 0$, of dimer coupling functions~$K^{-1}_{\cT^\delta}(w^\delta,b^\delta)$ and rely upon a more robust framework developed in~\cite{CLR1} instead. Then, we manage to deduce a uniform estimate~$K^{-1}_{\cT^\delta}=O(1)$ from general principles but only in the situation when \emph{one} of the points~$w^\delta$, $b^\delta$ is allowed to approach the boundary of a perfect t-embedding~$\cT^\delta$ and not \emph{both} of them. Though we believe that this issue can be resolved by a more careful analysis, let us nevertheless say explicitly that we are not aware of such an argument at the moment.
\end{remark}
\begin{remark} Though the existence of perfect t-embeddings of big dimer graphs remains an open question, it is tempting to discuss a general phenomenology that Theorem~\ref{thm:main-GFF} suggests provided that a picture similar to Fig.~\ref{fig:Aztec} appears for other polygonal domains drawn on periodic grids. We expect that \emph{frozen regions} must be always collapsed to vertices of the polygonal domain~$\Omega_\xi$ since this is the only way how the (always non-degenerate) conformal structure on~$\rS_\xi$ can produce zero correlation functions. In a similar spirit, we believe that \emph{gaseous bubbles} must be collapsed to points inside~$\Omega_\xi$ and that the corresponding Lorentz-minimal surface~$\rS_\xi$ has a \emph{cusp} there. In particular, we do not think that Theorem~\ref{thm:main-GFF} can be applied literally in presence of gaseous bubbles though a good portion of its proof should survive: e.g., we do not essentially rely upon the fact that the limit~\eqref{eq:origami-conv} is real-valued in our analysis. We refer the reader to Section~\ref{sub:questions} for further comments.
\end{remark}

The rest of the  paper is organized as follows. We collect preliminary facts on t-embeddings and the so-called t-holomorphic functions in Section~\ref{sec:prelim}. The main reference for this material (except Section~\ref{sub:hol-on-S}) is our recent paper~\cite{CLR1}. We keep the presentation self-contained in what concerns the formulation of required statements; note that the most important input that we take from~\cite{CLR1} is the a priori regularity theory for t-holomorphic functions discussed in Section~\ref{sub:regularity}. Section~\ref{sec:main-proof} is the core part of the paper, in which we prove Theorem~\ref{thm:main-GFF}. More precisely, in Section~\ref{sub:boundedness} we prove the key uniform estimate for dimer coupling functions on perfect t-embeddings and then follow a scheme suggested in~\cite{CLR1} in order to complete the proof of Theorem~\ref{thm:main-GFF} in Section~\ref{sub:convergence}. As in Section~\ref{sec:prelim}, this proof is written in a self-contained manner and does not require {any prior} knowledge of~\cite{CLR1}. Section~\ref{sec:discussion} (and, especially, Section~\ref{sub:questions}) is mostly devoted to a discussion of a more general perspective. Coming back to the existence of perfect t-embeddings, in Section~\ref{sub:finding-gauge} we prove that this essentially amounts to finding a Coulomb gauge that sends all boundary vertices of~$\G^*$ onto the hyperboloid~\eqref{eq:hyperboloid-def}: the result is automatically a \emph{proper} embedding of~$\G^*$ (see Theorem~\ref{thm:perfect-gauge} for an exact statement). This reduces the existence question to solving a certain system of biquadratic equations in~$2\deg v_\mathrm{out}$ real variables and provides an additional support to the relevance of the setup developed in our paper.

\addtocontents{toc}{\protect\setcounter{tocdepth}{1}}
\subsection*{Acknowledgements} We would like to thank Mikhail Basok for many helpful comments on a preliminary version of this article.
D.\,C. is grateful to Olivier Biquard for an advice on the Lorentz geometry and to Richard Kenyon and Sanjay Ramassamy for inspiring discussions. M.\,R. would also like to thank Alexei Borodin for useful discussions.

D.\,C. is the holder of the ENS–MHI chair funded by~MHI. The research of D.\,C. and B.\,L. was partially supported by the ANR-18-CE40-0033 project DIMERS. The research of M.\,R. is supported by the Swiss NSF grants P400P2-194429 and P2GEP2-184555 and also partially supported by the NSF Grant DMS-1664619.
\addtocontents{toc}{\protect\setcounter{tocdepth}{2}}

\section{Preliminaries}\label{sec:prelim}

\subsection{Perfect t-embeddings and the origami map} \label{sub:origami}
We begin by briefly recalling the setup of \mbox{\emph{t-embeddings}} (see~\cite{CLR1}) or, equivalently, \emph{Coulomb gauges} (see~\cite{KLRR}) of finite bipartite graphs carrying the dimer model. Let $\G$ be a finite bipartite graph with the topology of the sphere and a marked vertex $v_{\operatorname{out}}$ of its dual graph~$\G^*$; we call the bipartite classes black and white, and use the notation~$B\cup W=V(\G)$ for them. The graph obtained from $\G^*$ by removing~$v_{\operatorname{out}}$ and replacing it by a cycle of length $\operatorname{deg} v_{\operatorname{out}}$ is called its {augmentation} at $v_{\operatorname{out}}$; by construction, all the new vertices replacing~$v_\mathrm{out}$ have degree~$3$. In the following, with a slight abuse of terminology we will use the symbol $\G^*$ and the name dual graph for the augmented dual at $v_\mathrm{out}$.

\begin{definition}[{see~\cite{CLR1} and~\cite{KLRR}}] \label{def:t-emb}
A finite t-embedding~$\cT$ of a planar graph \mbox{$\G^*=(B\cup W)^*$} with the topology of the sphere and a marked vertex $v_{\operatorname{out}}$ is a proper embedding of its augmentation at $v_{\operatorname{out}}$ into the complex plane such that all edges are straight segments, all faces (except the outer one) are convex polygons, and that for each inner vertex~$v$ of~$\G^*$ the sum of angles of black faces adjacent to $\cT(v)$ equals~$\pi$; see Fig.~\ref{fig:p-emb}.
\end{definition}

For an edge~$(bw)$ of~$\G$, let~$(bw)^*$ or simply $bw^*$ be the corresponding edge of~$\G^*$ oriented so that~$b$ is to the right. Given an oriented edge~${e^* = (vv')}$ of~$\G^*$ denote ${d\cT( e^* ) := \cT( v') - \cT(v)}$ and let
\[
K(b,w)=K_\cT(b,w)\ :=\ d\cT(bw^*)\,.
\]
It is easy to see that the angle condition given in Definition~\ref{def:t-emb} implies that~$K$ is a Kasteleyn matrix for the bipartite dimer model on~$\G$ with weights \mbox{$\chi_{bw}:=|d\cT(bw^*)|$} (e.g., see~\cite{KLRR} for a background). This means that the partition function of the model is equal to~$|\det K|$ and that
\begin{align}
\mathbb{P}\left[\begin{array}{l}\text{edges~$(b_1w_1),\ldots,(b_kw_k)$ simultaneously}\\
\text{present in a random dimer configuration} \end{array}\right]\qquad &\notag \\  \
\ =\ \det [K^{-1}(w_j,b_k)]_{j,k=1}^n\cdot\textstyle \prod_{k=1}^n K(b_k,w_k)\,.&
\label{eq:P-k-dimers}
\end{align}
The next definition did not appear in~\cite{KLRR,CLR1} and is specific to this paper.
\begin{definition} \label{def:p-emb}
We call a finite t-embedding~$\cT$ from Definition~\ref{def:t-emb} a \emph{perfect} \mbox{t-embedding} or a \emph{p-embedding} if its outer face is a tangential polygon (possibly, non-convex) to the unit circle and all non-boundary edges emanating from boundary vertices are bisectors of the corresponding angles; see Fig.~\ref{fig:p-emb}.
\end{definition}
Let $\cT$ be a perfect t-embedding. We use the notation~$\partial B\subset B$ and $\partial W\subset W$ for the set of \emph{boundary faces} of~$\cT$, i.e., those faces that are adjacent to the outer face. Now consider the bisectors of the explementary angles of the boundary polygon of $\cT$ and glue to each boundary edge an outer face with a color different from the color of the incident boundary face, such that non-boundary edges adjacent to boundary vertices lie on these bisectors; see Fig.~\ref{fig:p-emb}. Denote the sets of thus obtained black and white faces by $\partial_{\operatorname{out}}B$ and $\partial_{\operatorname{out}}W$, respectively. Denote~$\partial_\mathrm{out}\cT:=\partial_\mathrm{out}B\cup\partial_\mathrm{out} W$ and let
\[
\overline{B}:= B\cup \partial_\mathrm{out} B\quad\text{and}\quad
\overline{W}:= W\cup\partial_\mathrm{out} W\,.
\]
We use the following notation {throughout the paper,} see Fig.~\ref{fig:p-emb}:
\begin{itemize}
	\item The number of outer faces of $\cT$ {(i.e., the degree of~$v_\mathrm{out}$)} is $2n$;
\item the boundary vertices of~$\cT$ are labeled~$v_1,\ldots,v_{2n}$ in the counterclockwise order (as usual, we set~$v_{2k+1}:=v_1$ etc);
\smallskip
\item the edge~$~(v_{2k-1}v_{2k})$ separates the faces~$w_k\in\partial W$ and~$b_{\mathrm{out},k}\in\partial_\mathrm{out}B$\,;
 the edge~$(v_{2k}v_{2k+1})$ separates~$b_k\in\partial B$ and~$w_{\mathrm{out},k}\in \partial_\mathrm{out}W$\,;
\smallskip
\item $\phi_k:=\arg \cT(v_k)$ denotes the direction from the origin towards~$\cT(v_k)$ with the convention that~$\phi_{k+1}-\phi_k\in (0,\pi)$ for all~$k=1,\ldots,2n$ and therefore~$\phi_{2n+1}=\phi_1+2\pi$;

\smallskip
\item the inner half-angle of the boundary polygon of~$\cT$ equals
$\frac\pi2-\xi_{2k}$ at a vertex~$\cT(v_{2k})$ and~$\frac\pi2+\xi_{2k+1}$ at a vertex~$\cT(v_{2k+1})$,
where~$\xi_k\in(-\frac\pi 2,\frac\pi 2)$.
\end{itemize}
It is easy to see that
\begin{equation}\label{eq:phi-phi=xi-xi}
\phi_{2k-1}-\xi_{2k-1}=\phi_{2k}-\xi_{2k} \quad \text{and}\quad \phi_{2k}+\xi_{2k}=\phi_{2k+1}+\xi_{2k+1}
\end{equation}
for all~$k=1,\ldots,n$ since these expressions are nothing but the directions to the points at which the lines containing the boundary edges of~$\cT$ touch the unit circle. In particular, these identities imply that~$\sum_{k=1}^n(\phi_{2k}-\phi_{2k-1})=\pi$.

\begin{remark}\label{rem:xi-k}
{The domain covered by a perfect \mbox{t-embedding}~$\cT$ {admits} an explicit description in terms of the angles $\xi_k$ introduced above. Namely, if we define a function $\xi_\cT:\mathbb{\R}/2\pi\mathbb{Z}\to (-\frac\pi 2,\frac\pi 2)$ by setting $\xi_\cT(\phi_k)=\xi_k$ and extending it linearly along the segments~$[\phi_k,\phi_{k+1}]$, then~$\cT$ covers the domain~$\Omega_{\xi_\cT}$ defined by~\eqref{eq:omega-xi-def}; see also Fig.~\ref{fig:p-emb}. In particular, $\cT(v_k) = e^{i\phi_k}/\cos \xi_k$ and the convergence assumption made in Theorem~\ref{thm:main-GFF} implies that the angles $\xi_k=\xi_k^\delta$ remain isolated from~$\pm \frac\pi 2$ as~$\delta\to 0$.}
\end{remark}
Following~\cite{CLR1}, we call a function~$\eta:\overline{B}\cup \overline{W}\to\mathbb{T}:=\{\eta\in\C:|\eta|=1\}$ an \emph{origami square root function} if
\begin{equation}
\label{eq:dT=etaeta}
d\cT(bw^*)\ \in\ \overline{\eta}_b\overline{\eta}_w\R\quad \text{for all adjacent~$b\in \overline{B}$ and~$w\in \overline{W}$;}
\end{equation}
the existence of such functions easily follows from the angle condition in Definition~\ref{def:t-emb} (note that the same condition holds around all boundary vertices of~$\cT$). Let us emphasize that the values of~$\eta$ are defined \emph{up to the signs} only. In most places (e.g., in the crucial Lemma~\ref{lem:t-hol-true}) these signs are irrelevant. However, sometimes (e.g., in~\eqref{eq:def-Fb=K-1}) we need to make a choice of one of the two possible values. Below we assume that these choices are made once forever for all faces of~$\cT=\cT^\delta$, in an arbitrary manner. Note also that the stronger condition $d\cT(bw*) \in \overline{\eta}_b\overline{\eta}_w \R_+$ does not admit any solution~$\eta$ if the graph~$\G$ contains faces of degree in~$4\mathbb{Z}$; cf.~\cite[Remark~2.5]{CLR1}.

In what follows we denote by~$z$ the complex coordinate in~$\C$; note that one can view~$dz$ as an extension of the discrete differential form~$d\cT$ defined on edges of~$\G^*$ into the domain~$\Omega_{\xi_\cT}\subset\C$ covered by a (perfect) t-embedding~$\cT$.

\begin{definition}\label{def:O} Let~$\eta$ be an origami square root function on a t-embedding~$\cT$. The associated origami map $\cO$ is a primitive of the differential form
\begin{equation}
\label{eq:def-O}
d \cO (z) := \begin{cases}
  \overline{\eta}_b^2 \,d \bar{z} & \text{if $z$ {belongs to a} black face $\cT(b)$,}\\
  \eta_w^2 \,dz & \text{if $z$ {belongs to a} white face $\cT(w)$.}
\end{cases}
\end{equation}
Note that $d\cO$ also can be seen as a discrete $1$-form on edges of $\G^*$ by setting
\begin{equation}
\label{eq:dO-on-T}
d \cO ( bw^*)\ :=\  
\overline{\eta_b^2d \cT( bw^*)}\ =\ \eta_w^2 \,d\cT( bw^*).
\end{equation}
\end{definition}

\begin{remark}
	As discussed in~\cite{CLR1,KLRR}, one can think about the origami map in the following way: to construct {the mapping $z\mapsto\cO(z)$} one folds the {complex} plane along each of the edges of a t-embedding~$\cT$; this procedure is locally consistent due to the angle condition in Definition~\ref{def:t-emb}. The complex coordinate of a given point~$z$ after the folding is performed is~$\cO(z)$.
\end{remark}

It is easy to see that the condition~\eqref{eq:dT=etaeta} defines the function~$\eta^2$ uniquely up to transforms~$\eta^2_b\mapsto \lambda\eta^2_b$, $\eta^2_w\mapsto\overline{\lambda}\eta^2_w$, where~$\lambda\in\mathbb{T}$. This means that the origami map~$z\mapsto \cO(z)$ is defined uniquely up to rotations and translations.

If~$\cT$ is a perfect t-embedding, then one can fix this ambiguity in the definition of the origami square root function and of the origami map by choosing the convention
\begin{equation}
\label{eq:eta=perfect}
\begin{array}{rcl}
\overline{\eta}_{b_k}^2&\!\!=\!\!&-ie^{i(\phi_{2k}+\xi_{2k})}\ =\ -ie^{i(\phi_{2k+1}+\xi_{2k+1})},\\
\overline{\eta}_{w_k}^2&\!\!=\!\!&ie^{i(\phi_{2k}-\xi_{2k})}\ =\ ie^{i(\phi_{2k-1}-\xi_{2k-1})},
\end{array}
\end{equation}
for all boundary faces~$b_k\in\partial B$ and~$w_k\in\partial W$, respectively, and, moreover,
\[
\cO(v_k)\ =\ \tan\xi_k\quad \text{for all}\ \ k=1,\ldots,2n.
\]
In particular,~$\cO$ maps the boundary of the domain~$\Omega_{\xi_\cT}$ in~$\R$. (The fact that all boundary segments of a perfect t-embedding are mapped onto the same line trivially follows from the bisector condition in Definition~\ref{def:O}.) A graph of the mapping~$z\mapsto \cO(z)$ plotted over the domain~$\Omega_{\xi_\cT}$ is a piece-wise linear surface in the space~$\R^2\times\R^2$, which is required to converge to the \emph{Lorentz-minimal} surface~$\rS_\xi\subset\Omega_\xi\times \R^2$ in the setup of Theorem~\ref{thm:main-GFF}. This necessarily implies that~$\rS_\xi\subset\Omega_\xi\times \R$ since this condition holds at the boundary of~$\partial\Omega_\xi$.

\subsection{Inverse Kasteleyn matrix and t-holomorphic functions}\label{sec:def_t_holom}
We now recall the definition and basic properties of the so-called t-holomorphic functions on t-embeddings. This notion was recently introduced and studied in~\cite{CLR1}; we refer the reader to this paper for more details. In what follows we only consider t-holomorphic functions with standard boundary conditions in the terminology of~\cite{CLR1}; in fact, the only functions to which we apply the general theory in this paper come from the entries of the inverse Kasteleyn matrix (see equation~\eqref{eq:def-Fb=K-1} below) and thus satisfy these boundary conditions.

Given a collection~$\mathfrak{p}$ of faces of~$\cT$ (in fact, for the purposes of this paper it is enough to assume that~$\mathfrak{p}$ is a single face; see definition~\eqref{eq:def-Fb=K-1} below) we denote the union of remaining faces~$\cT_\mathfrak{p}$ and call it a \emph{punctured} t-embedding.

\begin{definition}[t-holomorphic functions, `fake' complex values] \label{def:t-hol-fake}
Let~$\cT$ be a finite t-embedding and~$\mathfrak{p}\subset B$. We say that a function~$F^\tw_\frb:\overline{W}\to\C$ is t-black-holomorphic on~$\cT_\mathfrak{p}$ and satisfies standard boundary conditions if
\begin{itemize}
\item $F^\tw_\frb(w)\in\eta_w\R$ for all~$w\in \overline{W}$ and, moreover,~$F^\tw_\frb(w)=0$ if~$w\in W_\mathrm{out}$;

\item for all inner black faces~$b \in B\smallsetminus\mathfrak{p}$ the following identity holds:
\begin{equation}
\label{eq:t-hol-fake-def}
\textstyle \int_{\partial b}F^\tw_\frb d\cT\ :=\ -\sum_{w\in W:\,w\sim b} F^\tw_\frb(w)d\cT(bw^*)\ =\ 0.
\end{equation}
\end{itemize}
Given~$\mathfrak{p}\subset W$, one defines t-white-holomorphic on~$\cT_\mathfrak{p}$ functions~$F^\tb_\frw:\overline{B}\to C$ satisfying standard boundary conditions similarly.
\end{definition}
Since~$d\cT(bw^*)$ is nothing but the entry of the Kasteleyn matrix~$K$, the latter condition is equivalent to say that~$KF^\tw_\frb=0$ on~$B\smallsetminus \mathfrak{p}$. If~$\mathfrak{p}=\{b\}$, this condition is clearly satisfied by the function~$K^{-1}(\,\cdot\,,b)$, where we formally define~$K^{-1}(w,b):=0$ if $w\in W_\mathrm{out}$. Therefore, for each~$b\in B$ the function
\begin{equation}
\label{eq:def-Fb=K-1}
F^\tw_b(w):=\overline{\eta}_bK^{-1}(w,b)
\end{equation}
is t-black-holomorphic on~$\cT_{\{b\}}$ and satisfies standard boundary conditions. A similar statement holds for the functions~$F^\tb_w(b):=\overline{\eta}_wK^{-1}(w,b)$ with~$w\in W$.

In~\cite{CLR1} we argued that instead of working with `essentially real' (i.e., having a prescribed complex phase) values~$K^{-1}(w,b)$ of~$F^\tw_\frb(w)$, it is convenient to introduce their `complexifications'; see Lemma~\ref{lem:t-hol-true} below. To do this on general t-embeddings, we introduce their \emph{splittings}: a black splitting~$\cT^\tb_\spl$ is obtained from~$\cT$ by adding diagonal segments in all its black faces of degree at least~$4$ so as they are decomposed into triangles.
As in~\cite[Section~5]{CLR1}, we still view $\cT_\spl^\tb$ as a t-embedding by interpreting each added segment as a white $2$-gon with zero angles, note that this does not break the angle condition; in particular one can extend the origami square root function~$w\mapsto\eta_w$ onto these 2-gons. Let $\G_\spl^\tb=B_\spl^\tb\cup W_\spl^\tb$ denote the associated dimer graph. White~splittings~$\cT^\tw_\spl$ are defined similarly; see~\cite[Figure~6]{CLR1}.

\begin{lemma}[t-holomorphic functions, `true' complex values]\label{lem:t-hol-true} Let~$\mathfrak{p}\subset B$. A function~$F_\frb^\tw$ is t-black-holomorphic on~$\cT_\mathfrak{p}$ and satisfies standard boundary conditions if and only if there exists a function~$F_\frb^\tb:B^\tb_\spl\smallsetminus \mathfrak{p}\to\C$ such that
\begin{equation}
\label{eq:t-hol-true-def}
F_\frb^\tw(w)\ =\ \Pr(F_\frb^\tb(b),\eta_w\R)\quad \text{if}~w\sim b,\ \ w\in \overline{W}{}_{\!\spl}^\tb,\ \ b\in B_\spl^\tb\smallsetminus\mathfrak{p}.
\end{equation}
(Note that we also extend~$F_\frb^\tw$ from~$W$ to~$W^\tb_\spl$ along the way). A similar statement holds for t-white-holomorphic functions and white splittings~$\cT^\tw_\spl$.
\end{lemma}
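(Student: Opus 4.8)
\emph{Proof plan.} The plan is to reduce the equivalence to an elementary fact about orthogonal projections onto three distinct lines through the origin, to establish it first for a single triangle of the splitting, and then to patch this local information across the triangulation of each black polygon. Throughout I work with the black splitting $\cT^\tb_\spl$ and the t-black-holomorphic case; the t-white case is identical after exchanging the two colours. I first record the explicit projection formula: for a unit vector $\eta_w$ one has $\Pr(X,\eta_w\R)=\Re(\overline{\eta}_w X)\,\eta_w=\tfrac12(X+\eta_w^2\overline{X})$, so writing $F^\tw_\frb(w)=f_w\eta_w$ with $f_w\in\R$ the relation \eqref{eq:t-hol-true-def} becomes $f_w=\Re(\overline{\eta}_w F^\tb_\frb(b))$. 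The basic linear-algebra input is that for two \emph{distinct} lines $\eta_w\R\neq\eta_{w'}\R$ the map $X\mapsto(\Pr(X,\eta_w\R),\Pr(X,\eta_{w'}\R))$ is a linear isomorphism of $\C\cong\R^2$ onto $\eta_w\R\times\eta_{w'}\R$; hence any two prescribed projections lift to a unique $X$.

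The crux is the equivalence for a single non-degenerate black triangle $b$ of $\cT^\tb_\spl$ with white neighbours $w_1,w_2,w_3$, whose lines are pairwise distinct because the three edges of a genuine triangle point in three different directions. Writing $d\cT(bw_i^*)=\ell_i\,\overline{\eta}_b\overline{\eta}_{w_i}$ with $\ell_i\in\R$, the fact that the boundary of a triangle closes up, $\sum_i d\cT(bw_i^*)=0$, gives $\sum_i\ell_i\overline{\eta}_{w_i}=0$, which pins down $\ell_1:\ell_2:\ell_3$ as the pairwise ``cross products'' $\Im(\eta_{w_i}\overline{\eta}_{w_j})$. A direct computation shows that the single holomorphicity relation $\int_{\partial b}F^\tw_\frb\,d\cT=-\,\overline{\eta}_b\sum_i f_{w_i}\ell_i=0$ is exactly the solvability (Rouch\'e--Capelli) condition of the overdetermined system $\Re(\overline{\eta}_{w_i}X)=f_{w_i}$, $i=1,2,3$. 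Thus, for one triangle, holomorphicity of $F^\tw_\frb$ is equivalent to the existence of a (unique) $X=F^\tb_\frb(b)$ realizing all three projections.

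I would then globalize in two directions. For ``complex values $\Rightarrow$ holomorphic'', I take $F^\tb_\frb$ on the triangles and define $F^\tw_\frb$ on every white face and every $2$-gon of $\cT^\tb_\spl$ by projection; the value on a shared diagonal $2$-gon is unambiguous precisely because \eqref{eq:t-hol-true-def} is imposed from both incident triangles. Each triangle is then holomorphic by the local statement, and summing these relations over the triangles of an original black polygon cancels the interior diagonal terms in pairs, leaving exactly $\sum_{w\in W,\,w\sim b}F^\tw_\frb(w)\,d\cT(bw^*)=0$. For the converse I triangulate each convex polygon as a fan and construct $F^\tb_\frb$ inductively along the resulting path of triangles: on the first triangle its two original white edges determine $X$, whose projection fixes the value on the first diagonal; crossing that diagonal I propagate to the next triangle, using the (now known) diagonal together with one original edge, and so on. By construction each of these triangles satisfies its own holomorphicity relation, so the condition for the very last triangle equals the total polygon relation minus the sum of the others, hence is forced by the polygon-wide holomorphicity hypothesis. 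The standard boundary conditions $F^\tw_\frb\equiv0$ on $W_\mathrm{out}$ translate into $F^\tb_\frb(b)\in i\eta_w\R$ on the adjacent boundary triangles and are carried along verbatim, and the puncture $\mathfrak{p}$ is handled by excluding the corresponding faces throughout.

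The main obstacle I anticipate is not the single-triangle algebra but the bookkeeping in the converse: one must check that the fan construction is independent of the auxiliary choices, so that the extension of $F^\tw_\frb$ to the diagonals and the function $F^\tb_\frb$ on the triangles are genuinely well defined, and that the closing relation on the last triangle really collapses to the original polygon constraint and to nothing more. A clean way to certify this is to compare real degrees of freedom on the two sides of an $m$-gon: the holomorphic data carries $m-1$ free reals (the $m$ boundary values $f_w$ minus one holomorphicity constraint), while the $m-2$ complex values on the triangles subject to the $m-3$ diagonal-matching constraints also carry $m-1$, so the two descriptions are in bijection and no hidden compatibility is lost when passing between a polygon and its triangulation.
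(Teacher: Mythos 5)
Your argument is correct and follows essentially the same route as the proof that the paper delegates to~\cite{CLR1}: the rank/solvability analysis of the three projection equations $\Re(\overline{\eta}_{w_i}X)=f_{w_i}$ on a single non-degenerate triangle, whose compatibility condition is exactly $\sum_i \ell_i f_{w_i}=0$ because $(\ell_1,\ell_2,\ell_3)$ spans the left kernel coming from $\sum_i \ell_i\eta_{w_i}=0$ (this is \cite[Lemma~3.4]{CLR1}), followed by patching across the triangulation of each convex face with cancellation along the $2$-gons (this is \cite[Proposition~5.4]{CLR1}). The only point to tidy up is that the lemma concerns an arbitrary splitting $\cT^\tb_\spl$, not only a fan, and a general triangulation may contain triangles all three of whose sides are diagonals, so your sweep should instead order the triangles by repeatedly removing an ear (a triangle with two original edges, which every triangulation of a convex polygon possesses); the induction then goes through verbatim.
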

\begin{proof} See~\cite[Lemma~3.4]{CLR1} for the equivalence of the condition~\eqref{eq:t-hol-fake-def} and the existence of a value~$F_\frb^\tb(b)\in\C$ such that~\eqref{eq:t-hol-true-def} holds in the case when~$b$ is a triangular face of~$\cT$ and~\cite[Proposition~5.4]{CLR1} for a general case.
\end{proof}
It is worth noting that the `true' complex values~$F_\frb^\tb$ (resp., $F_\frw^\tw$) of t-holomorphic functions depend on the choice of the splitting~$\cT^\tb_\spl$ (resp., $\cT^\tw_\spl$) of a given t-embedding~$\cT$. However, this dependence is local: if one changes a splitting of a single face~$b$, only the values of~$F_\frb^\tb$ on the faces obtained from~$b$ change. Moreover, a priori regularity estimates for t-holomorphic functions (see Theorem~\ref{thm:F-precomp} below) eventually imply that these values are actually almost independent of the way in which faces are split, at least for bounded t-holomorphic functions and at faces lying in the bulk of~$\cT$. Hereafter when we talk about values of $F_\frw^\tw$ (resp., $F_\frb^\tb$) we assume that a white (resp., black) splitting was chosen arbitrarily if not explicitly specified.

\begin{remark} \label{rem:F=O(1)-equiv}
It is clear that a uniform boundedness of the `true' complex values~$F_\frb^\tb$ of a t-black-holomorphic function implies the boundedness of their projections~$F_\frb^\tw$. However, the converse implication is not straightforward as faces of t-embeddings~$\cT=\cT^\delta$ can have very small angles. Still, this holds provided that~$\cT^\delta$ satisfy  \emph{both} assumptions~{\LipKd} and {\ExpFat} as \mbox{$\delta\to 0$}; see~\cite[Lemma~6.20]{CLR1}. In particular, in the setup of Theorem~\ref{thm:main-GFF} a uniform estimate~$F_\frb^{\tw,\delta}=O(1)$ on all compacts~$K\subset \Omega_\xi$ implies the same for~$F_\frb^{\tb,\delta}$. A~similar equivalence holds for t-white-holomorphic functions.
\end{remark}

In Section~\ref{sub:boundedness} (see also Lemma~\ref{lem:max-t-hol} below) we will also need to define the `true' complex values at~$\partial_\mathrm{out} \cT$ for t-holomorphic functions satisfying standard boundary conditions. Each outer boundary face~$b_{\mathrm{out},k}$ is incident to two boundary faces~$w_{\mathrm{out},k},w_{\mathrm{out},{k-1}}$ at which~$F^\tw_\frb$ vanishes, thus one clearly cannot use the same definition~\eqref{eq:t-hol-true-def} at~$b\in\partial_{\mathrm{out}}B$. In order to overcome this difficulty we define two values~$F^\tb_\frb(b^\pm_{\mathrm{out},k})\in\C$ instead of a single one so that
\begin{equation}
\label{eq:F-pa-out-def}
{\Pr(F^\tb_\frb(b^\pm_{\mathrm{out},k}),\eta_{w_k}\R)=F^\tw_\frb(w_k),\quad\begin{array}{l}
\Pr(F^\tb_\frb(b^+_{\mathrm{out},k},\eta_{w_{\mathrm{out},k}}\R)=0,\\[4pt]
\Pr(F^\tb_\frb(b^-_{\mathrm{out},k},\eta_{w_{\mathrm{out},k-1}}\R)=0,
\end{array}}
\end{equation}
We proceed similarly in order to define the `true' boundary values~$F^\tw_\frw(w_{\mathrm{out},k}^\pm)$ of t-white-holomorphic functions satisfying standard boundary conditions.

\begin{proposition} \label{prop:closed-forms-single}
Let~$F_\frb$ (resp., $F_\frw$) be a t-black- (resp., t-white-) holomorphic function defined on a punctured t-embedding~$\cT_\mathfrak{p}$, $\mathfrak{p}\subset B$ (resp., $\mathfrak{p}\subset W$) and satisfying standard boundary conditions. Then, the following identities hold on edges of~$\cT^\tb_\spl$ (resp., $\cT^\tw_\spl$) not adjacent to~$\mathfrak{p}$:
\begin{equation}
\label{eq:closed-forms-edges}
{2}F^\tw_{\frb} \,d \cT\,=\,F_\frb^\tb\, d \cT + \overline{F}{}_\frb^\tb\, d \cO,\qquad
  {2}F_\frw^\tb \,d\cT\,=\,F_\frw^\tw \,d \cT + \overline{F}{}_\frw^\tw \,d \overline{\cO},
\end{equation}
where we use the notation $F^\tw((bw)^*):=F^\tw(w)$ and~$F^\tb((bw)^*):=F^\tb(b)$.

Moreover, the discrete differential forms~\eqref{eq:closed-forms-edges} can be viewed as closed piece-wise constant differential forms
\begin{equation}
\label{eq:closed-forms-plane}
F_\frb^{\tb}{(z)dz} + \overline{F_\frb^{\tb}{(z)}} d \cO{(z)},\qquad F_\frw^{\tw}{(z)d z} + \overline{F_\frw^{\tw}{(z)}}d \overline{\cO{(z)}}
\end{equation}
defined in the domain on complex plane covered by~$\cT$, where in the first expression one sets
\[
\begin{array}{ll}
F_\frb^\tb(z):=F_\frb^\tb(b) & \text{if $z\in\cT(b)$ for~$b\in B^\tb_\spl\smallsetminus \mathfrak{p}$,}\\[4pt]
F_\frb^\tb(z):=F_\frb^\tb(\bc)&  \text{if $z\in\cT(w)$ for~$w\in W$, where  $w\sim\bc\in B^\tb_\spl\smallsetminus\mathfrak{p}$;}
\end{array}
\]
the differential form~\eqref{eq:closed-forms-plane} does not depend on the choice of a face~$\bc$ adjacent to~$w$ in the second line (and similarly for the second expression in~\eqref{eq:closed-forms-plane}).
\end{proposition}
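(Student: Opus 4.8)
The plan is to derive the edge identities \eqref{eq:closed-forms-edges} by a one-line computation from the projection characterization of t-holomorphic functions, and then to deduce closedness of the forms \eqref{eq:closed-forms-plane} by combining these identities with the fact that $d\cO$ is piece-wise constant. The only elementary input I would record first is the projection formula: for $X\in\C$ and a line $\eta\R$ with $\eta\in\bbT$ one has $\Pr(X,\eta\R)=\tfrac12(X+\eta^2\overline X)$, which is visibly independent of the sign ambiguity of $\eta$. Applying this to the defining relation \eqref{eq:t-hol-true-def}, i.e.\ $F^\tw_\frb(w)=\Pr(F^\tb_\frb(b),\eta_w\R)$ for $w\sim b$, gives $2F^\tw_\frb(w)=F^\tb_\frb(b)+\eta_w^2\,\overline{F^\tb_\frb(b)}$; multiplying through by $d\cT(bw^*)$ and using $\eta_w^2\,d\cT(bw^*)=d\cO(bw^*)$ from \eqref{eq:dO-on-T} produces the first identity in \eqref{eq:closed-forms-edges} edge by edge. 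The t-white-holomorphic case is identical once one notes $\eta_b^2\,d\cT(bw^*)=\overline{d\cO(bw^*)}=d\overline{\cO}(bw^*)$, again by \eqref{eq:dO-on-T}, which is what turns $d\cO$ into $d\overline{\cO}$.

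Next I would upgrade these edge relations to a statement about forms in the plane. On a black face $\cT(b)$ one has $d\cO=\overline\eta_b^2\,d\bar z$ by \eqref{eq:def-O}, so the first form in \eqref{eq:closed-forms-plane} reads $F^\tb_\frb(b)\,dz+\overline{F^\tb_\frb(b)}\,\overline\eta_b^2\,d\bar z$, a genuine constant-coefficient combination of $dz$ and $d\bar z$. On a white face $\cT(w)$ one instead has $d\cO=\eta_w^2\,dz$, and the same form collapses to $\big(F^\tb_\frb(\bc)+\eta_w^2\,\overline{F^\tb_\frb(\bc)}\big)\,dz=2\Pr(F^\tb_\frb(\bc),\eta_w\R)\,dz=2F^\tw_\frb(w)\,dz$ for any adjacent black face $\bc$. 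This single computation establishes at once that the form is constant on each face and that its value on a white face is independent of the choice of $\bc$, since the right-hand side depends only on $F^\tw_\frb(w)$.

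Finally I would verify closedness. Being constant on each face, the form is trivially closed in the interior of every face, so it suffices to check that its tangential component matches across each interior edge of $\cT^\tb_\spl$ not adjacent to $\mathfrak p$ — this tangential-matching condition is exactly what distributional closedness means for a piece-wise constant $1$-form. In the splitting $\cT^\tb_\spl$ every edge $(bw)^*$ borders a black triangle $\cT(b)$ and a white face $\cT(w)$. Integrating the black-side form along this edge yields $F^\tb_\frb(b)\,d\cT(bw^*)+\overline{F^\tb_\frb(b)}\,d\cO(bw^*)$, which by the edge identity just proved equals $2F^\tw_\frb(w)\,d\cT(bw^*)$; integrating the white-side form $2F^\tw_\frb(w)\,dz$ along the same edge gives the identical value. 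Hence the tangential parts agree and the form is closed; the second form in \eqref{eq:closed-forms-plane} is handled symmetrically, with the roles of black and white faces exchanged and $d\overline{\cO}$ in place of $d\cO$.

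I do not expect a serious analytic obstacle here: the argument is essentially a direct verification. The one point demanding care is the bookkeeping around the splitting, specifically the degenerate white $2$-gons attached to the added diagonals. At such a $2$-gon the two adjacent black triangles carry a priori distinct `true' values $F^\tb_\frb$, so the legitimacy of the collapse to $2F^\tw_\frb(w)\,dz$ — and hence the $\bc$-independence above — rests on the common projection $\Pr(F^\tb_\frb(\cdot),\eta_w\R)$ being well defined on every white face; this consistency is precisely what Lemma~\ref{lem:t-hol-true} (through \cite[Proposition~5.4]{CLR1}) guarantees, and it is exactly the place where the splitting machinery of \cite{CLR1} must be invoked rather than re-proved.
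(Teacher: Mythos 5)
Your proof is correct and follows the same direct-verification route as the paper's source for this statement (the paper simply cites~\cite[Lemma~3.8]{CLR1}, whose argument is exactly this computation): the projection identity $\Pr(X,\eta\R)=\tfrac12(X+\eta^2\overline{X})$ combined with~\eqref{eq:dO-on-T} gives the edge relations, and matching tangential integrals across edges of the splitting gives closedness. You also correctly flag the only delicate point, the consistency of the two `true' values across the degenerate white $2$-gons, and correctly attribute it to Lemma~\ref{lem:t-hol-true} rather than re-proving it.
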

\begin{proof}
See~\cite[Lemma~3.8]{CLR1}.
\end{proof}
It is well known (e.g., see a discussion in~\cite[Section~7.3]{CLR1}) that entries of the inverse Kasteleyn matrix -- or t-holomorphic functions themselves -- often has a rather unstable behavior with respect to the microscopic structure of the boundary of (big) dimer graphs. In the next proposition we introduce more stable objects, namely the primitives of products of two such functions.
\begin{proposition} \label{prop:closed-forms-double}
Let~$F_\frb$ (resp., $F_\frw$) be a t-black- (resp., t-white-) holomorphic function defined on a punctured t-embedding~$\cT_\mathfrak{p}$ and satisfying standard boundary conditions. The following identity holds on edges not adjacent to~$\mathfrak{p}$:
\begin{equation}\label{eq:FFdT=}
F_\frb^\tw F_\frw^\tb\,d\cT \,=\, \tfrac{1}{2}\Re \big( F_\frb^\tb F_\frw^\tw \,d \cT  +  \overline{F}{}^\tb_\frb F^\tw_\frw\, d \cO \big),
\end{equation}
where we use the same convention as in~\eqref{eq:closed-forms-edges}. Similarly to~\eqref{eq:closed-forms-plane}, this form can be viewed as a closed piece-wise constant differential form
\begin{equation}
\label{eq:FFdT-plane}
{\tfrac{1}{2}}\Re \big( F_\frb^\tb{(z) F_\frw^\tw{(z)}dz}+ \overline{F}{}_\frb^\tb{(z)}F^\tw_\frw{(z)}\, d \cO{(z)} \big),
\end{equation}
defined in the complex plane. Moreover, it vanishes on boundary edges of~$\cT$.
\end{proposition}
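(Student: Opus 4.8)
The plan is to prove the three assertions separately: the pointwise identity \eqref{eq:FFdT=} on edges, the closedness of the piece-wise constant form \eqref{eq:FFdT-plane}, and its vanishing on boundary edges. For the identity I would fix an edge $(bw)^*$ and substitute the projection formulas supplied by Lemma~\ref{lem:t-hol-true} and its white analogue,
\[
F_\frb^\tw(w)=\tfrac12\big(F_\frb^\tb(b)+\eta_w^2\overline{F_\frb^\tb(b)}\big),\qquad
F_\frw^\tb(b)=\tfrac12\big(F_\frw^\tw(w)+\eta_b^2\overline{F_\frw^\tw(w)}\big),
\]
where I use that the orthogonal projection onto a line $\ell\R$ with $|\ell|=1$ is $\zeta\mapsto\tfrac12(\zeta+\ell^2\overline{\zeta})$. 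Expanding the product $F_\frb^\tw(w)F_\frw^\tb(b)$, multiplying by $d\cT$, and applying the three relations $\eta_w^2\,d\cT=d\cO$, $\eta_b^2\,d\cT=\overline{d\cO}$ and $\eta_w^2\eta_b^2\,d\cT=\overline{d\cT}$ — the first two being \eqref{eq:dO-on-T} and the last following from $|d\cO|=|d\cT|$ — the four resulting monomials assemble into two complex-conjugate pairs whose half-sum is exactly the right-hand side of \eqref{eq:FFdT=}. This step is purely computational and I anticipate no difficulty.

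For closedness, the cheap statement already holds on $\cT$ itself: summing the edge form $F_\frb^\tw F_\frw^\tb\,d\cT$ around an inner black face $b$ one factors out the constant $F_\frw^\tb(b)$ and invokes the t-black-holomorphicity $\sum_{w\sim b}F_\frb^\tw(w)\,d\cT(bw^*)=0$ from \eqref{eq:t-hol-fake-def}, while around an inner white face one factors out $F_\frb^\tw(w)$ and uses the white analogue of \eqref{eq:t-hol-fake-def}. To promote this to a genuinely closed piece-wise constant form in the plane — which must also be checked across the diagonals of the splittings $\cT^\tb_\spl,\cT^\tw_\spl$ implicit in \eqref{eq:FFdT-plane} — the key observation is that \eqref{eq:FFdT-plane} \emph{restricts on each monochromatic face to a constant multiple of a single closed form of Proposition~\ref{prop:closed-forms-single}}. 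Indeed, using $d\cO=\overline{\eta}_b^2\,d\bar z$ on a black face (cf.\ \eqref{eq:def-O}) together with $\overline{F_\frw^\tb(b)}\,\eta_b^2=F_\frw^\tb(b)$ (valid since $F_\frw^\tb(b)\in\eta_b\R$), a one-line computation shows that \eqref{eq:FFdT-plane} restricted to $\cT(b)$ equals
\[
\tfrac12\,F_\frw^\tb(b)\big(F_\frb^\tb\,dz+\overline{F_\frb^\tb}\,d\cO\big),
\]
and symmetrically $\tfrac12\,F_\frb^\tw(w)\big(F_\frw^\tw\,dz+\overline{F_\frw^\tw}\,d\overline{\cO}\big)$ on a white face $\cT(w)$. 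Since $F_\frw^\tb(b)$ and $F_\frb^\tw(w)$ are constant on the respective faces, and the two single forms are closed on the respective splittings by Proposition~\ref{prop:closed-forms-single}, the double form has vanishing circulation around every cell of the common refinement of $\cT^\tb_\spl$ and $\cT^\tw_\spl$ (each such cell lying inside a single black or white face of $\cT$); the agreement of the two descriptions along the edges of $\cT$ is precisely \eqref{eq:closed-forms-edges}. Hence \eqref{eq:FFdT-plane} is closed away from the punctures.

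Finally, the vanishing on boundary edges is immediate from the standard boundary conditions of Definition~\ref{def:t-hol-fake}: every boundary edge of $\cT$ borders an outer face, and either $F_\frw^\tb$ vanishes on the adjacent outer black face $b_{\mathrm{out},k}\in B_\mathrm{out}$ (for the edge $(v_{2k-1}v_{2k})$) or $F_\frb^\tw$ vanishes on the adjacent outer white face $w_{\mathrm{out},k}\in W_\mathrm{out}$ (for the edge $(v_{2k}v_{2k+1})$), so the product $F_\frb^\tw F_\frw^\tb\,d\cT$ is zero there. The one genuinely delicate point is the passage to the plane in the closedness step, where the two functions entering \eqref{eq:FFdT-plane} are built from \emph{different} splittings; the monochromatic restriction identities above are exactly what make this transparent and reduce the whole matter to the already-established Proposition~\ref{prop:closed-forms-single}.
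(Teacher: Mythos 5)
Your proof is correct. Note that the paper itself gives no argument for this proposition --- its ``proof'' is the single line ``See [CLR1, Proposition~3.10]'' --- so there is nothing in the text to compare against; your write-up supplies the missing computation, and it is the natural one. The edge identity follows exactly as you say from the two projection formulas of Lemma~\ref{lem:t-hol-true} together with $\eta_w^2\,d\cT=d\cO$, $\eta_b^2\,d\cT=\overline{d\cO}$ and $\eta_b^2\eta_w^2\,d\cT=\overline{d\cT}$; the monochromatic restriction identities (e.g.\ that \eqref{eq:FFdT-plane} equals $\tfrac12 F_\frw^\tb(b)\bigl(F_\frb^\tb\,dz+\overline{F_\frb^\tb}\,d\cO\bigr)$ on $\cT(b)$, uniformly over the black triangles of the splitting since $\Pr(F_\frw^\tw(\wc),\eta_b\R)=F_\frw^\tb(b)$ for every adjacent $\wc$) check out; and the boundary vanishing is immediate from the standard boundary conditions. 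One phrasing quibble: for a piece-wise constant $1$-form, ``vanishing circulation around every cell'' is automatic (each cell carries a constant form) and is \emph{not} the criterion for closedness; what is needed is agreement of the tangential components across every interface of the common refinement. Your argument in fact delivers exactly that --- across splitting diagonals because Proposition~\ref{prop:closed-forms-single} gives closedness of the single form and you multiply it by a constant that does not change within the face, and across edges of~$\cT$ because both one-sided descriptions restrict to $F_\frb^\tw F_\frw^\tb\,d\cT$ there by \eqref{eq:closed-forms-edges} --- so only the stated criterion, not the substance, needs adjusting.
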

\begin{proof} See \cite[Proposition~3.10]{CLR1}.
\end{proof}
Recall that the `true' complex values of t-holomorphic functions~$F_b(\cdot)$, $b\in B$, and~$F_w(\cdot)$, $w\in W$, defined in Lemma~\ref{lem:t-hol-true} can be thought of as a `complexification' of the dimer coupling function~$K^{-1}(w,b)$ with respect to \emph{one} of its arguments. The next proposition describes the result of this procedure applied to \emph{both} arguments of $K^{-1}$ simultaneously.

\begin{proposition} \label{prop:Fpmpm-def}
There exist four complex-valued functions~$\FF{\pm\pm}=\FF{\pm\pm}_{\cT^\delta}$
defined on pairs $(\bc,\wc)$ of faces $\bc\in B^\tb_\mathrm{spl}$ and $\wc\in W^\tw_\mathrm{spl}$ obtained (by splitting) from non-adjacent faces of a t-embedding~$\cT=\cT^\delta$ such that

\smallskip

\noindent (i) $\FF{--}(\bc,\wc)=\overline{\FF{++}(\bc,\wc)}$ and $\FF{+-}(\bc,\wc)=\overline{\FF{-+}(\bc,\wc)}$\,;

\smallskip

\noindent (ii) if $w\sim\bc\ne b$ and $b\sim \wc\ne w$, then
\begin{align*}
K^{-1}(w,b)\ =\ &\tfrac{1}{4}\big(\FF{++}(\bc,\wc)+\eta_b^2\FF{+-}(\bc,\wc)\\
&\phantom{\tfrac{1}{4}\big(\FF{++}(\bc,\wc)}
+\eta_w^2\FF{-+}(\bc,\wc)+\eta_w^2\eta_b^2\FF{--}(\bc,\wc)\big)\,,\\
F_b^\tb(\,\cdot\,)\ =\ &\tfrac{1}{2}\big(\overline{\eta}_b\FF{++}(\,\cdot\,,\wc)+\eta_b\FF{+-}(\,\cdot\,,\wc)\big)\,,\\
F_w^\tw(\,\cdot\,)\ =\ &\tfrac{1}{2}\big(\overline{\eta}_w\FF{++}(\bc,\,\cdot\,)+\eta_w\FF{-+}(\bc,\,\cdot\,)\big)\,.
\end{align*}

\smallskip

\noindent (iii) Moreover, for all~$\eta\in\C$ the function
\[
\tfrac{1}{2}\big(\overline{\eta}\FF{++}(\,\cdot\,,\wc)+\eta\FF{+-}(\,\cdot\,,\wc)\big)\ \ \text{(\ resp.,}\ \
\tfrac{1}{2}\big(\overline{\eta}\FF{++}(\bc,\,\cdot\,)+\eta\FF{-+}(\bc,\,\cdot\,)\big)\ \text{)}
\]
is t-black- (resp., t-white-) holomorphic away from~$\wc$ (resp., from~$\bc$).
\end{proposition}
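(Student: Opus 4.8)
The plan is to obtain the four functions $\FF{\pm\pm}$ by applying the single-variable complexification of Lemma~\ref{lem:t-hol-true} to \emph{both} arguments of the coupling function $K^{-1}(w,b)$, and then to read off (i)--(iii) by keeping track of the phases $\eta_b,\eta_w$. Two elementary facts underlie everything: the explicit form of the projection $\Pr(Z,\eta\R)=\tfrac12(Z+\eta^2\overline Z)$, and the reality relation $\overline{K^{-1}(w,b)}=\overline\eta_w^{\,2}\overline\eta_b^{\,2}\,K^{-1}(w,b)$, which merely restates that $K^{-1}(w,b)\in\eta_w\eta_b\R$ (equivalent to $F_b^\tw(w)=\overline\eta_bK^{-1}(w,b)\in\eta_w\R$). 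I also record the two single complexifications already available from~\eqref{eq:def-Fb=K-1} and Lemma~\ref{lem:t-hol-true}: for fixed $b$ the t-black-holomorphic function $F_b^\tw(w)=\overline\eta_bK^{-1}(w,b)$ has true complex values $F_b^\tb(\bc)$, and for fixed $w$ the t-white-holomorphic function $F_w^\tb(b)=\overline\eta_wK^{-1}(w,b)$ has true complex values $F_w^\tw(\wc)$.

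Fixing a white splitting face $\wc$, I would \emph{define} $\FF{++}(\,\cdot\,,\wc)$ and $\FF{+-}(\,\cdot\,,\wc)$ as the unique solution of the $2\times2$ complex-linear system $\tfrac12(\overline\eta_b\FF{++}(\bc,\wc)+\eta_b\FF{+-}(\bc,\wc))=F_b^\tb(\bc)$ taken over two faces $b\sim\wc$; its determinant $\overline\eta_{b}\eta_{b'}-\overline\eta_{b'}\eta_{b}=2i\Im(\overline\eta_b\eta_{b'})$ is nonzero as soon as $\eta_b^2\neq\eta_{b'}^2$, which holds because two edges of a genuine face $\wc$ are not parallel. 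Setting $\FF{--}:=\overline{\FF{++}}$ and $\FF{-+}:=\overline{\FF{+-}}$ makes (i) hold by definition, and the first identity of (ii) then follows by substituting this defining system into $K^{-1}(w,b)=\eta_b\,\Pr(F_b^\tb(\bc),\eta_w\R)$ (valid for $w\sim\bc$) and collecting the four terms with the help of the reality relation; this is the one short computation I would carry out in full. By construction the middle identity of (ii), defining $F_b^\tb$, also holds.

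The t-holomorphicity in the first variable, i.e.\ the first half of (iii), is the part I expect to go through cleanly. Cramer's rule expresses $\FF{++}(\bc,\wc)$ and $\FF{+-}(\bc,\wc)$ as fixed (i.e.\ $\bc$-independent) complex-linear combinations of the functions $\bc\mapsto F_b^\tb(\bc)$, $b\sim\wc$, each of which is t-black-holomorphic in $\bc$. A complex-linear combination of t-black-holomorphic functions is not t-black-holomorphic in general; however, one checks that the two combinations $\FF{++}+\FF{+-}$ and $i(\FF{+-}-\FF{++})$ appearing in (iii) have \emph{real} coefficients — this is exactly where $|\eta_b|=1$ is used — so they are t-black-holomorphic because t-black-holomorphic functions form a real vector space. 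Since $\tfrac12(\overline\eta\FF{++}+\eta\FF{+-})=\tfrac12\Re\eta\cdot(\FF{++}+\FF{+-})+\tfrac12\Im\eta\cdot i(\FF{+-}-\FF{++})$, the first half of (iii) follows for every $\eta$.

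The main obstacle is the symmetry between the two variables: the construction above encodes t-holomorphicity and the projection identities only in the $\bc$-variable, whereas the third identity of (ii) and the second half of (iii) concern the $\wc$-dependence and the function $F_w^\tw$. I would handle this by carrying out the \emph{same} construction in the opposite order — first complexifying $b\mapsto F_w^\tb(b)$ into $\wc$ and then the resulting family in $w$ into $\bc$ — which, by the black/white-symmetric version of the previous two paragraphs, produces a quadruple satisfying (i), the first identity of (ii), the $F_w^\tw$ identity, and the second half of (iii). It then remains to prove that the two quadruples coincide. Both satisfy (i) and the scalar identity for $K^{-1}(w,b)$ as $w$ ranges over the neighbours of $\bc$ and $b$ over those of $\wc$; I would show that this over-determined real-linear system has a unique solution under the (generic) nondegeneracy of the relevant phases $\eta_w,\eta_b$, so that the two constructions necessarily agree. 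This commutativity of the two complexifications — provable alternatively by a direct computation on a single pair of non-adjacent triangulated faces, where both orders reduce to inverting the same small linear system and the reality relation forces agreement — is the heart of the argument; once it is established, all of (i)--(iii) hold for the common quadruple.
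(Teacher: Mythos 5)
The paper itself contains no proof of this statement: it is quoted from the companion paper and the proof reads ``See~\cite[Proposition 3.12]{CLR1}''. Your construction --- complexifying $K^{-1}$ in each argument separately via Lemma~\ref{lem:t-hol-true} and inverting the $2\times 2$ phase systems --- is exactly the mechanism behind that reference, and the computations you do carry out are correct: the projection formula $\Pr(Z,\eta\R)=\tfrac12(Z+\eta^2\overline Z)$, the reality relation $\overline{K^{-1}(w,b)}=\overline\eta_w^{\,2}\overline\eta_b^{\,2}K^{-1}(w,b)$, the derivation of the first identity of (ii) from the second plus (i), and in particular the observation that $\FF{++}+\FF{+-}$ and $i(\FF{+-}-\FF{++})$ are \emph{real}-linear combinations of the $F_b^\tb$'s (so that (iii) follows from t-holomorphic functions forming a real vector space) is the right way to get (iii).

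The one substantive step you leave unproved is the one you correctly identify as the heart of the matter: that the over-determined system (a split triangle $\wc$ may have three black neighbours, and you only impose two equations) is consistent, and that the two orders of complexification agree. This gap does close, and by a single short computation rather than the two separate arguments you sketch. The two one-variable complexifications give, for all $w\sim\bc$ and $b\sim\wc$, the cross-relation $\eta_b\Pr(F_b^\tb(\bc),\eta_w\R)=K^{-1}(w,b)=\eta_w\Pr(F_w^\tw(\wc),\eta_b\R)$. Writing this out for two neighbours $w,w'\sim\bc$ with $\eta_w^2\ne\eta_{w'}^2$ and eliminating, one obtains
\[
F_b^\tb(\bc)\ =\ \tfrac12\bigl(\overline\eta_b X+\eta_b Y\bigr),\qquad
X=\frac{2(\overline\eta_wF_w^\tw(\wc)-\overline\eta_{w'}F_{w'}^\tw(\wc))}{\overline\eta_w^{\,2}-\overline\eta_{w'}^{\,2}},\quad
Y=\frac{2(\overline\eta_w\overline{F_w^\tw(\wc)}-\overline\eta_{w'}\overline{F_{w'}^\tw(\wc)})}{\overline\eta_w^{\,2}-\overline\eta_{w'}^{\,2}},
\]
valid for \emph{every} $b\sim\wc$ with $X,Y$ manifestly independent of $b$. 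This proves simultaneously that all equations of the over-determined system in the $b$-variable are satisfied by the same pair $(X,Y)=(\FF{++},\FF{+-})$, and (by the symmetric computation, which returns $F_w^\tw(\wc)=\tfrac12(\overline\eta_wX+\eta_w\overline Y)$) that the two quadruples coincide; it also sidesteps the degenerate case where $\wc$ has fewer than two non-parallel original black neighbours. Two minor remarks: your nondegeneracy claim $\eta_b^2\ne\eta_{b'}^2$ needs the neighbours to be chosen across non-parallel edges of the triangle $\wc$, which the formula above makes unnecessary; and your argument for (iii) actually localizes the defect at the black faces \emph{adjacent} to $\wc$ (where $\int_{\partial b}(\cdot)\,d\cT$ picks up $-\lambda\overline\eta_b$), which is what ``away from $\wc$'' means here.
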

\begin{proof}
See~\cite[Proposition 3.12]{CLR1}.
\end{proof}

\subsection{Primitives of t-holomorphic functions and T-graphs}
\begin{definition}
\label{def:I[F]}
Let~$F$ be a t-holomorphic (either t-black- or t-white-) function on a punctured t-embedding~$\cT_\mathfrak{p}$, satisfying standard boundary conditions. We denote by~$\rI_\C[F]$ the primitive of the differential form~\eqref{eq:closed-forms-plane}. Given a direction~$\alpha\in\mathbb{T}$ we also denote by
\begin{equation}
\label{eq:I-alpha-def}
\rI_{\alpha\R}[F]\ :=\ \Pr(\rI_\C[F],\alpha\R)\ =\ \alpha\Re(\overline{\alpha}\,\rI_\C[F])
\end{equation}
the projection of~$\rI_\C[F]$ onto the line~$\alpha\R$.
\end{definition}
Recall that the form~\eqref{eq:closed-forms-edges} can be also written as~$2F_\frb^\tw d\cT$ (resp., $2F_\frw^\tb d\cT$) if we consider it on edges of the t-embedding~$\cT$ only; from this perspective the primitive~$\rI_\C[F]$ is defined on vertices of~$\cT$, including boundary ones.

Note however that this primitive is well defined only locally: namely, if~$F$ is t-holomorphic on a punctured t-embedding~$\cT_\mathfrak{p}$, then~$\rI_\C[F]$ can have additive monodromies around faces from~$\mathfrak{p}$. In particular, if~$b\in B$, then the monodromy of the t-black-holomorphic function~$F_b$ around~$b$ equals
\[
\textstyle \oint_{\partial b} 2F_b^\tw d\cT\ =\ -2\sum_{w\in W: w\sim b}\overline{\eta}_b K^{-1}(w,b) K(b,w)\ =\ -2\overline{\eta}_b.
\]
Similarly, the additive monodromy of~$\rI_\C[F_w]$ around~$w\in W$ equals~$2\overline{\eta}_w$. In particular, the functions $\rI_{i\overline{\eta}_b\R}[F_b]$ and $\rI_{i\overline{\eta}_w\R}[F_w]$ are well-defined.

In fact, it is useful to view the projections~\eqref{eq:I-alpha-def} not as functions defined on vertices of the t-embedding~$\cT$ itself but as functions defined on vertices of an associated \emph{T-graph}~$\cT+\alpha^2\cO$ for t-white-holomorphic functions and $\cT+\alpha^2\overline{\cO}$ for t-black-holomorphic ones. We refer the reader to~\cite[Section~4 and Section~5]{CLR1} for a formal discussion of these T-graphs and only briefly mention their basic properties below.
It is easy to see from~\eqref{eq:def-O} that the images of faces of~$\cT$ have the following form:


\smallskip

\noindent -- in the T-graph~$\cT+\alpha^2\cO$\,:
\begin{itemize}
\item the image of $b\in B$ is a translate of the segment $2\Pr( \cT(b), {\alpha}\overline{\eta}_b \R)$\,;
\item the image of $w\in W$ is a translate of the polygon $(1\!+\!\alpha^2\eta_w^2)\cT(w)$\,;
\end{itemize}

\noindent -- in the T-graph~$\cT+\alpha^2\overline{\cO}$\,:
\begin{itemize}
\item the image of $b\in B$ is a translate of the polygon $(1\!+\!\alpha^2\eta_b^2)\cT(b)$\,;
\item the image of $w\in W$ is a translate of the segment $2\Pr( \cT(w), {\alpha}\overline{\eta}_w\R)$\,.
\end{itemize}

For generic~$\alpha\in\mathbb{T}$ all faces of, say, the T-graph $\cT+\alpha^2\cO$ are non-degenerate and one can consider a natural directed (continuous time) \emph{random walk}~$X_t$ on its vertices: once arriving at a vertex lying inside an edge $(\cT+\alpha^2\cO)(b)$, the walk~$X_t$ jumps to one of the endpoints of this edge so that both $\Re X_t$ and~$\Im X_t$ are martingales and~$\var \mathrm{Tr} X_t =t$. This definition can be extended to all~$\alpha\in\mathbb{T}$ (i.e., to the case when some of the faces of~$\cT+\alpha^2\cO$ are degenerate) by continuity in~$\alpha$; see~\cite[Section~4.1 and Definition~5.5]{CLR1}.

The harmonicity of functions defined on vertices of a T-graph is understood in the usual sense, i.e., with respect to the aforementioned random walk. It is worth noting that one can naturally extend each harmonic function from vertices to all points lying on edges of the T-graph by linearity.
Given a harmonic function on the T-graph~$\cT+\alpha^2\cO$ (resp., on~$\cT+\alpha^2\overline{\cO}$) one can define its \emph{discrete gradient}~$\rD[H]$ by specifying that
\[
dH=\rD[H](b)dz\qquad \text{(\ resp.,}\ dH=\rD[H](w)dz\ \text{)}
\]
along each segment $(\cT\!+\!\alpha^2\cO)(b)$, $b\in B$, (resp. $(\cT\!+\!\alpha^2\overline{\cO})(w)$, $w\in W$) of this T-graph; note that these segments never degenerate.

The next proposition shows that \emph{for each~$\alpha\in \mathbb{T}$} t-holomorphic functions on~$\cT$ can be equivalently defined (at least locally) as discrete gradients of \mbox{$\alpha\R$-valued} harmonic functions on the corresponding T-graph.

\begin{proposition} \label{prop:t-hol=grad}
Let~$\alpha\in\mathbb{T}$ and $\cT$ be a t-embedding. The following holds:

\smallskip

\noindent (i) If $F_\frw$ be a t-white-holomorphic function defined on (a piece of) $\cT$, then its primitive~$\rI_{\alpha}[F_\frw]$ is harmonic on the \mbox{T-graph~$\cT+\alpha^2\cO$} with respect to the directed random walk described above.
Similarly, the primitives~$\rI_{\alpha\R}[F_\frb]$ of t-black-holomorphic functions are harmonic on the T-graphs~$\cT+\alpha^2\overline{\cO}$.

\smallskip

\noindent (ii) Vice versa, if~$H$ is an $\alpha\R$-valued harmonic function on the T-graph \mbox{$\cT+\alpha^2\cO$} (resp., $\cT+\alpha^2\overline{\cO}$), then its discrete gradient~$\rD[H]$ defines the values~$F_\frw^\tb$ (resp., $F_\frb^\tw$) of a t-white- (resp., t-black-) holomorphic function on~$\cT$.
\end{proposition}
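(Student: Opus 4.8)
The plan is to prove everything for t-white-holomorphic functions and the T-graph $\cT+\alpha^2\cO$; the t-black case and the T-graph $\cT+\alpha^2\overline{\cO}$ then follow by the symmetry $\cO\leftrightarrow\overline{\cO}$, $b\leftrightarrow w$ (complex conjugation of all formulas below). The whole argument rests on a single computation: expressing the increments of $\rI_{\alpha\R}[F_\frw]$ along the segments of the T-graph directly in terms of $F_\frw^\tb$. The two inputs I would use are the closed-form representation of Proposition~\ref{prop:closed-forms-single}, i.e.\ $d\rI_\C[F_\frw]=2F_\frw^\tb\,d\cT$ on the edges of $\cT$ (the primitive of \eqref{eq:closed-forms-plane}, read off \eqref{eq:closed-forms-edges}), and the fact that a black face $b$ collapses under $z\mapsto z+\alpha^2\cO(z)$ to a segment of direction $\alpha\overline{\eta}_b$, since there $d\cO=\overline{\eta}_b^2\,d\bar z$ and the T-graph displacement equals $dz+\alpha^2\overline{\eta}_b^2\,d\bar z\in\alpha\overline{\eta}_b\R$. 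Throughout, "$dz$" in the definition of $\rD[H]$ refers to this T-graph displacement $dz+\alpha^2 d\cO$, which is why $\rD[H](b)$ is a single number per black face.

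For part (i), I set $H:=\rI_{\alpha\R}[F_\frw]=\alpha\Re(\overline{\alpha}\,\rI_\C[F_\frw])$ (see \eqref{eq:I-alpha-def}) and compute its increment along the edges collapsing into the segment $(\cT+\alpha^2\cO)(b)$. Using the standard-boundary/projection property $F_\frw^\tb(b)\in\eta_b\R$, I write $F_\frw^\tb(b)=\eta_b s$ with $s\in\R$; then $d\rI_\C[F_\frw]=2\eta_b s\,dz$ and a one-line algebraic manipulation gives
\[
dH\ =\ 2s\,\alpha\Re(\overline{\alpha}\eta_b\,dz)\ =\ s\eta_b\big(dz+\alpha^2\overline{\eta}_b^2\,d\bar z\big)\ =\ F_\frw^\tb(b)\,\big(dz+\alpha^2\,d\cO\big),
\]
where in the last step I used $d\cO=\overline{\eta}_b^2\,d\bar z$ on $b$. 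Thus along each black segment $dH$ is a constant ($=F_\frw^\tb(b)$) multiple of the T-graph displacement, so $H$ is \emph{affine} on every segment $(\cT+\alpha^2\cO)(b)$. Affinity along the segments is exactly harmonicity for the directed random walk, since the martingale condition makes the value at an interior vertex the corresponding convex combination of the two endpoint values, which an affine function satisfies. The displayed identity simultaneously shows $\rD[H](b)=F_\frw^\tb(b)$, i.e.\ the gradient recovers the original function.

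For part (ii) I would reverse these steps. Given an $\alpha\R$-valued harmonic $H$, harmonicity forces $H$ to be affine on each segment $(\cT+\alpha^2\cO)(b)$, so $\rD[H](b)$ is well defined; and because $dH\in\alpha\R$ while the segment direction is $\alpha\overline{\eta}_b$, the ratio automatically lies in $\alpha\R/(\alpha\overline{\eta}_b\R)=\eta_b\R$, which is the required standard-boundary condition. It remains to verify the closedness condition $\sum_{b\sim w}\rD[H](b)\,d\cT(bw^*)=0$ at every inner white face $w$. This follows from single-valuedness of $H$: summing $dH=\rD[H](b)(dz+\alpha^2 d\cO)$ around the closed white polygon $(1+\alpha^2\eta_w^2)\cT(w)$ gives $0$, and since each edge displacement equals $(1+\alpha^2\eta_w^2)\,d\cT(bw^*)$ one factors out the nonzero scalar $(1+\alpha^2\eta_w^2)$. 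By (the t-white analogue of) Definition~\ref{def:t-hol-fake} and Lemma~\ref{lem:t-hol-true}, $\rD[H]$ is then the value $F_\frw^\tb$ of a t-white-holomorphic function.

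The hard part will be the \emph{degenerate} values of $\alpha$, namely $\alpha^2=-\overline{\eta}_w^2$ for some white face, where the polygon $(1+\alpha^2\eta_w^2)\cT(w)$ collapses and the factoring step in part (ii) breaks down (dually, some segments of $\cT+\alpha^2\overline{\cO}$ degenerate). There I would argue exactly as indicated in the text and in~\cite{CLR1}: establish the equivalence on the open dense set of generic $\alpha$ where all relevant T-graph faces are non-degenerate, and pass to all $\alpha\in\mathbb{T}$ by continuity, using that the directed walk and both maps $F_\frw\mapsto\rI_{\alpha\R}[F_\frw]$ and $H\mapsto\rD[H]$ depend continuously on $\alpha$. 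A secondary, purely bookkeeping point is that for non-triangular white faces the closedness condition should be read on a white splitting $\cT^\tw_\spl$; this does not affect the black-face values $F_\frw^\tb$ that constitute $\rD[H]$, and only matters if one additionally wants the full set of `true' values $F_\frw^\tw$ of Lemma~\ref{lem:t-hol-true}.
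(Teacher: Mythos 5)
Your argument is correct and is essentially the proof that the paper delegates to \cite[Section~4.2]{CLR1}: the whole content is the identity $dH=F_\frw^\tb(b)\,d(\cT+\alpha^2\cO)$ along each black segment, which you verify correctly, and the two directions then follow from ``affine along segments = harmonic for the martingale walk'' and from single-valuedness of $H$ around each white face. The one soft spot you flag yourself --- degenerate $\alpha$ in part (ii), where one cannot literally perturb $\alpha$ while keeping $H$ fixed --- is real and is precisely what the degenerate-T-graph machinery of \cite{CLR1} is set up to handle, so your proposal matches the intended argument.
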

\begin{proof}
See~\cite[Section~4.2]{CLR1}.
\end{proof}

{It is clear that discrete harmonic functions~$H$ on T-graphs satisfy the \emph{maximum principle}: the maximum of~$H$ in a given subdomain of a T-graph is attained at the boundary. Since the transforms~$\cT(z)\mapsto (\cT+\alpha^2\cO)(z)$ and \mbox{$\cT(z)\mapsto (\cT+\overline{\alpha}^2\cO)(z)$} are known not to create overlaps (e.g., see~\cite[Proposition~4.3]{CLR1}), the maximum principle for the primitives~$\rI_{\alpha\R}[F]$ of t-holomorphic functions can be equivalently formulated on the t-embedding~$\cT$ itself instead of passing to T-graphs~$\cT+\alpha^2\cO$ or~$\cT+\alpha^2\overline{\cO}$.

A similar discussion applies to regularity properties of the functions~$\rI_{\alpha\R}[F]$: if~$\cT$ satisfies the assumption~{\LipKd}, then
\begin{equation}
\label{eq:distortion}
1-\kappa\ \le\ \frac{|(\cT\!+\!\alpha^2\cO)(v')-(\cT\!+\!\alpha^2\cO)(v)|}{|\cT(v')-\cT(v)|}\ \le\ 1+\kappa
\end{equation}
for all~$v',v$ with $|\cT(v')-\cT(v)|\ge \delta$. Thus, regularity properties of~$\rI_{\alpha\R}[F]$ on scales above~$\delta$ can be equivalently formulated on t-embeddings or T-graphs.}

\begin{proposition}
\label{prop:H-Holder} There exist constants $\beta=\beta(\kappa)>0$, $C=C(\kappa)>0$ and~$\rho_0=\rho_0(\kappa)>0$ such that for all t-embeddings~$\cT$ satisfying the assumption~{\LipKd} and for all~$\alpha\in\mathbb{T}$ the following H\"older-type estimate holds:

if~$\cT$ covers a ball~$B(v,R)$ of radius~$R>r\ge \rho_0\delta$ and~$H$ is a discrete real-valued harmonic function on the associated T-graph~$\cT+\alpha^2\cO$, then
\[
\osc_{B(v,r)}H\ \le\ C(r/R)^\beta\osc_{B(v,R)}H,
\]
where~$\osc_V H:=\max_V H-\min_V H$.
\end{proposition}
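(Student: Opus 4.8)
The plan is to deduce this H\"older estimate from an iterated \emph{single-scale oscillation reduction}, exploiting the probabilistic description of harmonic functions on the T-graph $\cT+\alpha^2\cO$ furnished by the directed random walk $X_t$: since $H$ is harmonic, $H(X_t)$ is a martingale and hence $H(x)=\mathbb{E}_x[H(X_\tau)]$ for the exit time $\tau$ from any region covered by $\cT$. Throughout, the assumption \LipKd\ together with the distortion bound~\eqref{eq:distortion} lets me treat $\cT+\alpha^2\cO$ as Euclidean-like on every scale above $\rho_0\delta$: distances on $\cT$ and on the T-graph agree up to a factor $1\pm\kappa$ once they exceed $\delta$, while a single jump of $X_t$ has size $O(\delta)$ because the maximal diameter of faces is at most~$\delta$.

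\emph{Diffusive exit estimates.} Using that $\Re X_t$ and $\Im X_t$ are martingales with $\var\mathrm{Tr}\,X_t=t$, I would first fix $\lambda=\lambda(\kappa)>1$ and show that for $\rho\ge\rho_0\delta$ the walk started at $x\in B(v,\rho)$ exits $B(v,\lambda\rho)$ in a controlled time: optional stopping applied to the coordinate martingales and to the quadratic test function $|z-v|^2$ (whose increments along the walk are governed by the variance growth) gives $c\rho^2\le\mathbb{E}_x[\tau]\le C(\lambda\rho)^2$. Here~\eqref{eq:distortion} is what makes these quantities comparable to their Euclidean counterparts above scale $\delta$, so that the walk is genuinely diffusive and reaches the boundary sphere without collapsing into a lower-dimensional set.

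\emph{Coupling and iteration.} The core step is to produce $p_0=p_0(\kappa)>0$ such that two walks started at any $x_1,x_2\in B(v,\rho)$ can be coupled to exit $B(v,\lambda\rho)$ at a common vertex with probability at least $p_0$. I would build this by a dyadic chaining over the $O(\log(\rho/\delta))$ scales between $\delta$ and $\rho$: at each scale the diffusivity of the previous step yields a uniformly positive chance of halving the distance between the two copies before either exits, and once they are within $O(\delta)$ they coincide after $O(1)$ further steps. The coupling then gives
\[
\osc_{B(v,\rho)}H\ \le\ (1-p_0)\,\osc_{B(v,\lambda\rho)}H,
\]
since $|H(x_1)-H(x_2)|\le(1-p_0)\osc_{B(v,\lambda\rho)}H$ by the martingale identity and the maximum principle. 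Iterating this reduction across the geometric sequence of radii from $r$ up to $R$ (all kept above $\rho_0\delta$) converts the factors $(1-p_0)$ into the power law with $\beta=\log(1/(1-p_0))/\log\lambda$, which is the claimed estimate.

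I expect the coupling to be the main obstacle. The T-graph may contain arbitrarily thin or degenerate faces and the walk is directed, so no uniform ellipticity is available microscopically; the only quantitative inputs are \LipKd, the martingale structure, and the normalization $\var\mathrm{Tr}\,X_t=t$. The chaining must therefore be arranged so that every estimate is invoked at a scale $\ge\rho_0\delta$ where~\eqref{eq:distortion} holds, and one must verify --- uniformly in $\alpha\in\mathbb{T}$ and in the microscopic geometry --- that the walk actually crosses each annulus rather than getting trapped in a thin region, and that the meeting probability at the bottom scale is bounded below. Establishing this robustly, without any control on individual angles or edge lengths, is the delicate point.
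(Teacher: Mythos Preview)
Your overall scaffolding --- reduce the oscillation by a fixed factor on each dyadic scale and iterate --- is the right one, and it matches the paper's route. The gap is in how you obtain the single-scale reduction. The exit-time bounds you extract from the martingale structure and the normalization $\var\mathrm{Tr}\,X_t=t$ do \emph{not} by themselves imply that two coupled walks can halve their distance with uniformly positive probability, nor that the walk spreads in all directions. A martingale walk can satisfy $\var\mathrm{Tr}\,X_t=t$ while putting essentially all its variance into a single direction at every scale; in that degenerate regime there is no oscillation reduction at all. What rules this out is a genuine \emph{uniform ellipticity} estimate for the T-graph walk above scale~$\delta$, and this is exactly the content the paper imports from~\cite[Proposition~6.4]{CLR1}: it is there that the assumption~{\LipKd} is converted into a lower bound on the variance of~$X_t$ in \emph{every} direction, not just on the trace. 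That step is the substantive one and it does not follow from your diffusive exit-time bounds. Your bottom-scale claim ``once they are within $O(\delta)$ they coincide after $O(1)$ further steps'' is also unjustified: the T-graph walk is directed, so two copies at nearby but distinct vertices have no reason ever to merge.

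In fact the paper's argument (via~\cite{CLR1}) bypasses coupling entirely. Once one has uniform ellipticity above scale~$\delta$, one gets \emph{uniform crossing estimates}: from any $x\in B(v,\rho)$ the walk hits any fixed half of~$\partial B(v,\lambda\rho)$ with probability at least $p_0=p_0(\kappa)$. This already gives $\osc_{B(v,\rho)}H\le(1-p_0)\osc_{B(v,\lambda\rho)}H$ by the standard hitting-measure argument (take the half of the boundary on which $H$ is below its median), and iteration yields the H\"older exponent. So the missing idea is not the coupling but the ellipticity/crossing input derived from~{\LipKd}; I would redirect your effort there and drop the two-walk construction.
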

\begin{proof} See~\cite[Proposition~6.13]{CLR1} and~\eqref{eq:distortion}.
\end{proof}
The proof of~\cite[Proposition~6.13]{CLR1} is based on a uniform -- above scale~$\delta$ from the assumption~{\LipKd} -- ellipticity estimate for the random walks on T-graphs discussed above; see~\cite[Proposition~6.4]{CLR1}. This also implies the so-called uniform crossing estimates for these random walks above scale~$\delta$ and hence the discrete Harnack principle.
\begin{proposition}\label{prop:Harnack} In the same setup, for each~$\rho<1$ there exists a constant~$c(\rho)=c(\rho,\kappa)>0$ such that the following holds: if a positive function~$H$ is defined on vertices of~$\cT$ in the interior of a disc~$B(v_0,r)$ and is harmonic on the associated T-graph~$\cT+\alpha^2\cO$ (or, similarly, on~$\cT+\alpha^2\overline{\cO}$), then
\[
\min\nolimits_{B(v_0,\rho r)}H\ \ge\ c(\rho)\cdot \max\nolimits_{B(v_0,\rho r)}H
\]
provided that~$(1-\rho)r\ge\mathrm{cst}\cdot\delta$ for a constant~$\mathrm{cst}$ depending on~$\kappa$ only.
\end{proposition}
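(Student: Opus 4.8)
The plan is to deduce the two-sided Harnack inequality from the \emph{uniform crossing estimates} for the directed random walk~$X_t$ on the T-graph~$\cT+\alpha^2\cO$ (or~$\cT+\alpha^2\overline{\cO}$), in the same spirit as the companion oscillation bound of Proposition~\ref{prop:H-Holder}. The only quantitative input is the uniform ellipticity of~$X_t$ above scale~$\delta$ established in~\cite[Proposition~6.4]{CLR1}; by the distortion estimate~\eqref{eq:distortion} this ellipticity constant, and hence all constants below, depend only on~$\kappa$ and not on~$\alpha$, on the embedding~$\cT^\delta$, or on~$\delta$ itself. Throughout I would work directly on the T-graph, where $H(X_t)$ is a martingale and optional stopping applies; since the transforms $\cT(z)\mapsto(\cT+\alpha^2\cO)(z)$ create no overlaps, the resulting statements transfer back to~$\cT$ verbatim on scales above~$\delta$, which is exactly why the hypothesis $(1-\rho)r\ge\mathrm{cst}\cdot\delta$ is needed. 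Degenerate values of~$\alpha$ are handled by continuity, as in~\cite[Definition~5.5]{CLR1}.

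First I would record the basic crossing estimate: for any annulus $B(v,2a)\smallsetminus B(v,a)$ with $a\ge\mathrm{cst}\cdot\delta$ covered by~$\cT$, the walk started anywhere in the inner disc~$B(v,a)$ reaches any prescribed boundary arc of definite length on $\partial B(v,2a)$ before returning, with probability bounded below by $c(\kappa)>0$; equivalently, such annuli are crossed in any prescribed direction with uniformly positive probability. This is the standard consequence of uniform ellipticity (an annulus-crossing argument) and is precisely what underlies~\cite[Proposition~6.4]{CLR1}. Chaining these estimates along a bounded number $N=N(\rho)$ of overlapping balls of radius $s\asymp(1-\rho)r\ge\mathrm{cst}\cdot\delta$ joining two points $v_1,v_2\in B(v_0,\rho r)$ inside $B(v_0,r)$, I obtain a uniform lower bound
\[
\mathbb{P}_{v_1}\big[\sigma_{B(v_2,s)}<\tau_{B(v_0,r)}\big]\ \ge\ c_1(\rho,\kappa)>0,
\]
where $\sigma$ and $\tau$ denote the relevant hitting and exit times. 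Optional stopping at $\sigma\wedge\tau$ together with $H\ge 0$ then gives the one-sided ``weak Harnack'' bound $H(v_1)\ge c_1\cdot\min_{B(v_2,s)}H$.

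The remaining, genuinely nontrivial, step is to upgrade this into the full inequality $\min_{B(v_0,\rho r)}H\ge c(\rho)\max_{B(v_0,\rho r)}H$. Taking $v_1$ to be a point of minimum and $v_2$ a point of maximum, the weak bound reads $H(v_{\min})\ge c_1\min_{B(v_{\max},s)}H$, and the difficulty is precisely to replace the small-ball minimum by the value~$H(v_{\max})$ itself up to a constant---a local Harnack comparison at the smaller scale~$s$, which cannot be obtained by naive iteration. Here I would invoke the standard discrete Krylov--Safonov scheme: a \emph{growth lemma} stating that if $H\ge 0$ is harmonic on~$B(v_0,r)$ and exceeds~$1$ on a definite fraction of a sub-ball then it is bounded below on a larger concentric sub-ball, followed by the usual measure-theoretic iteration (using the trivial volume doubling of Euclidean balls) that closes this self-improvement gap. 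The growth lemma itself rests only on the crossing estimates above and on the maximum principle for harmonic functions on T-graphs recalled before Proposition~\ref{prop:H-Holder}; in particular no reversibility of~$X_t$ is used, which matters because the T-graph walk is genuinely directed.

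I expect the main obstacle to be the scale-$\delta$ bookkeeping rather than the Harnack machinery. Because faces of~$\cT^\delta$ may be arbitrarily thin, the walk~$X_t$ is elliptic only above scale~$\delta$, so one must verify that every ball, annulus, and chain appearing above has radius $\gtrsim\delta$, that the number $N(\rho)$ of chaining balls and the fraction thresholds in the growth lemma remain bounded as $\delta\to 0$, and that all estimates are uniform in $\alpha\in\mathbb{T}$. This is exactly the role of the hypothesis $(1-\rho)r\ge\mathrm{cst}\cdot\delta$ and of assumption~\LipKd\ entering through~\eqref{eq:distortion}; once this bookkeeping is in place, the conclusion follows from the standard Krylov--Safonov argument applied to the uniformly elliptic martingale~$X_t$.
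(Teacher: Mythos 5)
Your first step---the one-sided bound $H(v_1)\ge c_1(\rho,\kappa)\cdot\min_{B(v_2,s)}H$ obtained by chaining annulus crossings and applying optional stopping to the nonnegative martingale $H(X_{t\wedge\sigma\wedge\tau})$---is sound and uses exactly the toolbox that \cite{CLR1} provides. The problem is the upgrade to the full two-sided inequality, which is precisely where the content of the proposition lies and which you delegate to a ``standard discrete Krylov--Safonov scheme''. That scheme is not available off the shelf here. Its growth lemma needs the quantitative input ``if $H\ge 1$ on a subset $E$ of a ball with $|E|\ge\varepsilon|B|$, then the walk started at the centre hits $E$ before leaving a larger concentric ball with probability $\ge c(\varepsilon)$'', and this does \emph{not} follow from the annulus-crossing estimates plus the maximum principle alone: its proof requires occupation-time (mean-value) bounds for the directed walk with respect to the same measure used to quantify ``a definite fraction'', and on T-graphs whose faces may be arbitrarily thin there is no a priori comparison between Lebesgue measure, vertex-counting measure and the occupation measure of the walk, even above scale~$\delta$. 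Nothing of this kind is established in \cite{CLR1}, so as written the decisive step is a gap rather than a routine verification.

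The gap can be closed with the two ingredients you already have, and this is in substance how the Harnack principle is derived from crossing estimates in this literature (the paper itself gives no independent argument and simply cites \cite[Proposition~6.9]{CLR1} together with~\eqref{eq:distortion}). Set $M:=H(v_{\max})=\max_{B(v_0,\rho r)}H$ and fix $\rho<\rho'<1$. By the maximum principle on the T-graph, the connected component of the super-level set $\{H\ge M/2\}$ containing $v_{\max}$ must reach $\partial B(v_0,\rho' r)$: otherwise the maximum of $H$ over the closure of that component would be attained at vertices where $H<M/2$, contradicting $H(v_{\max})=M$. This yields a path $\gamma$ in the T-graph crossing the annulus $B(v_0,\rho' r)\smallsetminus B(v_0,\rho r)$ along which $H\ge M/2$. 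Chaining the crossing estimates around this annulus (a bounded number of overlapping rectangles, all of size $\gtrsim(1-\rho)r\ge\mathrm{cst}\cdot\delta$) shows that the walk started from any $v\in B(v_0,\rho r)$ winds once around the annulus before exiting $B(v_0,\rho' r)$ with probability at least $c(\rho,\kappa)>0$, and by planarity such a winding trajectory must visit a vertex of~$\gamma$. Optional stopping then gives $H(v)\ge c(\rho,\kappa)\,M/2$ directly, with no measure-theoretic iteration; this is the argument you should substitute for the Krylov--Safonov reference.
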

\begin{proof}
See~\cite[Proposition~6.9]{CLR1} and~\eqref{eq:distortion}.
\end{proof}
Since t-holomorphic functions are discrete derivatives of harmonic functions on T-graphs one can wonder whether it is possible to improve the H\"older-type regularity estimate provided by Proposition~\ref{prop:H-Holder} to a similar Lipschitz-type estimate, maybe under additional assumption on t-embeddings under consideration. We discuss this question in the next section.

\subsection{A priori regularity theory for t-holomorphic functions}\label{sub:regularity}
In this section we recall several properties of t-holomorphic functions on t-embeddings developed in~\cite{CLR1}. Summing it up we call them the a priori regularity theory for t-holomorphic functions. It consists of two key parts:
\begin{itemize}
\item Bounded t-holomorphic functions on t-embeddings~$\cT^\delta$ satisfying the assumption~{\LipKd} are (H\"older) equicontinuous and thus form pre-compact families; see Theorem~\ref{thm:F-precomp} below.
\item If both~{\LipKd} and~{\ExpFat} hold, then t-holomorphic functions admit an a priori Harnack--type estimate via their harmonic primitives~$\rI_{\alpha\R}$ discussed in the previous section. In particular, a uniform bound on the oscillations of these primitives implies a uniform bound on t-holomorphic functions themselves; see Theorem~\ref{thm:F-via-H} below.
\end{itemize}
We also need a preliminary lemma.
\begin{lemma}\label{lem:max-t-hol} Let~$F_\frb$ be a t-black-holomorphic function defined on (a part of) a t-embedding $\cT$. For each~$\alpha\in\mathbb{T}$ there exists a directed nearest-neighbor random walk on black faces of~$\cT$ such that~$\Pr(F^\bullet_\frb(\cdot),\alpha\R)$ is a martingale with respect to this random walk. In particular, the function~$\Re[\overline{\alpha}F^\tb_\frb]$ satisfies the discrete maximum principle for each~$\alpha\in \mathbb{T}$ and hence the same is true for~$|F^\tb_\frb|$\,. Similar statements hold for t-white-holomorphic functions.

Moreover, this maximum principle remains true if one includes into the consideration the values of t-holomorphic functions at~$\partial_{\mathrm{out}}\cT$ defined by~\eqref{eq:F-pa-out-def}.
\end{lemma}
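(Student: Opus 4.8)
The plan is to deduce the maximum principle for the `true' complex values $F^\tb_\frb$ from the harmonicity of their primitives on the associated T-graph, and then to pass to the \emph{dual} random walk on the tiles of that T-graph, which are exactly the black faces. Throughout I fix the direction $\alpha\in\mathbb{T}$ and, using a black splitting $\cT^\tb_\spl$ as in Lemma~\ref{lem:t-hol-true}, I assume without loss of generality that all black faces are triangles, so that the complex value $F^\tb_\frb(b)$ is well defined on every black face $b$; the $2$-gons created by the splitting carry zero increments and are harmless.

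First I would record the geometry of the primitive. By Proposition~\ref{prop:closed-forms-single} the form $F^\tb_\frb\,dz+\overline{F^\tb_\frb}\,d\cO$ is closed, and on a black face $b$ one has $d\cO=\overline{\eta}_b^2\,d\bar z$ by~\eqref{eq:def-O}; hence $\rI_\C[F_\frb]$ is real-affine on each black triangle, with slope encoded by the single complex number $F^\tb_\frb(b)$. Consequently the $\alpha\R$-valued primitive $\rI_{\alpha\R}[F_\frb]=\Pr(\rI_\C[F_\frb],\alpha\R)$ from~\eqref{eq:I-alpha-def} is affine on every black tile, and the relevant scalar invariant of its tile-slope is precisely $\Pr(F^\tb_\frb(b),\alpha\R)$. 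By Proposition~\ref{prop:t-hol=grad}(i) this primitive is harmonic on the T-graph $\cT+\alpha^2\overline{\cO}$ for the directed random walk on its vertices.

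The key step is to transfer harmonicity of $\rI_{\alpha\R}[F_\frb]$ into a martingale property for $\Pr(F^\tb_\frb,\alpha\R)$. In $\cT+\alpha^2\overline{\cO}$ the white faces degenerate to segments while the black faces remain non-degenerate convex polygons tiling the covered region; the black faces are thus the tiles, and two of them are neighbours when they share a boundary segment lying on a white segment. I would invoke the standard T-graph duality (see~\cite[Sections~4--5]{CLR1}): the gradient of a function that is harmonic on the vertices and affine on the tiles is itself harmonic for the dual directed walk on the tiles. Applied to $\rI_{\alpha\R}[F_\frb]$, this yields a directed nearest-neighbour random walk on the black faces for which the tile-slopes, and in particular $\Pr(F^\tb_\frb(\cdot),\alpha\R)$, form a martingale. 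The maximum principle for $\Re[\overline{\alpha}F^\tb_\frb]=\overline{\alpha}\,\Pr(F^\tb_\frb,\alpha\R)$ is then immediate, since the value at each black face is a convex combination of the values at its neighbours. Finally, as $|F^\tb_\frb(b)|=\max_{\alpha\in\mathbb{T}}\Re[\overline{\alpha}F^\tb_\frb(b)]$, an interior maximum of $|F^\tb_\frb|$ at $b_0$ would also be an interior maximum of $\Re[\overline{\alpha_0}F^\tb_\frb]$ for the maximizing $\alpha_0$, contradicting the previous maximum principle; this gives the maximum principle for $|F^\tb_\frb|$. The t-white-holomorphic case is identical after exchanging the roles of $\cO$ and $\overline{\cO}$.

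I expect the construction of the dual tile walk, together with the \emph{positivity} of its transition probabilities, to be the main obstacle. Positivity should follow from the proper (non-overlapping, convex) embedding and the angle condition, which force each white-segment endpoint to lie in the relative interior of the black edges meeting it (the defining `T'-property), so that the convex-combination weights are strictly positive; one must also check that this survives the limit in $\alpha$ through the directions for which some T-graph faces degenerate, which is covered by the continuity of the walk in $\alpha$ recalled after~\eqref{eq:def-O} (\cite[Section~4.1, Definition~5.5]{CLR1}). For the last assertion I would extend the walk to the outer faces $\partial_\mathrm{out}\cT$ by turning the two tiles $b^\pm_{\mathrm{out},k}$, whose complex values are prescribed in~\eqref{eq:F-pa-out-def} through one-sided projections, into absorbing boundary states; the boundary condition $F^\tw_\frb\equiv0$ on $W_\mathrm{out}$ makes these one-sided projections consistent with the martingale identity across the adjacent white segments, so that the maximum principle persists once the values at $\partial_\mathrm{out}\cT$ are included.
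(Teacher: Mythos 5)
Your proposal is correct and follows essentially the same route as the paper: the paper's proof is a citation to~\cite[Section~4.3 and Proposition~4.17]{CLR1}, where the directed walk on black faces is exactly the backward (dual, tile-level) random walk on the T-graph~$\cT+\alpha^2\overline{\cO}$ that you reconstruct, with positivity of the transition weights coming from the T-graph structure and the boundary extension handled by the consistency of~\eqref{eq:F-pa-out-def} just as you describe.
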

\begin{proof}
See~\cite[Section~4.3]{CLR1}. Rather surprisingly, these random walks can be thought of as backward random walks on the T-graphs discussed in the previous section; see~\cite[Proposition~4.17]{CLR1} for the exact statement.

The proof of the maximum principle easily extends to~$\partial_\mathrm{out}\cT$ since~\eqref{eq:F-pa-out-def} gives a consistent definition of `true' complex values of a t-holomorphic function around each boundary vertex of~$\cT$.
\end{proof}

\begin{theorem} \label{thm:F-precomp}
Let~$F^\delta_\frb$, $\delta={\delta_m}\to 0$, be a sequence of t-black- (or, similarly, t-white-) holomorphic functions defined on t-embeddings~$\cT^\delta$ that contain a fixed open set~$U\subset\C$ and satisfy the assumption~{\LipKd} with~$\kappa=\kappa(K)<1$ on compact subsets~\mbox{$K\subset U$}. Assume that the functions~$F^{\tb,\delta}_\frb$ are uniformly bounded on compact subsets of~$U$. Then, the family~$\{F^{\tb,\delta}_\frb\}$ is pre-compact in the topology of the uniform convergence on compact subsets of~$U$.

If we additionally assume that the origami maps~$\cO^\delta$ converge (uniformly on compacts) to a function~$\vartheta:U\to \mathbb{C}$ as~$\delta \to 0$, then, for each subsequential limit~$\fb$ (resp.,~$\fw$) of the functions~$F^{\tb,\delta}_\frb$ (resp,~$F^{\tw,\delta}_\frw$), the differential form
\begin{equation}
\label{eq:closed-forms}
\fb(z)dz+\overline{\fb(z)}d\vartheta(z)\quad \text{(\ resp.,\ \ } \fw(z)dz+\overline{\fw(z)}d\overline{\vartheta(z)}\ \text{)}
\end{equation}
is closed.
\end{theorem}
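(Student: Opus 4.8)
The plan is to establish the two assertions in turn. For the pre-compactness I would run an Arzel\`a--Ascoli argument. First, the hypothesis bounds the `true' complex values $F^{\tb,\delta}_\frb$ on every compact $K\subset U$, and since $F^{\tb,\delta}_\frb(w)\in\eta_w\R$ one has $|F^{\tw,\delta}_\frb|\le|F^{\tb,\delta}_\frb|$, so the projected values are uniformly bounded as well (this is the easy direction of Remark~\ref{rem:F=O(1)-equiv}). The decisive step is to upgrade boundedness to equicontinuity, and for this I would apply the regularity theory directly to $F$ rather than to its primitive. By Lemma~\ref{lem:max-t-hol}, for each $\alpha\in\mathbb{T}$ the projection $\Pr(F^{\tb,\delta}_\frb,\alpha\R)$ is harmonic for a directed random walk on black faces which, as noted there, is a backward random walk on the T-graph and hence enjoys the same uniform ellipticity above scale $\delta$ that underlies Proposition~\ref{prop:H-Holder}. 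Consequently each such projection obeys the H\"older oscillation estimate of Proposition~\ref{prop:H-Holder} on scales $r\ge\rho_0\delta$; combining the bounds for two non-parallel directions $\alpha$ recovers the full complex value $F^{\tb,\delta}_\frb$ and yields its equicontinuity at all scales exceeding $\rho_0\delta$. Since $\delta=\delta_m\to 0$, a standard diagonal argument then extracts subsequences converging uniformly on compact subsets of $U$, which is the asserted pre-compactness.

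For the closedness of the limiting form I would argue at the level of piecewise-constant differential forms. By Proposition~\ref{prop:closed-forms-single} the form $\omega^\delta:=F^{\tb,\delta}_\frb(z)\,dz+\overline{F^{\tb,\delta}_\frb(z)}\,d\cO^\delta(z)$ extends to a genuinely closed piecewise-constant $1$-form on the region covered by $\cT^\delta$, so in particular $d\omega^\delta=0$ in the sense of distributions. Working along a subsequence on which $F^{\tb,\delta}_\frb\to\fb$ uniformly on compacts (supplied by the first part) and using $\cO^\delta\to\vartheta$ uniformly on compacts, I claim $\omega^\delta\to\omega:=\fb\,dz+\overline{\fb}\,d\vartheta$ in $\mathcal{D}'$. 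The $dz$-part is immediate from uniform convergence of $F^{\tb,\delta}_\frb$. For the $d\cO^\delta$-part one reads it as a product of $\overline{F^{\tb,\delta}_\frb}$, which converges \emph{strongly} (uniformly, hence in $L^2_{\mathrm{loc}}$), with $\partial\cO^\delta$, which is bounded in $L^\infty$ by the $1$-Lipschitz property of $\cO^\delta$ and converges \emph{weakly-}$*$ to $\partial\vartheta$ because $\cO^\delta\to\vartheta$ uniformly. The standard strong$\times$weak lemma gives $\overline{F^{\tb,\delta}_\frb}\,\partial\cO^\delta\to\overline{\fb}\,\partial\vartheta$ in $\mathcal{D}'$, whence $\omega^\delta\to\omega$. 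Since the exterior derivative is continuous on distributions and each $\omega^\delta$ is closed, $d\omega=\lim_\delta d\omega^\delta=0$, i.e.\ $\omega$ is closed; the analogous statement for $\fw$ follows by the same reasoning with $d\cO^\delta$ replaced by $d\overline{\cO^\delta}$.

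The main obstacle is the equicontinuity step in the first part: boundedness of the harmonic primitive $\rI_{\alpha\R}[F]$ only yields \emph{H\"older} control of the primitive, which is strictly weaker than control of its discrete gradient $F$, and no genuine discrete Lipschitz or $C^{1,\alpha}$ estimate for harmonic functions on T-graphs is available. The point that resolves this is that $F$ \emph{itself} satisfies a maximum principle (Lemma~\ref{lem:max-t-hol}) and is harmonic for a uniformly elliptic walk on the T-graph, so Proposition~\ref{prop:H-Holder} can be invoked for $F$ directly. In the second part the only delicate issue is that $d\cO^\delta$ does \emph{not} converge strongly---the origami gradients may oscillate at small scales while $|\nabla\cO^\delta|$ stays close to $1$---which is exactly why the passage to the limit must be organized as a strong$\times$weak product of distributions rather than as a naive limit in a Riemann--Stieltjes integral.
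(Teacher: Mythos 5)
Your proposal is correct and follows essentially the same route as the paper, whose proof is a two-line citation to~\cite{CLR1}: equicontinuity of~$F^{\tb,\delta}_\frb$ above scale~$\mathrm{cst}(\kappa)\delta$ comes from the uniform ellipticity/crossing estimates for the \emph{backward} random walks on T-graphs for which the projections~$\Re[\overline{\alpha}F^{\tb,\delta}_\frb]$ are harmonic (Lemma~\ref{lem:max-t-hol}), followed by a version of Arzel\`a--Ascoli, and closedness of the limit form is obtained by passing to the limit in the closed piecewise-constant forms of Proposition~\ref{prop:closed-forms-single}. Your strong$\times$weak-$*$ treatment of the term~$\overline{F^{\tb,\delta}_\frb}\,d\cO^\delta$ is a correct and welcome elaboration of the step the paper leaves implicit.
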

\begin{proof} See~\cite[Corollary~6.14(i)]{CLR1} and~\cite[Proposition~6.15]{CLR1}. The main ingredient of the proof is that, under the assumption~{\LipKd}, the backward random walks on T-graphs mentioned in the proof of Lemma~\ref{lem:max-t-hol} satisfy uniform crossing estimates on all scales above~\mbox{$\mathrm{cst}(\kappa)\delta$}; see~\cite[Section~6.3]{CLR1}. Therefore, the functions~$F^\delta_\frb$ are H\"older equicontinuous on scales above~$\mathrm{cst}(\kappa)\delta$ and thus the pre-compactness of the family~$\{F^\delta_\frb\}$ follows from a version of the Arzel\'a--Ascoli theorem.

The fact that differential forms~\eqref{eq:closed-forms} are closed follows from a similar fact for their discrete versions discussed in Proposition~\ref{prop:closed-forms-single}.
\end{proof}

\begin{corollary} \label{cor:Fpmpm-hol-in-zeta}
In the setup of {Theorem~\ref{thm:main-GFF},}
let~$U_1,U_2$ be disjoint open subsets of~$\Omega_\xi$ and assume that the complexified dimer coupling functions~$\FF{\pm\pm}_{\cT^\delta}$ (see Proposition~\ref{prop:Fpmpm-def}) are uniformly bounded on compact subsets of~$U_1\times U_2$ as~$\delta={\delta_m}\to 0$. Then, the families~$\{\FF{\pm\pm}_{\cT^\delta}\}$ are precompact in the topology of uniform convergence on compacts and the following holds for each subsequential limits~$\ff{\sspm\sspm}:U_1\times U_2\to\C$ of these functions:\\[4pt]
(i)\phantom{ii} $\ff{\ssm\ssm}(z_1,z_2)=\overline{\ff{\ssp\ssp}(z_1,z_2)}$ and~$\ff{\ssp\ssm}(z_1,z_2)=\overline{\ff{\ssm\ssp}(z_1,z_2)}$\,;\\[4pt]
(ii)\phantom{i} $\ff{\ssp\sspm}(z_1,z_2)dz_1+\ff{\ssm\ssmp}(z_1,z_2)d\vartheta(z_1)$ are closed forms for each~$z_2\in U_2$;\\[4pt]
(iii) $\ff{\sspm\ssp}(z_1,z_2)dz_2+\ff{\ssmp\ssm}(z_1,z_2)d\vartheta(z_2)$ are closed forms for each~$z_1\in U_1$.
\end{corollary}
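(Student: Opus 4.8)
The plan is to extract all three assertions from two ingredients already available: Theorem~\ref{thm:F-precomp} (precompactness together with closedness of the limiting forms), applied separately in each of the two variables, and the algebraic identities of Proposition~\ref{prop:Fpmpm-def}. Throughout I would exploit the disjointness $U_1\cap U_2=\emptyset$: for $\delta$ small enough the faces of $\cT^\delta$ approximating a point of $U_1$ and a point of $U_2$ are never adjacent, so the $\FF{\pm\pm}_{\cT^\delta}$ are genuinely defined on $U_1\times U_2$, and the puncture appearing in Proposition~\ref{prop:Fpmpm-def}(iii) always lies outside the region where I apply the regularity theory.

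First I would prove precompactness. Fixing a white face $\wc$ that approximates a point $z_2\in U_2$, Proposition~\ref{prop:Fpmpm-def}(iii) says that for every $\eta\in\C$ the combination $\tfrac12(\overline{\eta}\,\FF{++}(\,\cdot\,,\wc)+\eta\,\FF{+-}(\,\cdot\,,\wc))$ is t-black-holomorphic on the part of $\cT^\delta$ covering $U_1$ (its only puncture sits at $\wc\in U_2$). Taking $\eta=1$ and $\eta=i$ and solving the resulting $2\times2$ linear system writes both $\FF{++}(\,\cdot\,,\wc)$ and $\FF{+-}(\,\cdot\,,\wc)$ as fixed complex-linear combinations of two t-black-holomorphic functions, which are uniformly bounded by hypothesis; Theorem~\ref{thm:F-precomp} then makes them H\"older-equicontinuous on compacts of $U_1$, with a modulus depending only on $\kappa$, on the compact set and on the uniform bound, hence \emph{not} on $z_2$. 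The symmetric second combination in Proposition~\ref{prop:Fpmpm-def}(iii) gives the same for $\FF{++}(\bc,\,\cdot\,)$ and $\FF{-+}(\bc,\,\cdot\,)$ in the second variable, uniformly in $z_1$, and the conjugation relations of Proposition~\ref{prop:Fpmpm-def}(i) transfer all of this to $\FF{--}$ and $\FF{+-}$ (resp.\ $\FF{-+}$). Adding the two one-variable moduli by the triangle inequality yields joint equicontinuity on compacts of $U_1\times U_2$, so Arzel\'a--Ascoli delivers the claimed precompactness of all four families.

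Assertion (i) I would obtain at once: the identities $\FF{--}=\overline{\FF{++}}$ and $\FF{+-}=\overline{\FF{-+}}$ hold along $\cT^\delta$ for every $\delta$ by Proposition~\ref{prop:Fpmpm-def}(i), and uniform convergence on compacts passes them to any subsequential limit. For (ii) and (iii) I would combine Proposition~\ref{prop:Fpmpm-def}(iii) with the closedness half of Theorem~\ref{thm:F-precomp}. Fixing $z_2$ and a subsequence along which all four families converge, for each $\eta\in\C$ the function $g_\eta:=\tfrac12(\overline{\eta}\,\ff{\ssp\ssp}(\,\cdot\,,z_2)+\eta\,\ff{\ssp\ssm}(\,\cdot\,,z_2))$ is a limit of t-black-holomorphic functions, so (recall $\cO^\delta\rightrightarrows\vartheta$) the form $g_\eta\,dz_1+\overline{g_\eta}\,d\vartheta$ is closed. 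Expanding $\overline{g_\eta}$ through~(i) and collecting the coefficients of $\overline{\eta}$ and $\eta$, I would rewrite this as $\overline{\eta}\,\omega_\ssp+\eta\,\omega_\ssm$ with two $\eta$-independent forms; since it is closed for every $\eta$ and the pair $\eta\in\{1,i\}$ decouples the coefficients, each $\omega_\sspm$ is closed, and one reads off the two closed forms of~(ii). Running the same computation in the second variable, with the t-white combination from Proposition~\ref{prop:Fpmpm-def}(iii) and the white closedness $\fw\,dz+\overline{\fw}\,d\overline{\vartheta}$ (where $d\overline{\vartheta}=d\vartheta$ because $\vartheta$ is real-valued here), gives~(iii).

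The hard part is the joint precompactness, not the closedness, which is essentially formal once Theorem~\ref{thm:F-precomp} is available. That theorem only controls one variable at a time, so the decisive point is that its H\"older modulus is truly uniform in the frozen variable --- a consequence of the uniform (above scale $\delta$) crossing estimates underlying its proof --- and this uniformity is exactly what lets me upgrade two one-variable equicontinuity statements into a joint one. A secondary point that needs care is the index bookkeeping in the extraction of $\omega_\sspm$: I must keep track of which index is conjugated when $\overline{g_\eta}$ is rewritten via Proposition~\ref{prop:Fpmpm-def}(i), and verify that neither the non-adjacency restriction nor the puncture in Proposition~\ref{prop:Fpmpm-def}(iii) is felt on $U_1\times U_2$, which holds precisely because $U_1\cap U_2=\emptyset$.
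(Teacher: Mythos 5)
Your proposal is correct and follows essentially the same route as the paper: Proposition~\ref{prop:Fpmpm-def}(iii) with $\eta\in\{1,i\}$ expresses each pair $\FF{+\pm}(\,\cdot\,,\wc)$ as fixed linear combinations of uniformly bounded t-holomorphic functions, the a priori regularity theory gives an equicontinuity modulus depending only on the uniform bound and hence uniform in the frozen variable, conjugation and the symmetric argument in the second variable yield joint equicontinuity and precompactness, and (i)--(iii) pass to subsequential limits via Proposition~\ref{prop:Fpmpm-def}(i) and the closedness statement~\eqref{eq:closed-forms} of Theorem~\ref{thm:F-precomp}. One remark on bookkeeping: your $\eta$-decoupling produces the closed forms $\ff{\ssp\sspm}\,dz_1+\ff{\ssm\sspm}\,d\vartheta(z_1)$ (second index unchanged), which is exactly what the paper uses later in the proof of Proposition~\ref{prop:psipmpm-def}; the $\ssmp$ printed in items (ii)--(iii) of the corollary appears to be a typo for $\sspm$, so your version is the consistent one.
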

\begin{proof} It follows from Proposition~\ref{prop:Fpmpm-def}(iii) that for each~$\wc$ both functions
\[
\tfrac{1}{2}\big(\,\FF{++}_{\cT^\delta}(\,\cdot\,,\wc)+\FF{+-}_{\cT^\delta}(\,\cdot\,,\wc)\,\big)\quad \text{and}\quad \tfrac{i}{2}\big(\,\FF{++}_{\cT^\delta}(\,\cdot\,,\wc)-\FF{+-}_{\cT^\delta}(\,\cdot\,,\wc)\,\big)
\]
are t-black-holomorphic. Since these functions are uniformly bounded, they are equicontinuous and hence so are~$\FF{+\pm}(\,\cdot\,,\wc)$ and their conjugate~$\FF{-\mp}$; moreover, the estimate on the modulus of continuity depends only on the maximum of these functions and hence is uniform in~$\wc$. A similar argument applies to the second coordinate; together this implies that functions~$\FF{\pm\pm}$ are equicontinuous on compact subsets of~$U_1\times U_2$ and thus form a pre-compact family. The property (i) of subsequential limits inherits the same property for~$\FF{\pm\pm}$ and the properties (ii),(iii) follow from~\eqref{eq:closed-forms}.
\end{proof}

\begin{theorem} \label{thm:F-via-H}
Let~$F^\delta_\frb$, $\delta={\delta_m}\to 0$, be a sequence of t-black- (or, similarly, t-white-) holomorphic functions defined on t-embeddings~$\cT^\delta$ that contain a fixed open set~$U\subset\C$ and satisfy both assumptions~{\LipKd} and~{\ExpFat} on compacts $K\subset U$. Assume that for each~$\delta$ there exists an~$\alpha\in\mathbb{T}$ such that the oscillations~$\osc_K \rI_{\alpha\R}[F^\delta_\frb]$ are uniformly bounded for each compact $K\subset U$. Then, the functions~$F^\delta_\frb$ are uniformly bounded on compact subsets of~$U$.
\end{theorem}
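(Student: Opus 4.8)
My plan is to recast both the hypothesis and the conclusion in terms of the harmonic primitive of~$F^\delta_\frb$ and then to argue by contradiction and compactness. By Proposition~\ref{prop:t-hol=grad}, for every~$\delta$ the primitive $H^\delta:=\rI_{\alpha\R}[F^\delta_\frb]$ is an $\alpha\R$-valued harmonic function on the T-graph~$\cT^\delta+\alpha^2\overline{\cO^\delta}$ whose discrete gradient~$\rD[H^\delta]$ returns the projection values~$F^{\tw,\delta}_\frb$; the assumption is exactly that $\osc_K H^\delta$ stays bounded as~$\delta\to0$. Since both~{\LipKd} and~{\ExpFat} are in force, Remark~\ref{rem:F=O(1)-equiv} lets me pass freely between bounds on the projections~$F^{\tw,\delta}_\frb$ and bounds on the true complex values~$F^{\tb,\delta}_\frb$, so it is enough to produce a uniform bound on~$F^{\tb,\delta}_\frb$ on compact subsets of~$U$.

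I would assume the contrary on some compact~$K\Subset K_1\Subset U$. Using the maximum principle of Lemma~\ref{lem:max-t-hol} for~$|F^{\tb,\delta}_\frb|$ together with a weighting by the distance to~$\partial K_1$, one extracts a face~$b_\delta$ with $\cT(b_\delta)\to p\in K$ at which $m_\delta:=|F^{\tb,\delta}_\frb(b_\delta)|\to\infty$ while simultaneously $|F^{\tb,\delta}_\frb|\le C m_\delta$ on a ball~$B(\cT(b_\delta),r_\delta)$. After translating to~$b_\delta$ and rescaling by~$r_\delta$, and \emph{provided the extraction scale~$r_\delta$ stays well above the mesh~$\delta$} (see the last paragraph), the normalised functions $\widehat F^\delta_\frb:=F^\delta_\frb/m_\delta$ become uniformly bounded t-holomorphic functions on a fixed disc, with $|\widehat F^{\tb,\delta}_\frb|=1$ at the centre.

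By Theorem~\ref{thm:F-precomp} the family~$\{\widehat F^\delta_\frb\}$ is then precompact; passing to a subsequence and to $\alpha_*:=\lim\alpha_\delta\in\mathbb{T}$, I obtain a limit~$\fb$ with $|\fb(p)|=1$ for which the form $\fb\,dz+\overline{\fb}\,d\vartheta$ is closed. On the other hand the primitives satisfy $\osc_{K_1}\rI_{\alpha\R}[\widehat F^\delta_\frb]\le \osc_{K_1}H^\delta/m_\delta\to0$, so the $\alpha_*\R$-projection of the primitive of the limiting form is constant. Writing $\fb\,dz+\overline{\fb}\,d\vartheta=dg$ on the disc and $\Re(\overline{\alpha}_*g)\equiv\mathrm{const}$, I note that~$\fb$ is recovered pointwise and linearly from $\nabla\Re(\overline{\alpha}_*g)$, the linear map being invertible precisely because $\max_K|\nabla\vartheta|<1$ (space-likeness of~$\rS_\xi$). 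Hence $\nabla\Re(\overline{\alpha}_*g)\equiv0$ forces $\fb\equiv0$, contradicting $|\fb(p)|=1$.

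The hard part is the extraction step: to run the continuum compactness one must guarantee that the scale~$r_\delta$ on which the large values of~$F^{\tb,\delta}_\frb$ live stays far above~$\delta$, i.e. one must rule out that these large values concentrate at the microscopic scale. This is exactly what~{\ExpFat} delivers through Remark~\ref{rem:F=O(1)-equiv} (that is, \cite[Lemma~6.20]{CLR1}): it forbids macroscopic continua of exponentially flat faces in which~$F^\tb$ could blow up while the projections~$F^\tw$, and hence $\osc H$, remain bounded. Assumption~{\LipKd} plays the complementary role of supplying, via Propositions~\ref{prop:H-Holder} and~\ref{prop:Harnack}, the uniform ellipticity on all scales above~$\delta$ that underlies both the maximum-principle extraction and the precompactness. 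Equivalently, I expect one can phrase the same mechanism directly as a Harnack-type lower bound showing that a large value of~$F^\tb$ at a bulk face propagates (by the martingale property of Lemma~\ref{lem:max-t-hol} and the crossing estimates behind Theorem~\ref{thm:F-precomp}) to a macroscopic region of definite phase, which then forces a large oscillation of its primitive~$H$.
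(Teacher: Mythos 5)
The paper does not prove this statement internally: its proof is a citation to \cite[Theorem~6.17]{CLR1}. So what you are really attempting is a self-contained reproof. The continuum half of your blow-up scheme is sound: the point-picking, the normalisation, the use of Theorem~\ref{thm:F-precomp} to extract a limit $\fb$ with $|\fb(p)|=1$, and the observation that $\Re\bigl[\overline{\alpha}_*(\fb\,dz+\overline{\fb}\,d\vartheta)\bigr]\equiv 0$ forces $\fb\equiv 0$ because the real-linear map $\fb\mapsto\Re\bigl[\overline{\alpha}_*(\fb\,dz+\overline{\fb}\,d\vartheta)\bigr]$ is injective whenever the limiting origami map is $\kappa$-Lipschitz with $\kappa<1$ (which {\LipKd} guarantees for any subsequential limit, including blow-up limits). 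That part would indeed yield the contradiction.

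The genuine gap is the one you flag yourself, and your proposed repair does not close it. You need the extraction scale $r_\delta$ to satisfy $\delta/r_\delta\to 0$; the point-picking only gives $m_\delta r_\delta\to\infty$, which is compatible with $|F^{\tb,\delta}_\frb|$ blowing up like $\exp(\mathrm{cst}/\delta)$ on a cluster of faces of diameter far below $\delta$. You claim this is excluded by Remark~\ref{rem:F=O(1)-equiv} (i.e.\ \cite[Lemma~6.20]{CLR1}), but that remark runs in the wrong direction for you: it converts a uniform bound on the \emph{projections} $F^{\tw,\delta}_\frb$ into a bound on the true values $F^{\tb,\delta}_\frb$, and at this stage of your argument no bound on the projections is available. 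The hypothesis only controls $\osc_K\rI_{\alpha\R}[F^\delta_\frb]$, and the projections are recovered from this primitive as slopes along T-graph segments whose lengths can be arbitrarily small compared to $\delta$ (degenerate faces, or directions for which $|1+\alpha^2\eta^2|$ is tiny); so ``bounded oscillation of the primitive $\Rightarrow$ bounded projections'' is essentially the content of the theorem being proved, not something you may assume. This is precisely where {\ExpFat} has to do quantitative work --- bounding $F^\tb$ on exponentially-fat faces by oscillations of the primitive divided by the in-radius, and then propagating that bound across the small residual components via the maximum principle of Lemma~\ref{lem:max-t-hol} --- and none of that is carried out in your write-up. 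Your closing sentence (``I expect one can phrase the same mechanism as a Harnack-type lower bound\dots'') correctly names the missing mechanism but does not supply it, so as it stands the proof is incomplete at its hardest step.
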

\begin{proof}
See \cite[Theorem~6.17]{CLR1} and the discussion right above it.
\end{proof}
\begin{corollary} \label{cor:F-via-H}
In the setup of Theorem~\ref{thm:F-via-H} the family~$\{F^\delta_\frb\}$ is precompact in the topology of the uniform convergence on compact subsets~$K\subset U$.
\end{corollary}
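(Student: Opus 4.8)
The plan is to obtain the corollary by feeding the output of Theorem~\ref{thm:F-via-H} directly into the hypothesis of Theorem~\ref{thm:F-precomp}; no new analysis is needed beyond chaining these two statements. Indeed, the assumptions of the corollary are verbatim those of Theorem~\ref{thm:F-via-H}: the t-embeddings~$\cT^\delta$ satisfy both~{\LipKd} and~{\ExpFat} on compact subsets of~$U$, and for each~$\delta$ there is a direction~$\alpha\in\mathbb{T}$ along which the harmonic primitive~$\rI_{\alpha\R}[F^\delta_\frb]$ has oscillation uniformly bounded on every compact~$K\subset U$. First I would invoke Theorem~\ref{thm:F-via-H} to conclude that the t-holomorphic functions~$F^\delta_\frb$ are uniformly bounded on compact subsets of~$U$.

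With this uniform bound established, the second step is to apply Theorem~\ref{thm:F-precomp}. That theorem requires only the assumption~{\LipKd} (already in force) together with the uniform boundedness of the `true' complex values~$F^{\tb,\delta}_\frb$ on compacts, and its conclusion is exactly the desired pre-compactness in the topology of uniform convergence on compact subsets of~$U$. Internally, this pre-compactness comes from the H\"older equicontinuity of~$F^\delta_\frb$ on all scales above~$\mathrm{cst}(\kappa)\delta$, which follows from the uniform crossing estimates for the backward random walks on the associated T-graphs, combined with a version of the Arzel\`a--Ascoli theorem.

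The only point requiring a moment's care is matching the two notions of boundedness: Theorem~\ref{thm:F-via-H} is stated for the functions~$F^\delta_\frb$, while Theorem~\ref{thm:F-precomp} takes as input the uniform boundedness of the `true' complex values~$F^{\tb,\delta}_\frb$. Under \emph{both} assumptions~{\LipKd} and~{\ExpFat}, however, Remark~\ref{rem:F=O(1)-equiv} records that the uniform boundedness of the `true' complex values and that of their projections are equivalent on compacts; hence the output of the first step is precisely the input needed for the second. Since every ingredient is already available, I do not anticipate any genuine obstacle: the corollary is a formal consequence of combining Theorem~\ref{thm:F-via-H} with Theorem~\ref{thm:F-precomp}, with Remark~\ref{rem:F=O(1)-equiv} bridging the two boundedness conventions.
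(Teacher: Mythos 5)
Your argument is correct and is exactly the paper's proof: the corollary is obtained by combining Theorem~\ref{thm:F-via-H} (uniform boundedness) with Theorem~\ref{thm:F-precomp} (equicontinuity and pre-compactness), and your remark about reconciling the boundedness of the projections with that of the `true' complex values via Remark~\ref{rem:F=O(1)-equiv} is a sensible, if implicit in the paper, point of care.
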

\begin{proof}
This is a simple combination of Theorem~\ref{thm:F-precomp} and Theorem~\ref{thm:F-via-H}.
\end{proof}


\newcommand{\tlim}{z}
\newcommand{\olim}{\vartheta}
\newcommand{\w}{\zeta}

\newcommand\z[1]{z^{{\scriptscriptstyle [}#1{\scriptscriptstyle{]}}}}
\newcommand\oo[1]{\vartheta^{{\scriptscriptstyle [}#1{\scriptscriptstyle{]}}}}
\newcommand\zzeta[1]{\zeta^{{\scriptscriptstyle [}#1{\scriptscriptstyle{]}}}}

\newcommand\zz[2]{z_{{\scriptscriptstyle [}#1{\scriptscriptstyle{]}}}^{{\scriptscriptstyle [}#2{\scriptscriptstyle{]}}}}

\renewcommand\aa[2]{\omega_{{\scriptscriptstyle [}#1{\scriptscriptstyle ]}}^{{\scriptscriptstyle [}#2{\scriptscriptstyle ]}}}
\newcommand\bb[2]{\beta_{{\scriptscriptstyle [}#1{\scriptscriptstyle]}}^{{\scriptscriptstyle [}#2{\scriptscriptstyle]}}}

\newcommand\ppsi[1]{\psi^{{\scriptscriptstyle [}#1{\scriptscriptstyle ]}}}

\newcommand\ww[2]{w_{\scriptscriptstyle [#1]}^{\scriptscriptstyle [#2]}}
\newcommand\pzpz[3]{\frac{\partial\zz{#1}{#2}}{\partial\zzeta{#3}}}

\subsection{Limits of t-holomorphic functions and holomorphicity on~$\rS_\xi$}\label{sub:hol-on-S}

Recall that~$\zeta\in\mathbb{D}$ stands for a (orientation-preserving) conformal parametrization of the space-like \emph{Lorentz-minimal} surface~$\rS_\xi$, which means that this parametrization is both conformal and \emph{harmonic}, i.e., {that it satisfies the equations~\eqref{eq:conf-param}.} Though in the actual setup of Theorem~\ref{thm:main-GFF} the limit~$\olim$ of the origami maps~$\cO^\delta$ is necessarily a real-valued function, we do \emph{not} assume this in what follows, in particular, keeping in mind more general applications of our techniques. The condition that the surface~$\rS_\xi$ is space-like reads as
\[
|\,\overline{\alpha}\cdot\pazeta\olim +\alpha\cdot\opazeta\olim\,|\ <\ |\,\overline{\alpha}\cdot\pazeta\tlim+\alpha\cdot\opazeta\tlim\,|\ \ \text{for all~$\alpha\in\mathbb{T}$\,;}
\]
in particular, we have
\begin{equation}
\label{eq:z-zeta>theta-zeta}
|\pazeta\tlim|>|\pazeta\olim|\ge|\opazeta\tlim|\quad \text{and}\quad
|\pazeta\tlim|>|\opazeta\olim|\ge|\opazeta\tlim|\,.
\end{equation}
Moreover, the mapping~$\zeta\mapsto z(\zeta)\in\Omega_\xi$ is quasi-conformal on compact subsets of the unit disc and is proper, i.e., one has~$z(\zeta)\to\partial\Omega_\xi$ if~$\zeta\to\partial\mathbb{D}$.

In what follows we often use the following shorthand notation for the coordinates in the Minkowski space~$\R^{2+2}$:
\[
\z\ssp=\zz\ssp\ssp:=z,\ \ \z\ssm=\zz\ssp\ssm:=\overline{z},\ \ \oo\ssp=\zz\ssm\ssp:=\vartheta,\ \ \oo\ssm=\zz\ssm\ssm:=\overline{\vartheta}
\]
and a similar notation~$\zzeta\ssp:=\zeta$, $\zzeta\ssm:=\overline{\zeta}$ for the conformal coordinate on~$\rS_\xi$. Also, denote
\begin{equation}
\label{eq:bb-def}
\begin{array}{ll}
\bb\ssp\ssp(\zeta):=(\pazeta\tlim)^{1/2}, \quad  & \bb\ssm\ssp(\zeta):={\pazeta\olim}\cdot{(\pazeta\tlim)^{-1/2}},\\[4pt]
\bb\ssp\ssm(\zeta):={\opazeta\overline\olim}\cdot{(\opazeta\overline z)^{-1/2}},\quad & \bb\ssm\ssm(\zeta):=(\opazeta\overline{z})^{1/2} \end{array}
\end{equation}
and
\begin{equation}
\label{eq:aa-def}
\begin{array}{ll}
\aa\ssp\ssp(\zeta):=(\pazeta\tlim)^{1/2}, \quad & \aa\ssm\ssp(\zeta):={\pazeta\overline\olim}\cdot {(\pazeta\tlim)^{-1/2}},\\[4pt]
\aa\ssp\ssm(\zeta):={\opazeta\olim}\cdot{{(\opazeta\overline \tlim})^{-1/2}},\quad & \aa\ssm\ssm(\zeta):=(\opazeta\overline{\tlim})^{1/2};
\end{array}
\end{equation}
see the forthcoming Proposition~\ref{prop:hol-in-zeta}, Eq.~\eqref{eq:psipmpm-def} and Proposition~\ref{prop:psipmpm-def} for the motivation of this notation. Since the parametrization~$\zeta$ is conformal, for all~$p,q\in\{\pm\}$ one has
\begin{equation}
\label{eq:dzz=bbaadzzeta}
d\zz{pq}{q}\ =\ \bb{p}{\ssp}\aa{q}{\ssp}d\zeta\ +\ \bb{p}{\ssm}\aa{q}{\ssm}d\overline{\zeta}\ =\ \textstyle \sum_{r\in\{\pm\}}\bb{p}{r}\aa{q}{r}d\zzeta{r}\,.
\end{equation}
Note also that the functions~$\bb\sspm\ssp$ and~$\aa\sspm\ssp$ are holomorphic in~$\zeta$ while their conjugate~$\bb\ssmp\ssm$ and~$\aa\ssmp\ssm$ are anti-holomorphic since~$\opazeta\pazeta\tlim=\opazeta\pazeta\olim=0$.

The following proposition re-interprets the condition~\eqref{eq:closed-forms} as the usual holomorphicity property in the conformal parametrization of the surface~$\rS_\xi$.
\begin{proposition} \label{prop:hol-in-zeta}
In the setup of Theorem~\ref{thm:main-GFF}, let $\fb,\fw:U\to\C$ be continuous functions defined on an open set $U\subset\Omega_\xi$. Denote
\begin{align*}
\psi_\frb(\zeta)\ &:=\ \bb\ssp\ssp(\zeta)\cdot \fb(z(\zeta))+\bb\ssm\ssp(\zeta)\cdot \overline{\fb(z(\zeta))},\\
\psi_\frw(\zeta)\ &:=\ \aa\ssp\ssp(\zeta)\cdot \fw(z(\zeta))+\aa\ssm\ssp(\zeta)\cdot \overline{\fw(z(\zeta))}.
\end{align*}
Then, the differential form~$\fb dz+\ofb d\vartheta$ is closed if and only if the function~$\psi_\frb$ is holomorphic in~$\zeta$.
Similarly, the differential form~$\fw dz+\ofw d\overline\vartheta$ is closed if and only if the function~$\psi_\frw$ is holomorphic in~$\zeta$. Moreover,
\begin{equation}
\label{eq:fbfwdz=pbpwdzeta}
\Re[\,\fb\fw dz+\ofb\fw d\olim\,]\ =\ \Re[\,\psi_\frb\psi_\frw d\zeta\,]\,.
\end{equation}
In particular, the differential form~\eqref{eq:fbfwdz=pbpwdzeta} is closed and its primitive is harmonic in the conformal metric of the surface~$\rS_\xi$.
\end{proposition}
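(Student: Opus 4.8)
The plan is to transport the whole statement to the unit disc through the conformal parametrization $\zeta\mapsto z(\zeta)$, which is an orientation-preserving diffeomorphism of $\mathbb{D}$ onto $\Omega_\xi$ in the interior; since pullback by a diffeomorphism preserves (weak) closedness of $1$-forms, it suffices to work in the $\zeta$-variable. Writing $g_\frb:=\fb(z(\zeta))$ and using $dz=\pazeta z\,d\zeta+\opazeta z\,d\overline\zeta$, $d\vartheta=\pazeta\vartheta\,d\zeta+\opazeta\vartheta\,d\overline\zeta$, the form reads $\fb\,dz+\ofb\,d\vartheta=A_\frb\,d\zeta+B_\frb\,d\overline\zeta$ with $A_\frb=\pazeta z\cdot g_\frb+\pazeta\vartheta\cdot\overline{g_\frb}$ and $B_\frb=\opazeta z\cdot g_\frb+\opazeta\vartheta\cdot\overline{g_\frb}$. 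The first key observation, immediate from \eqref{eq:bb-def} (recall $\bb\ssp\ssp=(\pazeta z)^{1/2}$, $\bb\ssm\ssp=\pazeta\vartheta\,(\pazeta z)^{-1/2}$), is the factorization $A_\frb=\bb\ssp\ssp\,\psi_\frb$. For merely continuous $\fb$ I would read each differential identity weakly, justified by mollification and the quasiconformal change of variables; a posteriori $\psi_\frb$ is genuinely holomorphic and $\fb$ as smooth as $\rS_\xi$.

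Next I would use harmonicity $\opazeta\pazeta z=\opazeta\pazeta\vartheta=0$ from \eqref{eq:conf-param}, which makes $\pazeta z,\pazeta\vartheta$ (hence $\bb\ssp\ssp,\bb\ssm\ssp$) holomorphic and $\opazeta z,\opazeta\vartheta$ anti-holomorphic in $\zeta$. Thus $\opazeta A_\frb=\bb\ssp\ssp\,\opazeta\psi_\frb$, so with $X:=\opazeta A_\frb=\pazeta z\cdot\opazeta g_\frb+\pazeta\vartheta\cdot\overline{\pazeta g_\frb}$ (using $\opazeta\overline{g_\frb}=\overline{\pazeta g_\frb}$) and $Y:=\pazeta B_\frb=\opazeta z\cdot\pazeta g_\frb+\opazeta\vartheta\cdot\overline{\opazeta g_\frb}$ the closedness of the form is exactly $\opazeta A_\frb=\pazeta B_\frb$, i.e. $X=Y$. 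The heart of the matter is that conformality forces $Y=\mu\,\overline X$: solving the expression for $X$ for $\opazeta g_\frb$ and substituting into $Y$, the coefficient of $\pazeta g_\frb$ becomes proportional to $\opazeta z\cdot\overline{\pazeta z}-\opazeta\vartheta\cdot\overline{\pazeta\vartheta}$, the complex conjugate of \eqref{eq:conf-param}, hence $0$; this leaves $Y=\mu\,\overline X$ with $\mu=\opazeta\vartheta/\opazeta\overline z$. The space-like bound $|\pazeta z|>|\opazeta\vartheta|$ of \eqref{eq:z-zeta>theta-zeta} gives $|\mu|<1$, so the $\mathbb{R}$-linear map $X\mapsto X-\mu\overline X$ has determinant $1-|\mu|^2>0$ and is invertible; therefore $X=Y\Leftrightarrow X=0\Leftrightarrow\psi_\frb$ holomorphic (as $\bb\ssp\ssp\neq0$). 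The statement for $\fw$ is identical after replacing $d\vartheta$ by $d\overline\vartheta$: one gets $A_\frw=\aa\ssp\ssp\psi_\frw$, the multiplier is $\mu_\frw=\overline{\pazeta\vartheta}/\overline{\pazeta z}$, and one invokes the complementary bound $|\pazeta z|>|\pazeta\vartheta|$.

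For the last identity \eqref{eq:fbfwdz=pbpwdzeta} I again pass to the disc and compare real $1$-forms coefficientwise: using $\Re[P\,d\zeta+Q\,d\overline\zeta]=\tfrac12\big((P+\overline Q)\,d\zeta+\overline{(P+\overline Q)}\,d\overline\zeta\big)$, it suffices to verify the scalar identity $g_\frw A_\frb+\overline{g_\frw}\,\overline{B_\frb}=\psi_\frb\psi_\frw$ (with $g_\frw:=\fw(z(\zeta))$), the $d\overline\zeta$-coefficients then matching by conjugation. Expanding the left side and the product $\psi_\frb\psi_\frw$ (again via $A_\frb=\bb\ssp\ssp\psi_\frb$), the two sides differ only by a multiple of $\pazeta z\cdot\pazeta\overline z-\pazeta\vartheta\cdot\pazeta\overline\vartheta$, which vanishes by \eqref{eq:conf-param}; this proves \eqref{eq:fbfwdz=pbpwdzeta}. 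The closing ``in particular'' is then immediate: when both input forms are closed, $\psi_\frb,\psi_\frw$ are holomorphic by the first part, so $\psi_\frb\psi_\frw$ is holomorphic, $\Re[\psi_\frb\psi_\frw\,d\zeta]$ is closed, and its primitive is the real part of a holomorphic primitive of $\psi_\frb\psi_\frw$, i.e. a harmonic function of the conformal coordinate $\zeta$ — equivalently, a function harmonic in the intrinsic metric of $\rS_\xi$.

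I expect the single genuine difficulty to be the passage from $X=Y$ (closedness) to $X=0$ (holomorphicity): only the conjugate of the conformality relation \eqref{eq:conf-param} produces the factorization $Y=\mu\,\overline X$, and only the space-like inequality \eqref{eq:z-zeta>theta-zeta} guarantees $|\mu|<1$ and hence the invertibility of $X\mapsto X-\mu\overline X$. This is precisely the mechanism that upgrades the weaker-looking closedness identity to genuine holomorphicity; every other step is bookkeeping, the only additional care being the weak interpretation of the identities for non-smooth $\fb,\fw$.
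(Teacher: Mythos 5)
Your proof is correct, and it rests on exactly the same mechanism as the paper's: the conformality relation~\eqref{eq:conf-param} produces the Beltrami-type coefficient $\mu=\opazeta\vartheta/\opazeta\overline{z}$ (the paper's $\nu$), and the space-like bound~\eqref{eq:z-zeta>theta-zeta} gives $|\mu|<1$, which is what upgrades closedness to holomorphicity. The implementation differs: the paper works with the primitive $F=\int(\fb\,dz+\ofb\,d\vartheta)$, observes that it satisfies the conjugate Beltrami equation $\opazeta F=\nu\,\overline{\pazeta F}$ with $\pazeta\nu=0$, and proves harmonicity of $F$ by the iterated integration by parts $\iint F\,\opazeta\pazeta\phi=\iint F\,\opazeta\pazeta(|\nu|^2\phi)=\cdots$, after which $\pazeta F=\bb\ssp\ssp\psi_\frb$ is holomorphic; you instead differentiate the coefficients $A_\frb,B_\frb$ of the pulled-back form and invert the $\R$-linear map $X\mapsto X-\mu\overline{X}$. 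Your route is more elementary and makes the role of the two hypotheses very transparent; the paper's route is tailored to the low regularity of $\fb$ (only continuity is assumed), which is the one point where your argument is vaguer than it should be. It can be repaired without mollification: $X=\opazeta A_\frb$ and $Y=\pazeta B_\frb$ are distributions, the identity $Y=\mu\overline{X}$ is an algebraic consequence of multiplying distributional derivatives of $g_\frb$ by the smooth coefficients $\pazeta z,\pazeta\vartheta,\opazeta z,\opazeta\vartheta$, and $X=\mu\overline{X}$ forces $(1-|\mu|^2)X=0$, hence $X=0$, distributionally. Your verifications of~\eqref{eq:fbfwdz=pbpwdzeta} and of the final harmonicity claim coincide with what the paper leaves to the reader and are correct.
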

\begin{remark} Note that~$|\bb\ssp\ssp(\zeta)|>|\bb\ssm\ssp(\zeta)|$ for all~$\zeta\in \mathbb{D}$ due to~\eqref{eq:z-zeta>theta-zeta}. Therefore, one can uniquely reconstruct the function~$\fb$ from~$\psi_\frb$ (a similar statement holds for the functions $\fw$ and $\psi_\frw$).
\end{remark}
\begin{proof}[Proof of Proposition~\ref{prop:hol-in-zeta}]
Let~$V:=\zeta(U)\subset\mathbb{D}$ be the image of the open set~$U$ under the mapping~$z\mapsto \zeta(z)$. Consider the primitive
\[
F(\zeta):=\int^{z(\zeta)}\big(\fb(z)dz+\overline{\fb(z)}d\vartheta(z)\big),\quad \zeta\in V.
\]
It is easy to see that~$F$ satisfies the conjugate Beltrami equation
\[
\opazeta F(\zeta)\,=\,
\nu(\zeta)\cdot\overline{\pazeta F(\zeta)}\,,\quad \text{where}\quad \nu\,:=\,\opazeta\olim/ \opazeta\overline{\tlim}=\opazeta\tlim/\opazeta\overline{\olim}\,.
\]
Note that~$\pazeta\nu=0$ and let~$\phi\in C_0^\infty(V)$ be a test function. Since
\begin{align*}
\iint F\cdot \opazeta\pazeta\phi\ &=\ -\iint \partial_{\bar\zeta}F\cdot \partial_\zeta\phi\ =\ -\iint \partial_{\bar\zeta}\overline{F}\cdot \partial_\zeta(\nu\phi)\\
&=\ \iint \overline{F}\cdot \opazeta\pazeta(\nu\phi)\ =\ ...\ =\ \iint F\cdot\opazeta\pazeta(|\nu|^2\phi)
\end{align*}
we see that the function~$F$ is harmonic (note that~$|\nu(\zeta)|<1$ for all~$\zeta\in\mathbb{D}$ as the surface~$\rS_\xi$ is space-like). This implies that the derivative
\[
\pazeta F\ =\ \fb\cdot \pazeta z+\ofb\cdot \pazeta\vartheta\ =\ \psi_\frb\cdot \bb\ssp\ssp
\]
is a holomorphic function of~$\zeta$. Therefore, the function~$\psi_\frb$ itself is holomorphic since so is~$\bb\ssp\ssp$.

Vice versa, if the function~$\psi_\frb$ is holomorphic, then so are the products~$\psi_\frb\bb\sspm\ssp$ and hence the differential form
$\fb dz+\ofb d\vartheta=\psi_\frb\bb\ssp\ssp d\zeta+\overline{\psi}_\frb\bb\ssp\ssm d\overline{\zeta}$ is closed.

 The proof for~$\psi_\frw$ is similar. The identity~\eqref{eq:fbfwdz=pbpwdzeta} directly follows from the definitions~\eqref{eq:bb-def},~\eqref{eq:aa-def} of the coefficients~$\bb\sspm\sspm$, $\aa\sspm\sspm$.
\end{proof}

Proposition~\ref{prop:hol-in-zeta} applies to all (subsequential) limits of t-holomorphic functions on t-embeddings~$\cT^\delta$. We now prove a similar result for subsequential limits~$\ff{\sspm\sspm}$ of functions~$\FF{\pm\pm}_{\cT^\delta}$; see Corollary~\ref{cor:Fpmpm-hol-in-zeta}. For $r_1,r_2\in\{\pm\}$ denote
\begin{equation}
\label{eq:psipmpm-def}
\ppsi{r_1,r_2}(\zeta_1,\zeta_2)\ := \sum\nolimits_{s_1,s_2\in\{\pm\}}\bb{s_1}{r_1}(\zeta_1)\aa{s_2}{r_2}(\zeta_2)\ff{s_1,s_2}(z(\zeta_1),z(\zeta_2))\,,
\end{equation}
where $\bb\sspm\sspm(\zeta_1)$ and~$\aa\sspm\sspm(\zeta_2)$ are given by~\eqref{eq:bb-def} and~\eqref{eq:aa-def}; note that
\[
\ppsi{\ssm\ssm}(\zeta_1,\zeta_2)=\overline{\ppsi{\ssp\ssp}(\zeta_1,\zeta_2)}\quad \text{and}\quad \ppsi{\ssp\ssm}(\zeta_1,\zeta_2)=\overline{\ppsi{\ssm\ssp}(\zeta_1,\zeta_2)}
\]
due to the similar symmetries of the functions~$\ff{\sspm\sspm}(\zeta_1,\zeta_2)$, $\bb\sspm\sspm(\zeta_1)$, $\aa\sspm\sspm(\zeta_2)$.
\begin{proposition}
\label{prop:psipmpm-def}
In the setup of Theorem~\ref{thm:main-GFF}, let~$U_1,U_2$ be disjoint open subsets of~$\Omega_\xi$. Assume that the complexified dimer coupling functions $\FF{\pm\pm}_{\cT^\delta}$ converge, as $\delta\to 0$, to continuous functions~$\ff{\sspm\sspm}:U_1\times U_2\to\C$ uniformly on compact subsets of~$U_1\times U_2$. Let~$V_1:=\zeta(U_1)$ and~$V_2:=\zeta(U_2)$. Then, \\
(i)\phantom{i} for each~$\zeta_2\in V_2$ both functions $\ppsi{\ssp\sspm}(\,\cdot\,,\zeta_2)$ are holomorphic in $V_1$;\\
(ii) for each~$\zeta_1\in V_1$ both functions $\ppsi{\sspm\ssp}(\zeta_1,\,\cdot\,)$ are holomorphic in $V_2$.
\end{proposition}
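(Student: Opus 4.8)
The plan is to establish~(i); statement~(ii) is entirely symmetric. Fix~$\zeta_2\in V_2$, write~$z_2:=z(\zeta_2)$, and group the four terms of~\eqref{eq:psipmpm-def} according to the second summation index:
\[
\ppsi{\ssp,r_2}(\zeta_1,\zeta_2)\ =\ \aa\ssp{r_2}(\zeta_2)\,\phi_\ssp(\zeta_1)\ +\ \aa\ssm{r_2}(\zeta_2)\,\phi_\ssm(\zeta_1),\qquad r_2\in\{\ssp,\ssm\},
\]
where~$\phi_s(\zeta_1):=\bb\ssp\ssp(\zeta_1)\ff{\ssp,s}(z(\zeta_1),z_2)+\bb\ssm\ssp(\zeta_1)\ff{\ssm,s}(z(\zeta_1),z_2)$ for~$s\in\{\ssp,\ssm\}$. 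Since~$\zeta_2$ is frozen, the numbers~$\aa\sspm{r_2}(\zeta_2)$ are constants, so it suffices to prove that both~$\phi_\ssp$ and~$\phi_\ssm$ are holomorphic in~$\zeta_1$ on~$V_1$.

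The main obstacle is that neither~$\phi_\ssp$ nor~$\phi_\ssm$ is directly of the form handled by Proposition~\ref{prop:hol-in-zeta}: that proposition turns~$\bb\ssp\ssp\fb+\bb\ssm\ssp\ofb$ into a holomorphic function, i.e.\ it requires the coefficient of~$\bb\ssm\ssp$ to be the complex conjugate of the coefficient of~$\bb\ssp\ssp$. Here the coefficient of~$\bb\ssm\ssp$ is~$\ff{\ssm,s}$, which is \emph{not} the conjugate of~$\ff{\ssp,s}$; indeed, by Corollary~\ref{cor:Fpmpm-hol-in-zeta}(i) conjugation flips \emph{both} indices, so that~$\overline{\ff{\ssp,\ssp}}=\ff{\ssm,\ssm}$ and~$\overline{\ff{\ssp,\ssm}}=\ff{\ssm,\ssp}$. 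A single application of Proposition~\ref{prop:hol-in-zeta} is therefore unavailable, and resolving this mismatch is the crux of the argument.

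To get around it I would introduce, for every~$\eta\in\C$, the continuous function
\[
g_\eta(z_1)\ :=\ \overline{\eta}\,\ff{\ssp,\ssp}(z_1,z_2)+\eta\,\ff{\ssp,\ssm}(z_1,z_2),\qquad z_1\in U_1.
\]
By Proposition~\ref{prop:Fpmpm-def}(iii) the functions~$\overline{\eta}\,\FF{++}_{\cT^\delta}(\,\cdot\,,\wc)+\eta\,\FF{+-}_{\cT^\delta}(\,\cdot\,,\wc)$ are t-black-holomorphic away from~$\wc$ (whose limit~$z_2\in U_2$ is harmless, as~$U_1\cap U_2=\varnothing$); since the~$\FF{\pm\pm}_{\cT^\delta}$ converge to~$\ff{\sspm\sspm}$ uniformly on compact subsets, the function~$g_\eta$ is a limit of t-black-holomorphic functions, whence the form~$g_\eta\,dz+\overline{g_\eta}\,d\vartheta$ is closed by Theorem~\ref{thm:F-precomp} (closedness of~\eqref{eq:closed-forms}). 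Proposition~\ref{prop:hol-in-zeta} applied to~$g_\eta$ then shows that~$\bb\ssp\ssp(\zeta_1)\,g_\eta(z(\zeta_1))+\bb\ssm\ssp(\zeta_1)\,\overline{g_\eta(z(\zeta_1))}$ is holomorphic in~$\zeta_1$. Expanding~$\overline{g_\eta}=\eta\,\ff{\ssm,\ssm}+\overline{\eta}\,\ff{\ssm,\ssp}$ and collecting the coefficients of~$\overline{\eta}$ and~$\eta$ identifies this expression with~$\overline{\eta}\,\phi_\ssp(\zeta_1)+\eta\,\phi_\ssm(\zeta_1)$. Thus~$\overline{\eta}\,\phi_\ssp+\eta\,\phi_\ssm$ is holomorphic in~$\zeta_1$ for \emph{every}~$\eta\in\C$; choosing~$\eta=1$ and~$\eta=i$ and taking linear combinations shows that~$\phi_\ssp$ and~$\phi_\ssm$ are each holomorphic, which proves~(i).

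For~(ii) I would run the same scheme with the roles of the variables exchanged: freeze~$\zeta_1$, group~\eqref{eq:psipmpm-def} by the first index into functions~$\chi_{s_1}(\zeta_2):=\aa\ssp\ssp(\zeta_2)\ff{s_1,\ssp}+\aa\ssm\ssp(\zeta_2)\ff{s_1,\ssm}$, replace~$g_\eta$ by the t-white-holomorphic combinations~$\overline{\eta}\,\FF{++}_{\cT^\delta}(\bc,\,\cdot\,)+\eta\,\FF{-+}_{\cT^\delta}(\bc,\,\cdot\,)$ of Proposition~\ref{prop:Fpmpm-def}(iii), and invoke the t-white version of Proposition~\ref{prop:hol-in-zeta} (with~$d\overline{\vartheta}$ and the coefficients~$\aa\sspm\ssp$ in place of~$\bb\sspm\ssp$). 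The same two-value interpolation in~$\eta$ yields that~$\chi_\ssp,\chi_\ssm$ are holomorphic in~$\zeta_2$, and hence so are the constant-in-$\zeta_2$ combinations~$\ppsi{r_1,\ssp}(\zeta_1,\,\cdot\,)=\bb\ssp{r_1}(\zeta_1)\chi_\ssp+\bb\ssm{r_1}(\zeta_1)\chi_\ssm$. Apart from the conjugation mismatch addressed by the auxiliary family~$\{g_\eta\}$, everything reduces to bookkeeping with the definitions~\eqref{eq:bb-def}--\eqref{eq:aa-def}.
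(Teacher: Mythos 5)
Your proposal is correct and follows essentially the same route as the paper: both arguments apply Proposition~\ref{prop:Fpmpm-def}(iii) to the family $\overline{\eta}\,\FF{++}(\,\cdot\,,\wc)+\eta\,\FF{+-}(\,\cdot\,,\wc)$, pass to the limit, use the conjugation symmetry $\overline{\ff{\ssp\sspm}}=\ff{\ssm\ssmp}$ together with Proposition~\ref{prop:hol-in-zeta} to see that $\overline{\eta}\,\phi_\ssp+\eta\,\phi_\ssm$ is holomorphic for every $\eta$, vary $\eta$ to isolate $\phi_\ssp$ and $\phi_\ssm$, and finish by taking the linear combination with the constants $\aa{s}{r}(\zeta_2)$. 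The only difference is expository — you spell out the closedness step via Theorem~\ref{thm:F-precomp} and the choice $\eta\in\{1,i\}$, which the paper leaves implicit.
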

\begin{proof} 
It follows from Proposition~\ref{prop:Fpmpm-def}(iii) and Proposition~\ref{prop:hol-in-zeta} that for each $\eta\in\C$ the function
\begin{align*}
\bb\ssp\ssp(\cdot)\big(&\overline{\eta}\ff{\ssp\ssp}(z(\cdot),z(\zeta_2))+\eta\ff{\ssp\ssm}(z(\cdot),z(\zeta_2))\big)\\
&+\bb\ssm\ssp(\cdot)\big(\eta\ff{\ssm\ssm}(z(\cdot),z(\zeta_2))+\overline{\eta}\ff{\ssm\ssp}(z(\cdot),z(\zeta_2))\big).
\end{align*}
is holomorphic in~$V_1$. Varying~$\eta$ one sees that this is only possible if so are both functions
\[
\bb\ssp\ssp(\cdot)\ff{\ssp,s}(z(\cdot),z(\zeta_2))+\bb\ssm\ssp(\cdot)\ff{\ssm,s}(z(\cdot),z(\zeta_2)),\quad s\in\{\pm\}.
\]
Taking their linear combination with the coefficients~$\aa{s}{r}(\zeta_2)$ we obtain the function~$\ppsi{\ssp,r}(\cdot,\zeta_2)$, which also has to be holomorphic in~$V_1$ for both~$r\in\{\pm\}$. The proof of (ii) is similar.
\end{proof}

\section{Proof of Theorem~\ref{thm:main-GFF}}\label{sec:main-proof}
\subsection{Uniform boundedness of the inverse Kasteleyn matrix}\label{sub:boundedness}
We now prove the key estimate required to apply the general scheme developed in~\cite[Theorem~1.4]{CLR1} in the setup of this paper.
\begin{theorem}\label{thm:K-1=O(1)}
In the setup of Theorem~\ref{thm:main-GFF}, the functions~$K^{-1}_{\cT^\delta}(w,b)$ are uniformly bounded as~$\delta\to 0$ provided that $w$ and~$b$ stay at a definite distance from each other and at least one of them stays at a definite distance from~$\partial\Omega_\xi$.

Moreover, the functions~$\FF{\pm\pm}_{\cT^\delta}(\bc,\wc)$ defined in Proposition~\ref{prop:Fpmpm-def} are also uniformly bounded as~$\delta\to 0$ provided that~$\bc,\wc$ stay at a definite distance from each other and at least one of them stays at a definite distance from~$\partial\Omega_\xi$.
\end{theorem}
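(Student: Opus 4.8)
The plan is to bound the t-holomorphic functions $F_b^\tw(w)=\overline{\eta}_bK^{-1}(w,b)$ (and, symmetrically, $F_w^\tb$), and then to read off the bounds on $K^{-1}$ and on $\FF{\pm\pm}$. By the symmetry between the two arguments of $K^{-1}$, it suffices to treat the case when $b$ stays at a definite distance from $\partial\Omega_\xi$ while $w$ is only assumed to stay at a definite distance $c_1>0$ from $b$ (and may approach the boundary); the complementary case follows by applying the same argument to $F_w$. First I would invoke the a priori regularity theory: by Theorem~\ref{thm:F-via-H} and Corollary~\ref{cor:F-via-H}, in order to show that $F_b^{\tw,\delta}$ is uniformly bounded on $\{\,|z-b|\ge c_1\,\}$ it is enough to produce a uniform bound on the oscillation of a harmonic primitive $\rI_{\alpha\R}[F_b]$ there. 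The natural choice is $\alpha=i\overline{\eta}_b$: since the additive monodromy of $\rI_\C[F_b]$ around the puncture $b$ equals $-2\overline{\eta}_b$, which lies on the line $\overline{\eta}_b\R$ orthogonal to $i\overline{\eta}_b\R$, the projected primitive $H:=\rI_{i\overline{\eta}_b\R}[F_b]$ is single-valued and harmonic on the T-graph $\cT-\overline{\eta}_b^2\overline{\cO}$ away from $b$.

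Next I would control $H$ near the source. By \eqref{eq:P-k-dimers} with $n=1$, for each $w\sim b$ the quantity $K(b,w)K^{-1}(w,b)=\P[(bw)\in\cD]\in[0,1]$, and since exactly one dimer covers $b$ one has $\sum_{w\sim b}\P[(bw)\in\cD]=1$. As the increment of $\rI_\C[F_b]$ across the edge dual to $(bw)$ equals $2\overline{\eta}_bK^{-1}(w,b)\,d\cT(bw^*)$, of modulus $2\,\P[(bw)\in\cD]$, the total variation of $\rI_\C[F_b]$ around $b$ is exactly $2$. Together with the discrete Harnack principle (Proposition~\ref{prop:Harnack}) and the H\"older estimate (Proposition~\ref{prop:H-Holder}), which are available down to scale $\delta$ because the embeddings satisfy \LipKd, this identifies the logarithmic nature of the singularity and yields $\osc H=O(1)$, with a constant depending only on $c_1$, on the circle $\{\,|z-b|=c_1\,\}$.

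The main obstacle is to transfer this local bound to a global one up to the outer boundary, and it is here that the \emph{perfect} structure of $\cT^\delta$ is indispensable. I would apply the maximum principle for $H$ (equivalently, the maximum principle for $|F_b^\tb|$ from Lemma~\ref{lem:max-t-hol}, with the boundary values at $\partial_{\mathrm{out}}\cT$ defined by \eqref{eq:F-pa-out-def}) on the part of $\cT^\delta$ at distance $\ge c_1$ from $b$, whose boundary consists of $\{\,|z-b|=c_1\,\}$ and $\partial_{\mathrm{out}}\cT$. The key point is that the standard boundary condition $F_b^\tw\equiv 0$ on $W_{\mathrm{out}}$ forces $H$ to be locally constant along half of the boundary edges, while the tangential-polygon and bisector conditions, together with the fact that the origami map sends $\partial\Omega_\xi$ into $\R$, keep the T-graph $\cT-\overline{\eta}_b^2\overline{\cO}$ non-degenerate up to $\partial_{\mathrm{out}}\cT$; this makes the outer boundary behave as an absorbing boundary for $H$, so that $H$ is dominated by the discrete Green's function of the associated random walk with pole at $b$. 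Since $b$ lies in the bulk, this Green's function is $O(1)$ at distance $c_1$, which closes the argument. I expect this boundary analysis to be the genuinely delicate step, and to be precisely the reason why the method requires one of the two points to remain in the bulk: with both $w$ and $b$ near $\partial\Omega_\xi$ there is no bulk source to anchor the maximum principle, and one would instead have to control the coupling function purely from boundary data.

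Finally, I would collect the consequences. The uniform bound $F_b^{\tw,\delta}=O(1)$ gives $K^{-1}_{\cT^\delta}(w,b)=O(1)$ directly, and, since $\cT^\delta$ satisfies both \LipKd and \ExpFat, Remark~\ref{rem:F=O(1)-equiv} upgrades it to $F_b^{\tb,\delta}=O(1)$ (and likewise $F_w^{\tw,\delta}=O(1)$). The bound on $\FF{\pm\pm}_{\cT^\delta}(\bc,\wc)$ then follows from Proposition~\ref{prop:Fpmpm-def}: applying the single-variable estimate in the black variable with $\wc$ fixed and in the white variable with $\bc$ fixed, and using the linear relations of Proposition~\ref{prop:Fpmpm-def}(ii) together with the symmetries in (i), expresses the $\FF{\pm\pm}$ through the uniformly bounded functions $F_b^\tb$ and $F_w^\tw$, giving their uniform boundedness on the prescribed region.
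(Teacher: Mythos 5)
There is a genuine gap, and it sits exactly at the two steps you flag as delicate. Your plan reduces everything to an $O(1)$ bound on the oscillation of the single-valued primitive $H=\rI_{i\overline{\eta}_b\R}[F_b]$, but neither of your two sources of control actually delivers it. First, near the puncture: the fact that the total variation of $\rI_\C[F_b]$ around the single face $b$ equals $2$ only fixes the ``charge'' of the singularity; it says nothing about the harmonic background. In the decomposition $H=G+H_{\mathrm{harm}}$ of the paper (cf.~\eqref{eq:H=G+H0}), the monodromy controls $G$ (this is Lemma~\ref{lem:conv-to-Green}, which already requires a non-trivial renormalization argument to rule out $G^\delta\to\infty$), while $\osc H_{\mathrm{harm}}$ is controlled only through the values of $H$ on a macroscopic contour --- so Harnack and H\"older cannot produce ``$\osc H=O(1)$ on $\{|z-b|=c_1\}$'' without an external input. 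Second, at the outer boundary: the standard boundary conditions make $H$ constant across the \emph{black} boundary edges only; across the white boundary edges it jumps by $2\Re[\,i\overline{\eta}_b K^{-1}(w_k,b)\,d\cT(\cdot)\,]$, i.e.\ by precisely the quantities you are trying to bound, with no a priori sign or size control. The assertion that $\partial_{\mathrm{out}}\cT$ is ``absorbing'' and that $H$ is dominated by a Green's function is therefore circular: it presupposes $O(1)$ boundary data for $H$.

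The paper breaks this circularity with an ingredient your proposal does not contain: Lemma~\ref{lem:same-sign}. For a \emph{boundary} face $b$ one can compute the complex arguments of all $K^{-1}(w_k,b)$, $w_k\in\partial W$, by adding an auxiliary edge in the outer face and using that $K^{-1}_{\mathrm{ext}}(w_k,b)K_{\mathrm{ext}}(b,w_k)$ is a probability; combined with the tangential-polygon/bisector identities~\eqref{eq:phi-phi=xi-xi} and $\sum_k(\phi_{2k}-\phi_{2k-1})=\pi$, this shows that for a suitable $\lambda_b$ \emph{all} boundary increments of $\rI_{\lambda_b\R}[F_b]$ have the same sign and sum to the monodromy, whence $|\rI_{\lambda_b\R}[F_b]|\le 1$ globally and (via Theorem~\ref{thm:F-via-H}) $F_b=O(1)$ in the bulk. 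By the symmetry $F_w^\tb(b)=\overline{\eta}_w\eta_bF_b^\tw(w)$ this bounds $F_w$ \emph{on the outer boundary} for bulk $w$, and only then does the interior argument (your local step, made rigorous through Lemma~\ref{lem:conv-to-Green} and a second renormalization/contradiction argument that explicitly uses the boundary bound to exclude $\osc_{\Gamma^\delta}H^\delta\to\infty$) go through. Note that the sign computation is unavailable for a bulk face $b$, since the auxiliary edge must be drawn in the outer face; this is why the order of the two steps matters and why your direct attack on $F_b$ for bulk $b$ cannot be closed as written. Your concluding reductions ($K^{-1}=O(1)$ from $F=O(1)$, and the passage to $\FF{\pm\pm}$ via \LipKd, \ExpFat\ and Proposition~\ref{prop:Fpmpm-def}) do match the paper.
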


\begin{proof}
Let $d_0>0$ be fixed and assume, e.g., that~$\mathrm{dist}(w,\partial\Omega_\xi)\ge 4d_0$; the other case~$\mathrm{dist}(b,\partial\Omega_\xi)\ge 4d_0$ is fully symmetric. Below we prove an estimate $F_w^\tw(\wc)=O(1)$ for the `true' complex values~$F_w^\tw$ of t-holomorphic functions~$F_w$; recall that~$F_w^\tb(\,\cdot\,)=\overline{\eta}_w K^{-1}(w,\,\cdot\,)$. Let us emphasize that, here and below, the implicit constants in such estimates are allowed to depend on~$d_0$. The proof goes in two steps.
\begin{itemize}
\item \textbf{Step~1.} We prove that~$F_w^\tw(\wc)=O(1)$ for the points~${\wc=w^\pm_{\mathrm{out},k}}$ lying at the \emph{outer boundary} of~$\cT^\delta$; see Fig.~\ref{fig:p-emb}.

\smallskip

\item\textbf{Step~2.} We prove that the functions~$F_w^\tw$ remain uniformly bounded as~$\delta\to 0$ at points lying at a fixed (small) distance~$d_0$ from~$w$.
\end{itemize}
The proofs of these two steps are given below. Then, the uniform estimate
\begin{equation}
\label{eq:Fw=O(1)}
F_w^\circ(\wc)=O(1)\ \ \text{for all~$\wc$ such that $|\wc-w|\ge d_0$}
\end{equation}
follows from the maximum principle applied to the \mbox{t-holomorphic} function~$F_w^\circ$; see Lemma~\ref{lem:max-t-hol}. Since~$K^{-1}(w,b)=\eta_w F^\tb_w(b)=\Pr(F^\tw_w(\wc),\eta_b\R)$ for~$\wc\sim b$, this immediately gives the desired~$O(1)$ bound for the entries of the inverse Kasteleyn matrices~$K^{-1}_{\cT^\delta}$. Recall that
\[
F_w^\circ(\wc)\ =\ \tfrac12(\overline{\eta}_w \FF{++}(\bc,\wc)+\eta_w \FF{-+}(\bc,\wc))\ \ \text{if}\ \ \bc\sim w.
\]
As t-embeddings~$\cT^\delta$ satisfies assumptions~\LipKd\ and~\ExpFat\ \emph{in the bulk} of~$\Omega_\xi$, the uniform estimate~\eqref{eq:Fw=O(1)} also implies a (formally, stronger) estimate
\[
\FF{\pm\pm}_{\cT^\delta}(\bc,\wc)=O(1)\ \ \text{if}\ \ \mathrm{dist}(\bc,\partial\Omega_\xi)\ge 4d_0\ \ \text{and}\ \ |\wc-\bc|\ge d_0;
\]
see the proof of~\cite[Proposition~6.21]{CLR1}.
\end{proof}

We now pass to the proofs of Step~1 and Step~2. The following lemma is the key ingredient of Step~1.

\begin{lemma}\label{lem:same-sign}
Let~$b\in\partial^\tb\cT^\delta$ be a boundary face of a perfect t-embedding~$\cT$. Then, one can find a direction~$\lambda=\lambda_b\in\mathbb{T}$ such that all increments of the real-valued function~$\overline{\lambda}\rI_{\lambda\R}[F_b]=2\Re(\overline{\lambda}\rI_\C[F_b])$ along the outer boundary of~$\cT$ are of the same sign and sum up to~$\Re[\lambda\eta_b]$.

This implies that~$|\rI_{\lambda\R}[F_b]|\le 1$ everywhere on~$\cT$ provided that the additive constant in the definition of the primitive $\rI_{\lambda\R}[F_b]$ is chosen appropriately.
\end{lemma}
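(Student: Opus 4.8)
The plan is to read the boundary increments of $\rI_\C[F_b]$ directly off the definition of the primitive, and then to pick $\lambda$ so that their projections onto $\lambda\R$ all point the same way.

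First I would identify which increments are nonzero. On an edge $(bw)^*$ of $\cT$ the differential form underlying $\rI_\C[F_b]$ equals $2F_b^\tw(w)\,d\cT$, so the increment vanishes as soon as the incident white face has $F_b^\tw=0$. By the standard boundary conditions this happens on every outer edge $(v_{2k}v_{2k+1})$, which borders $w_{\mathrm{out},k}\in W_\mathrm{out}$; hence only the $n$ edges $(v_{2k-1}v_{2k})$, bordering the genuine boundary faces $w_k\in\partial W$, contribute. On such an edge I would combine $F_b^\tw(w_k)\in\eta_{w_k}\R$ with the perfect-embedding geometry: by \eqref{eq:eta=perfect}--\eqref{eq:phi-phi=xi-xi} the boundary segment is tangent to the unit circle at direction $\phi_{2k}-\xi_{2k}$ and is therefore parallel to $\overline\eta_{w_k}^2$. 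Thus the (well-defined) increment $\Delta_k:=2F_b^\tw(w_k)\,(\cT(v_{2k})-\cT(v_{2k-1}))$ is a real multiple of $\overline\eta_{w_k}$, say $\Delta_k=2r_kc_k\,\overline\eta_{w_k}$ with $r_k=\tan\xi_{2k}-\tan\xi_{2k-1}$ and $c_k:=\overline\eta_{w_k}F_b^\tw(w_k)\in\R$. Summing over $k$ and invoking the monodromy computation made right before the lemma, the total increment of $\overline\lambda\rI_{\lambda\R}[F_b]$ around the outer boundary equals $\Re[\lambda\eta_b]$, of modulus at most $1$; this already yields the asserted value of the sum.

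The crux is to choose $\lambda=\lambda_b\in\mathbb{T}$ for which all $\Re(\overline\lambda\Delta_k)$, $k=1,\dots,n$, share a sign, equivalently for which the harmonic primitive $H:=\overline\lambda\rI_{\lambda\R}[F_b]$ is monotone along the outer boundary. Since $\overline\eta_{w_k}^2=ie^{i(\phi_{2k}-\xi_{2k})}$ and the tangent angles $\phi_{2k}-\xi_{2k}$ increase around the full circle, the unit vectors $\overline\eta_{w_k}$ sweep an arc of length approaching $\pi$; consequently the directions alone do not fit into a half-plane and one must control the real signs of the products $r_kc_k$. The geometric factor $r_k$ is explicit, but the analytic sign of $c_k=\overline\eta_{w_k}\overline\eta_b\,K^{-1}(w_k,b)$ is not, and this is the step I expect to be the main obstacle. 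I would control it through the maximum principle of Lemma~\ref{lem:max-t-hol}: realizing the $\Delta_k$ as the boundary increments of $H$ on the T-graph $\cT+\lambda^2\overline\cO$, and using the boundary values $F_b^\tb(b^\pm_{\mathrm{out},k})$ from \eqref{eq:F-pa-out-def} together with the fact that $\Re(\overline\alpha F_b^\tb)$ obeys the discrete maximum principle up to $\partial_\mathrm{out}\cT$, I would argue that a sign reversal among the $\Re(\overline\lambda\Delta_k)$ is incompatible with the extremal behaviour forced by that principle. Making this monotonicity precise is the delicate part of the proof.

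Granting monotonicity, the conclusion is soft. By Proposition~\ref{prop:t-hol=grad} the function $\rI_{\lambda\R}[F_b]$ is harmonic on the T-graph $\cT+\lambda^2\overline\cO$, which is an honest (overlap-free) planar subdivision. Its boundary trace is constant across the zero-increment edges $(v_{2k}v_{2k+1})$ and jumps monotonically across the edges $(v_{2k-1}v_{2k})$, so around the loop it sweeps an interval of length $|\Re[\lambda\eta_b]|\le1$ (here one works on the domain slit from $b$ to the boundary, across which $H$ has the corresponding jump). The maximum principle then confines the bulk values to the same interval, and choosing the free additive constant in the primitive to centre it yields $|\rI_{\lambda\R}[F_b]|\le1$ everywhere on $\cT$, as claimed.
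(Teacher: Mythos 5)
Your setup is correct and you have correctly isolated the crux: the increments of $\rI_\C[F_b]$ vanish across the edges $(v_{2k}v_{2k+1})$, the only contributing edges are $(v_{2k-1}v_{2k})$ where the increment is a real multiple of $\overline{\eta}_{w_k}$, the sum telescopes to the monodromy $2\overline{\eta}_b$, and everything hinges on determining the \emph{signs} of $c_k=\overline{\eta}_{w_k}\overline{\eta}_b K^{-1}(w_k,b)$. But at exactly that point the proposal stops: you state that you ``would control it through the maximum principle of Lemma~\ref{lem:max-t-hol}'' and that ``making this monotonicity precise is the delicate part of the proof.'' That is the entire content of the lemma, and the sketched route is unlikely to close the gap. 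The maximum principle for $\Re[\overline{\alpha}F_b^\tb]$ constrains where extrema of projections of the t-holomorphic function sit; it does not by itself produce the global, cyclically ordered sign pattern of the boundary values $K^{-1}(w_k,b)$, $k=1,\dots,n$, which is a combinatorial fact about the Kasteleyn orientation rather than an analytic one. (Also, your remark that the directions $\overline{\eta}_{w_k}$ ``do not fit into a half-plane'' is off: they sweep an arc of length exactly $\pi$, i.e.\ a closed half-plane, and the whole point is that with the \emph{correct} signs the actual increments stay in that closed half-plane after rotation by $\overline{\lambda}$.)

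The paper's resolution of this step is a short combinatorial argument that your proposal does not contain. For each $k$ one adjoins to $\G$ an auxiliary edge $e_\mathrm{ext}$ of small weight $\varepsilon$ from $b=b_n$ to $w_k$, drawn counterclockwise inside the outer face; the Kasteleyn condition forces this edge to carry the complex sign $\varepsilon\prod_{j=1}^{k}e^{i\phi_{2j-1}}\prod_{j=1}^{k-1}e^{-i\phi_{2j}}$, and the Kasteleyn theorem gives $K_\mathrm{ext}^{-1}(w_k,b_n)K_\mathrm{ext}(b_n,w_k)\in\R_+$ because it is the probability that $e_\mathrm{ext}$ lies in a random dimer cover. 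Letting $\varepsilon\to 0$ pins down the phase of $K^{-1}(w_k,b_n)$ exactly, whence $\rI_\C[F_b](v_{2k})-\rI_\C[F_b](v_{2k-1})\in i\overline{\eta}_b e^{-i\xi_1}e^{-i\sigma_k}\R_+$ with $\sigma_k=\sum_{j=1}^{k-1}(\phi_{2j+1}-\phi_{2j})\in[0,\pi]$, using the tangency identities \eqref{eq:phi-phi=xi-xi} and $\sum_{k=1}^n(\phi_{2k}-\phi_{2k-1})=\pi$. Taking $\lambda=\overline{\eta}_b e^{-i\xi_1}$ then makes all increments of $\Re(\overline{\lambda}\rI_\C[F_b])$ nonnegative, and the final bound $|\rI_{\lambda\R}[F_b]|\le 1$ follows as you describe, from the telescoping sum and the maximum principle on the associated T-graph. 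Without some substitute for this sign computation the proof is incomplete.
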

\begin{proof} For concreteness, assume that~$b=b_n$. Since~$F_b^\tw(w)=\overline{\eta}_b K^{-1}(w,b)$, the primitive~$\rI_\C[F_b]$ has no jumps along black boundary edges $(v_{2k}v_{2k+1})$ of~$\cT$ for all~$k\le n-1$ while $\rI_\C[F_b](v_1)-\rI_\C[F_b](v_{2n})=2\overline{\eta}_b$. (This quantity is nothing but the additive monodromy of the primitive~$\rI_\C[F_b]$ around~$b_n$.)

Let us now find the signs of~$F_b^\circ(w_k)=\overline{\eta}_b K^{-1}(w_k,b)$ for~$k=1,\ldots,n$. To this end, let us add to the graph~$\mathcal{G}$ an additional edge~$e_\mathrm{ext}$ of a (small) weight~$\varepsilon>0$ going from~$b=b_n$ to~$w_k$ counterclockwise inside the outer face of~$\cT$. It is easy to see that the Kasteleyn condition on the extended graph~$G_\mathrm{ext}:=G\cup e_\mathrm{ext}$ holds if (and only if) we give a complex sign
\[
K_\mathrm{ext}(e_\mathrm{ext})\ :=\ \varepsilon\cdot \textstyle \prod_{j=1}^{k}e^{i\phi_{2j-1}}\prod_{j=1}^{k-1}e^{-i\phi_{2j}}
\]
to this newly drawn edge; we set~$K_\mathrm{ext}(e):=K(e)$ for all other edges of~$\mathcal{G}$. The Kasteleyn theorem implies that
\[
K_\mathrm{ext}^{-1}(w_k,b_n)K_\mathrm{ext}(b_n,w_k)\in \R_+
\]
(this is the probability that~$e_\mathrm{ext}$ belongs to a random dimer covering of~$\mathcal{G}_\mathrm{ext}$). Since~$K_\mathrm{ext}^{-1}(w_k,b_n)\to K^{-1}(w_k,b_n)$ as~$\varepsilon\to 0$ we see that
\[
F_b^\tw(w_k)=\overline{\eta}_bK^{-1}(w_k,b_n)\ \in\ \textstyle \overline{\eta}_b\prod_{j=1}^{k}e^{-i\phi_{2j-1}}\prod_{j=1}^{k-1}e^{i\phi_{2j}}\cdot \R_+\,.
\]
Recall that~$\cT(v_{2k})-\cT(v_{2k-1})\in \overline{\eta}{}_{w_k}^2\R_+ =ie^{i(\phi_{2k-1}-\xi_{2k-1})}\R_+$. Therefore,
\begin{align*}
\rI_\C[F_b](v_{2k})-\rI_\C[F_b](v_{2k-1})\ &\in\ i\overline{\eta}_b e^{-i\xi_{2k-1}+\sum_{j=1}^{k-1}(\phi_{2j}-\phi_{2j-1})}\cdot \R_+\\
&=\ \textstyle i\overline{\eta}_b e^{-i\xi_1-\sum_{j=1}^{k-1}(\phi_{2j}-\phi_{2j+1})}\cdot \R_+
\end{align*}
since~$\phi_{2j}-\phi_{2j-1}=\xi_{2j}-\xi_{2j-1}$ and $\phi_{2j}-\phi_{2j+1}=\xi_{2j+1}-\xi_{2j}$. It remains to choose, e.g.,~$\lambda:=\overline{\eta}_b e^{-i\xi_1}$ and note that $\sum_{j=1}^{k-1}(\phi_{2j}-\phi_{2j-1})\in [0,\pi]$ since all the terms in this sum are positive and~$\sum_{j=1}^n(\phi_{2j}-\phi_{2j-1})=\pi$.

To conclude, recall that $\rI_{\lambda\R}[F_b](v_{2k})=\rI_{\lambda\R}[F_b](v_{2k+1})$ for all~$k\le n-1$. Since all the terms in the sum
\begin{align*}
\textstyle \sum_{k=1}^{2n}(\rI_{\lambda\R}[F_b](v_{2k})-\rI_{\lambda\R}[F_b](v_{2k-1}))\ &=\ \rI_{\lambda\R}[F_b](v_{2n})-\rI_{\lambda\R}[F_b](v_1)\\
&=\ 2\lambda\Re(\lambda\eta_b).
\end{align*}
have the same sign, one can choose an additive constant in the definition of the primitive~$\rI_{\lambda\R}[F_b]$ so that~$|\rI_{\lambda\R}[F_b]|\le 1$ for all outer vertices. This estimate trivially extends to the bulk of~$\cT$ due to the maximum principle for harmonic functions on T-graphs.
\end{proof}

\begin{proof}[Proof of Step~1] Let a white face~$w$ of~$\cT^\delta$ be such that~$\mathrm{dist}(w,\partial\Omega_\xi)\ge 4d_0$. It follows from Lemma~\ref{lem:same-sign} and the a priori regularity theory for t-holomorphic functions (see Theorem~\ref{thm:F-via-H}) that
\[
F_w^\tb(b)=\overline{\eta}_w K^{-1}(w,b)=\overline{\eta}_w\eta_b F_b^\circ(w)\ =\ O(1)\ \ \text{as}\ \ \delta\to 0,
\]
uniformly over \emph{boundary} faces~$b\in\partial^\bullet\cT^\delta$ (and over \emph{inner} faces~$w$ lying at a definite distance from~$\partial\Omega_\xi$). This implies the required uniform estimate
\begin{equation}
\label{eq:step-1}
F_w^\tw(\wc)\ =\ O(1)\ \ \text{as}\ \ \delta\to 0,\ \ \text{uniformly over}\ \wc\in\partial^\tw_\mathrm{out}\cT^\delta,
\end{equation}
since {we have, similarly to~\eqref{eq:F-pa-out-def},}
\[
{\Pr(F^\tw_w(w_{\mathrm{out},k}^\pm),\eta_{b_k}\R)=F^\tb_w(b_k)\,,\quad
\begin{array}{l}
\Pr(F^\tw_w(w_{\mathrm{out},k}^+),\eta_{b_{\mathrm{out},k+1}}\R) =0\,,\\[4pt]
\Pr(F^\tw_w(w_{\mathrm{out},k}^-),\eta_{b_{\mathrm{out},k}}\R) =0\,,
\end{array}}
\]
and the angles between the directions~$\eta_{b_k}\R$ and~$\eta_{b_{\mathrm{out},k}}\R$ or~$\eta_{b_{\mathrm{out},{k+1}}}\R$ remain uniformly isolated from~$0$ and~$\pi$; see Remark~\ref{rem:xi-k}.
\end{proof}
\begin{remark} Let us emphasize that the estimate~\eqref{eq:step-1} is \emph{not} uniform in~$d_0$. Indeed, estimating t-holomorphic function~$F_w$ via Theorem~\ref{thm:F-via-H} already costs a $O(d_0^{-1})$ factor. This dependence on~$d_0$ can be even worse since {these} a priori regularity estimates also implicitly depend on the constant~$\kappa$ in the assumption~{\LipKd} and it can happen that~$\kappa\to 1$ as~$d_0\to 0$.
\end{remark}

We now move to Step~2. Let~$w=w^\delta$ be an inner white face of~$\cT^\delta$ and denote~$\eta^\delta:=\eta_{w^\delta}$ for shortness.
Consider the (complex-valued) primitive $\rI_\C[F^\delta_{w^\delta}]$ of the t-holomorphic function~$F_{w^\delta}$; recall that~$F_w^\bullet(b)=\overline{\eta}_w K^{-1}(w,b)$. Strictly speaking, $\rI_\C[F_{w^\delta}]$ is not a well-defined function on~$\cT^\delta$ since it has an additive monodromy~$2\int_{\partial w^\delta}F_{w^\delta}^\bullet=2\overline{\eta}{}^\delta$ around~$w^\delta$. However, the (real-valued) function
\begin{equation}
\label{eq:H-delta-def}
H^\delta\ :=\ -i\eta^\delta\rI_{i\overline{\eta}{}^\delta\R}[F_{w^\delta}]
\end{equation}
has \emph{zero} monodromy around~$w^\delta$ and hence is well defined on~$\cT^\delta$, up to a global additive constant. Moreover, the face~$w^\delta$ becomes a degenerate vertex in the T-graph~$\cT^\delta+(i\overline{\eta}{}^\delta)^2\cO^\delta$ and the function~$H^\delta$ is harmonic on this \mbox{T-graph} everywhere except this vertex. Replacing~$H^\delta$ by~$-H^\delta$ if needed we can also assume that~$H^\delta$ is super-harmonic at~$w^\delta$ (i.e., satisfies the minimum principle in a vicinity of~$w^\delta$).

Let~$\Gamma^\delta$ be a contour on~$\cT^\delta$ approximating the circle
\[
\Gamma:=\{z:|z-w|=2d_0\}.
\]
We now consider a (unique) decomposition
\begin{equation}
\label{eq:H=G+H0}
H^\delta\ =\ G^\delta+H^\delta_\mathrm{harm}\ \ \text{inside}\ \ \Gamma^\delta
\end{equation}
of the restriction of~$H^\delta$ onto the interior of~$\Gamma^\delta$ (unioned with an appropriate subset of $\Gamma^\delta$ consisting of all boundary vertices of the image of this interior in the corresponding T-graph~$\cT^\delta+(i\overline{\eta}{}^\delta)^2\cO^\delta$), where
\begin{itemize}
\item the function~$G^\delta$ has zero boundary values on~$\Gamma^\delta$ and is discrete harmonic in the interior of~$\Gamma^\delta$ except at the point~$w^\delta$;
\item the function~$H^\delta_\mathrm{harm}$ is discrete harmonic everywhere inside~$\Gamma^\delta$.
\end{itemize}
Let us also consider a t-white-holomorphic function
\[
F^\delta\ :=\ F_{w^\delta}-\rD[i\overline{\eta}{}^\delta H^\delta_\mathrm{harm}],
\]
see~Proposition~\ref{prop:t-hol=grad}. Note that $\rI_{i\overline\eta{}^\delta\R}[F^\delta]=H^\delta-H^\delta_\mathrm{harm}=G^\delta$ and it is easy to see that the complex-valued primitive~$\rI_\C[F^\delta]$ has the same additive monodromy $2\overline{\eta}{}^\delta$ around $w^\delta$ as~$\rI_\C[F_{w^\delta}]$. We start with a preliminary lemma.
\begin{lemma}\label{lem:conv-to-Green} In the setup of Theorem~\ref{thm:main-GFF}, the functions~$\frac{1}{2}G^\delta$ converge, as $\delta\to 0$, to the Dirichlet Green's function in~$B(w,2d_0)$, where the harmonicity is understood in the metric of the Lorentz-minimal surface~$\rS_\xi$. The convergence is uniform on compact subsets of~$B(w,2d_0)\smallsetminus\{w\}$.
\end{lemma}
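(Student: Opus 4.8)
The plan is to treat $\tfrac12 G^\delta$ as a discrete Dirichlet Green's function for the random walk on the \mbox{T-graph} $\cT^\delta+(i\overline{\eta}{}^\delta)^2\cO^\delta$, with pole at the degenerate vertex $w^\delta$ and zero boundary values on $\Gamma^\delta$, and to identify its scaling limit by combining the a priori regularity theory of Section~\ref{sub:regularity} with a flux (monodromy) computation. Recall that in two dimensions the Dirichlet Green's function is conformally invariant, so the target object---the Green's function of $B(w,2d_0)$ in the intrinsic metric of~$\rS_\xi$---is nothing but the ordinary planar Green's function pulled back through the conformal parametrization $\zeta$; in particular it has a logarithmic singularity $-\tfrac{1}{2\pi}\log|z-w|+O(1)$ at~$w$, or equivalently its harmonic conjugate has additive monodromy equal to $1$ around~$w$. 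On the discrete side the corresponding invariant is that $\rI_\C[F^\delta]$ has monodromy $2\overline{\eta}{}^\delta$ around~$w^\delta$; since $2\overline{\eta}{}^\delta\in\overline{\eta}{}^\delta\R$, the real harmonic conjugate of $G^\delta$ picks up exactly the increment $2$ when encircling $w^\delta$. This matching of fluxes is precisely why the normalization $\tfrac12 G^\delta$ (and not $G^\delta$ itself) is the right object.

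First I would establish the a priori bounds. By construction $H^\delta$ is superharmonic at $w^\delta$ and harmonic elsewhere, so the minimum principle on T-graphs gives $G^\delta\ge 0$ inside $\Gamma^\delta$. For the upper bound on annuli $\{r\le|z-w|\le 2d_0\}$ with $r>0$ fixed, I would use that $G^\delta$ is there a positive harmonic function whose flux through each circle $\{|z-w|=\rho\}$ is given by the monodromy of its harmonic conjugate, hence is $O(1)$ uniformly in $\delta$; the discrete Harnack principle (Proposition~\ref{prop:Harnack}) then gives $\max_{|z-w|=r}G^\delta=O(1)$. Consequently $\osc_K G^\delta=O(1)$ on compacts $K\subset B(w,2d_0)\smallsetminus\{w\}$, so Theorem~\ref{thm:F-via-H} bounds $F^\delta$ itself on such $K$ and Corollary~\ref{cor:F-via-H}, together with the H\"older estimate of Proposition~\ref{prop:H-Holder} for $G^\delta$, yields precompactness of both $\{F^\delta\}$ and $\{\tfrac12 G^\delta\}$ in the topology of uniform convergence on compact subsets of $B(w,2d_0)\smallsetminus\{w\}$.

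Next I would identify every subsequential limit $g$ of $\tfrac12 G^\delta$, uniqueness of the Green's function then upgrading this to full convergence. Away from $w$, $\tfrac12 G^\delta=\tfrac12\rI_{i\overline{\eta}{}^\delta\R}[F^\delta]$ is a harmonic primitive of the t-white-holomorphic function $F^\delta$; by Theorem~\ref{thm:F-precomp} the subsequential limit of $F^\delta$ produces a closed differential form of the type~\eqref{eq:closed-forms}, and Proposition~\ref{prop:hol-in-zeta} shows that its primitive---hence $g$---is harmonic in the intrinsic metric of $\rS_\xi$ on $B(w,2d_0)\smallsetminus\{w\}$. Since the circle $\Gamma$ lies in the bulk of $\Omega_\xi$ (it is at distance $\ge 2d_0$ from $\partial\Omega_\xi$), the interior equicontinuity applies across it, so $G^\delta|_{\Gamma^\delta}=0$ passes to $g|_{\partial B(w,2d_0)}=0$. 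Finally, the flux argument above pins down the singularity: the harmonic conjugate of $\tfrac12 G^\delta$ has monodromy exactly $1$ around $w^\delta$ for every $\delta$, a topologically stable quantity that is preserved in the limit, so $g$ carries precisely the logarithmic singularity $-\tfrac{1}{2\pi}\log|z-w|+O(1)$ at $w$. A nonnegative function that is harmonic in the $\rS_\xi$ metric on $B(w,2d_0)\smallsetminus\{w\}$, vanishes on the boundary, and has exactly this singularity is the Dirichlet Green's function, as claimed.

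The main obstacle I expect is the uniform control near the pole: establishing the $O(1)$ upper bound for $G^\delta$ on fixed circles $\{|z-w|=r\}$ and, more delicately, ensuring that the discrete logarithmic singularity resolves in the limit to the continuum logarithm with no loss of mass---i.e., that the flux through small circles really does converge to the flux of $g$ and is not partially absorbed by the irregular microscopic geometry of $\cT^\delta$ near $w^\delta$. The topological rigidity of the monodromy (which equals $2\overline{\eta}{}^\delta$ for \emph{all} $\delta$, being the additive monodromy of $\rI_\C[F_{w^\delta}]$ around $w^\delta$) is the crucial structural input here, but turning it into a quantitative statement about the limiting pole strength, uniformly over the choices of splitting and over the degeneracies allowed by~{\ExpFat}, is the technical heart of the argument.
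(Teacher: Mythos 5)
Your overall strategy for identifying the limit (precompactness via the a priori regularity theory, harmonicity on $\rS_\xi$ via Proposition~\ref{prop:hol-in-zeta}, zero boundary values on $\Gamma$, positivity, and the monodromy $2\overline{\eta}{}^\delta$ of $\rI_\C[F^\delta]$ pinning down the multiplicative constant) is exactly the paper's argument in its ``normal'' case. The gap is in your a priori upper bound near the pole, and you have correctly located it yourself: the inference ``the flux through each circle is $O(1)$, hence the discrete Harnack principle gives $\max_{|z-w|=r}G^\delta=O(1)$'' is not justified. Proposition~\ref{prop:Harnack} compares the maximum and the minimum of a positive harmonic function on a compact set; it says nothing about how either relates to the flux. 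To pass from a bounded flux to a bounded value you would need a quantitative discrete statement of the form ``a non-negative function, harmonic on the T-graph in the annulus, vanishing on $\Gamma^\delta$ and with flux $\Phi$ through the inner circle, satisfies $\max\le C(r)\,\Phi$'' --- i.e.\ a uniform discrete Green's function estimate, which is essentially the content of the lemma itself and is not among the tools of Section~\ref{sub:regularity}. (Moreover, identifying the ``flux'' of $G^\delta$ for the T-graph random walk with the increment of the conjugate projection of $\rI_\C[F^\delta]$ is itself a step requiring an argument when edge lengths and conductances are not uniformly controlled, which is exactly the regime allowed by {\ExpFat}.)

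The paper avoids this difficulty entirely by a soft dichotomy. Fix a reference point $v_0$ at distance $d_0$ from $w$. If $G^\delta(v_0^\delta)$ stays bounded, the Harnack principle propagates this bound to all compact subsets of the punctured disc and your limit-identification argument goes through verbatim. If $G^\delta(v_0^\delta)\to+\infty$ along a subsequence, one renormalizes, $\widetilde G^\delta:=G^\delta/G^\delta(v_0^\delta)$, runs the same compactness machinery on the renormalized functions, and observes that their limit has \emph{zero} monodromy around $w$ (the fixed monodromy $2\overline{\eta}{}^\delta$ gets divided by a quantity tending to infinity); a non-negative function, harmonic in the metric of $\rS_\xi$ on the punctured disc, with zero boundary values and no monodromy of its conjugate, vanishes identically, contradicting the normalization $\widetilde G^\delta(v_0^\delta)=1$. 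This normalization-and-contradiction step is the missing piece in your write-up; the remainder of your proposal is sound and coincides with the paper's treatment of the normal case.
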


\begin{remark} In what follows we rely upon the fact that the assumption $\LipKd$\ holds {up to the boundary} of the auxiliary domain~$B(w,2d_0)$. We are not aware of a similar argument that could be used directly in the domain~$\Omega_\xi$.
\end{remark}

\begin{proof}[Proof of Lemma~\ref{lem:conv-to-Green}] Without loss of generality we can assume that~$\eta^\delta\to\eta$ as~$\delta\to 0$ due to compactness arguments. Let~$v_0$ be a fixed `reference' point inside the disc~$B(w,2d_0)$ lying, say, at distance~$d_0$ from~$w$. Recall that the functions~$G^\delta$ are discrete super-harmonic (on the corresponding T-graphs) except at~$w^\delta$ and thus are non-negative. We now consider two cases:

\smallskip

\noindent \emph{Case~A (`normal'). The values~$G^\delta(v_0^\delta)$ remain bounded as~$\delta\to 0$.} In this case the discrete Harnack principle (see Proposition~\ref{prop:Harnack}) implies that the functions~$G^\delta$ are also uniformly bounded on all compact subsets of the punctured disc~$B(w,2d_0)\smallsetminus\{w\}$. Applying the a priori regularity theory (see Theorem~\ref{thm:F-via-H} and Corollary~\ref{cor:F-via-H}) to \mbox{t-holomorphic} functions $F^\delta=\rD[G^\delta]$  one can find a subsequential limit
\[
F^{\tw,\delta}\ \rightrightarrows\ \fw:B(w,2d_0)\smallsetminus\{w\}\to \C\ \ \text{as}\ \ \delta\to 0,
\]
where the convergence is uniform on compact subsets. This also implies that
\[
G^\delta(\cdot)\ \rightrightarrows\ g_\R(\cdot):=\pm\Re[i\eta g_\C(\cdot)]\,,\quad g_\C(v)=\pm\int^v (\fw(z)dz+\overline{\fw(z)}d\overline{\theta(z)}\,)\,,
\]
also uniformly on compact subsets of the punctured disc~$B(w,2d_0)\smallsetminus\{w\}$. Due to Proposition~\ref{prop:hol-in-zeta} the function~$g_\C$ is harmonic in the metric of the surface~$\rS_\xi$. Since the functions~$H^\delta$ are non-negative, so is their limit~$g_\R$. Moreover, the uniform crossing estimates for random walks on T-graphs developed in~\cite[Section~6]{CLR1} imply that~$g_\R$ has zero boundary values on~$\Gamma$ since
\[
H^\delta(v)\to 0\ \ \text{as}\ \ v\to\Gamma,\ \ \text{uniformly in~$\delta$}.
\]
(Let us emphasize that here we rely upon the fact that the assumption \LipKd\ holds \emph{up to the boundary} of the disc~$B(w,2d_0)$ and not only on its compact subsets.) Therefore, $g_\R$ must be a multiple of the Green function in~$B(w,2d_0)$. It remains to note that the additive monodromy of the complex-valued function~$g_\C$ around~$w$ equals~$2$ since this monodromy can be computed along the circle $\{z:|z-w|=d_0\}$, on which we have the convergence~$F^{\circ,\delta}\to \fw$. Therefore, $\frac12 g_\R$ is the Green function in~$B(w,2d_0)$.

\smallskip

\noindent \emph{Case B (`pathological'). One has~$G^\delta(v_0^\delta)\to +\infty$ for a sequence \mbox{$\delta={\delta_m}\to 0$}.} In order to rule out this scenario let us consider renormalized functions
\[
\widetilde{G}^\delta(\cdot)\ :=\ G^\delta(\cdot)/G^\delta(v_0^\delta).
\]
By definition, the values~$\widetilde{G}^\delta(v_0^\delta)$ are bounded and hence one can apply the arguments from Case~A to these functions. Since~$G^\delta(v_0^\delta)\to +\infty$, the limiting function~$\widetilde{h}_\C$ has \emph{no} additive monodromy around~$w$, which means that
\[
\widetilde{G}^\delta\ \rightrightarrows\ \widetilde{h}_\R=0
\]
uniformly on compact subsets of the punctured disc~$B(w,2d_0)\smallsetminus\{w\}$. However, such a convergence contradicts to the normalization~$\widetilde{G}^\delta(v^\delta_0)=1$.
\end{proof}

We are now in the position to complete Step~2 in the proof of Theorem~\ref{thm:K-1=O(1)}.

\begin{proof}[Proof of Step~2] Recall that the functions~$H^\delta$ are defined (up to global additive constants) by~\eqref{eq:H-delta-def} and note that
\begin{equation}
\label{eq:osc-H=O(1)}
\osc_{\partial\cT^\delta}(H^\delta)\ =\ O(1)\ \ \text{as}\ \ \delta\to 0
\end{equation}
due to the uniform estimate~$F_{w^\delta}=O(1)$ on~$\partial\cT^\delta$ that was obtained on Step~1 and since the lengths of the boundaries of perfect t-embeddings~$\cT^\delta$ converging to the domain~$\Omega_\xi$ are bounded uniformly in~$\delta$. (Let us mention once again that these~$O(1)$ constants may depend on~$d_0$.)

Recall that the discrete contour~$\Gamma^\delta$ approximates the boundary of the disc~$B(w,2d_0)$. Somewhat similarly to the proof of Lemma~\ref{lem:conv-to-Green} we now consider two cases.

\smallskip

\noindent \emph{Case~A. The oscillations~$\osc_{\Gamma^\delta}(H^\delta)$ remain bounded as~$\delta\to 0$.} In this case the desired estimate easily follows from Lemma~\ref{lem:conv-to-Green} via the a priori regularity theory for t-holomorphic functions. Indeed, consider the decomposition~\eqref{eq:H=G+H0} and note that oscillations of the harmonic function~$H^\delta_\mathrm{harm}$ inside the closed contour~$\Gamma^\delta$ are bounded by~$\osc_{\Gamma^\delta}(H^\delta_\mathrm{harm})=\osc_{\Gamma^\delta}(H^\delta)=O(1)$ due to the maximum principle. Together with Lemma~\ref{lem:conv-to-Green} on the convergence of~$G^\delta$ as~$\delta\to 0$ this yields the estimate
\[
\osc_K(H^\delta)=O(1)\ \ \text{as}\ \ \delta\to 0
\]
for each compact set~$K\subset B(w,2d_0)\smallsetminus\{w\}$. Then, Theorem~\ref{thm:F-via-H} implies that the gradients~$F_{w^\delta}=\rD[H^\delta]$ of these harmonic functions are also uniformly bounded on compact subsets of the punctured disc~$B(w,2d_0)\smallsetminus\{w\}$.

\smallskip

\noindent \emph{Case B. One has~$Q^\delta:=\osc_{\Gamma^\delta}(H^\delta)\to+\infty$ as~$\delta={\delta_m}\to 0$.} In order to rule out this `pathological' scenario let us consider renormalized functions
\[
\widetilde{H}^\delta\ :=\ H^\delta/Q^\delta,
\]
where additive constants in the definition of~$H^\delta$ are chosen so that
\begin{itemize}
\item the minimum of~$\widetilde{H}^\delta$ on the contour~$\Gamma^\delta$ equals~$0$, let us denote by~$v_0^\delta$ the point at which this minimum is attained;
\item the maximum of~$\widetilde{H}^\delta$ on~$\Gamma^\delta$, attained at a point~$v_1^\delta$, equals~$1$.
\end{itemize}
Recall that the function~$H^\delta$ is super-harmonic and has uniformly bounded oscillations on~$\partial\cT^\delta$. This implies that~$\widetilde{H}^\delta|_{\partial\cT^\delta}\le  O((Q^\delta)^{-1})=o_{\delta\to 0}(1)$. (However, at least a priori, we do \emph{not} know that~$|\widetilde{H}^\delta|_{\partial\cT^\delta}|=o_{\delta\to 0}(1)$ since it can happen that~$\max_{\Gamma^\delta} H^\delta\gg\osc_{\Gamma^\delta} H^\delta$.)

Due to the maximum principle, there exists a discrete path~$\gamma^\delta$ that goes from~$v_1^\delta$ to (the boundary of) $w^\delta$ such that~$\widetilde{H}^\delta\ge 1$ along~$\gamma^\delta$. (Note that this path could not have ended at~$\partial\cT^\delta$ since~$\widetilde{H}^\delta\le o(1)<1$ everywhere at~$\partial\cT^\delta$.) Consider now a decomposition~$\widetilde{H}^\delta=\widetilde{G}^\delta+\widetilde{H}^\delta_\mathrm{harm}$ inside the contour~$\Gamma^\delta$ similarly to~\eqref{eq:H=G+H0}. It follows from Lemma~\ref{lem:conv-to-Green} and from the assumption~$Q^\delta\to +\infty$ that the first term~$\widetilde{G}^\delta$ converges to zero as~$\delta\to 0$, uniformly on compact subsets of the punctured disc~$B(w,2d_0)\smallsetminus\{w\}$. The second terms~$\widetilde{H}^\delta_\mathrm{harm}$ are uniformly bounded on~$\Gamma^\delta$ and hence inside this contour as well. As above, this allows us to find a subsequential limit~$\widetilde{H}^\delta_\mathrm{harm}\rightrightarrows \widetilde{h}_\mathrm{harm}$ (the convergence is uniform on compact subsets of~$B(w,2d_0)$), which must be a harmonic function in the metric of the surface~$\rS_\xi$ and satisfies the estimates~$0\le\widetilde{h}_\mathrm{harm}\le 1$. Since~$\widetilde{G}^\delta$ vanishes as~$\delta\to 0$ we also have the uniform convergence
\[
\widetilde{H}^\delta\rightrightarrows \widetilde{h}_\mathrm{harm}\ \ \text{on compact subsets of~$B(w,2d_0)\smallsetminus\{w\}$}.
\]
Therefore, the existence of the discrete paths~$\gamma^\delta$ along which one has~$\widetilde{H}^\delta\ge 1$ implies that~$\widetilde{h}_\mathrm{harm}$ is constantly equal to~$1$.

Now note that, for small enough~$\delta$, the discrete harmonic functions~$2-\widetilde{H}^\delta$ have to be positive everywhere on~$\cT^\delta$ outside the ball~$B(w,d_0)$ due to the discrete maximum principle. Moreover, we have $2-\widetilde{H}^\delta(v^\delta_1)=1$. The discrete Harnack principle (see Proposition~\ref{prop:Harnack}) implies that these functions are uniformly bounded on compact subsets of~$\Omega_\xi\smallsetminus B(w,d_0)$ and hence admit subsequential limits
\begin{equation}
\label{eq:x-conv-h-tilde}
2-\widetilde{H}^\delta\ \rightrightarrows\ 2-\widetilde{h}_\mathrm{harm}=1\ \ \text{on compact subsets of~$\Omega_\xi\smallsetminus B(w,d_0)$};
\end{equation}
where the function~$\widetilde{h}_\mathrm{harm}$ is harmonic in the metric of the surface~$\rS_\xi$ and thus is constantly equal to~$1$ since we already know that this function is constant in the annulus~$B(w,2d_0)\smallsetminus B(w,d_0)$.

The uniform convergence~\eqref{eq:x-conv-h-tilde} near the contours~$\Gamma^\delta$ clearly contradicts to the existence of the points~$v^\delta_0$ such that~$\widetilde{H}^\delta(v^\delta_0)=0$. This completes both Step~2 and the proof of Theorem~\ref{thm:K-1=O(1)}.
\end{proof}


\subsection{Convergence to the GFF} \label{sub:convergence}
This section closely follows the proof of~\cite[Theorem~1.4]{CLR1}. It is worth noting that instead of the 'small origami' assumption (i.e. $\cO^\delta\to 0$ as  $\delta\to 0$) used in~\cite{CLR1}, we now assume that the graphs~$(\cT^\delta,\cO^\delta)$ of the origami maps~$\cO^\delta$ converge to a Lorentz-minimal surface~$\cS_\xi\subset\R^{2+1}\subset\R^{2+2}$. However, most of the arguments used in~\cite[Section~7]{CLR1} to show the convergence of the height fluctuations to the GFF in a planar domain~$\Omega$ can be repeated (almost) verbatim in our setup after passing to the conformal parametrization of the surface~$\rS_\xi$ instead of the Euclidean metric in~$\Omega$. The only essential difference is in the analysis of the boundary values of the correlation functions~$H_n^\delta$. As we are unable to prove a stronger version of Theorem~\ref{thm:K-1=O(1)} for \emph{both} points~$w,b$ lying close to~$\partial\cT^\delta$ and do not want to introduce an additional assumption similar to Assumption~(III) in \cite[Theorem~1.4]{CLR1}, we cannot identify the limits of the functions~$H_n^\delta$ themselves even though we identify the limits of their gradients. The latter turns out to be possible due to the fact that in Theorem~\ref{thm:K-1=O(1)} \emph{one} of the points~$w,b$ is allowed to be close to~$\partial\cT^\delta$.

Recall that complexified (in each of the two variables) dimer coupling functions $\FF{\pm\pm}_{\cT^\delta}(\bc,\wc)$ are defined in Proposition~\ref{prop:Fpmpm-def}. Due to Theorem~\ref{thm:K-1=O(1)} these functions are uniformly bounded as~$\delta\to 0$ provided that the points~$\bc,\wc$ stay at a definite distance from each other and at least one of them stays at a definite distance from~$\partial\Omega_\xi$. Therefore, the a priori regularity theory for t-holomorphic functions (see Corollary~\ref{cor:Fpmpm-hol-in-zeta}) implies the existence of \emph{subsequential} limits
\begin{equation}
\label{eq:fpmpm-def}
\FF{\pm\pm}_{\cT^\delta}(\bc,\wc)\,\to\,\ff{\sspm\sspm}(z_1,z_2)\ \ \text{if}\ \ \bc\to z_1,\ \wc\to z_2\ \text{as}\ \delta={\delta_m}\to 0.
\end{equation}
The convergence is uniform provided that the points $z_1,z_2$ remain at a definite distance from each other and from~$\partial\Omega_\xi$. Moreover, the limits~$\ff{\sspm\sspm}$ are uniformly bounded if $z_1,z_2$ stay away from each other and \emph{at least one} of these points stays away from the boundary~$\partial\Omega_\xi$ (see Theorem~\ref{thm:K-1=O(1)}).

Recall that $\zeta\in \mathbb{D}\leftrightarrow (z,\olim)\in\rS_\xi$ is a conformal parametrization of the surface~$\rS_\xi$ (see~\eqref{eq:conf-param}) and define
\[
\textstyle \ppsi{r_1,r_2}(\zeta_1,\zeta_2)\ :=\ \sum_{s_1,s_2\in\{\pm\}}\bb{s_1}{r_1}(\zeta_1)\aa{s_2}{r_2}(\zeta_2)\ff{s_1,s_2}(z(\zeta_1),z(\zeta_2))
\]
for~$r_1,r_2\in\{\pm\}$, where~$\bb\sspm\sspm(\zeta_1)$ and~$\aa\sspm\sspm(\zeta_2)$ are given by~\eqref{eq:bb-def} and~\eqref{eq:aa-def}. It follows from Proposition~\ref{prop:psipmpm-def} that
\begin{itemize}
\item $\ppsi{\ssm\ssm}(\zeta_1,\zeta_2)=\overline{\ppsi{\ssp\ssp}(\zeta_1,\zeta_2)}$ and $\ppsi{\ssp\ssm}(\zeta_1,\zeta_2)=\overline{\ppsi{\ssm\ssp}(\zeta_1,\zeta_2)}$\,;
\item the functions $\ppsi{\sspm\ssp}(\zeta_1,\cdot)$ are holomorphic in $\mathbb{D}\smallsetminus\{\zeta_1\}$ for each $\zeta_1\in\mathbb{D}$;
\item the functions $\ppsi{\ssp\sspm}(\cdot,\zeta_2)$ are holomorphic in $\mathbb{D}\smallsetminus\{\zeta_2\}$ for each $\zeta_2\in\mathbb{D}$.
\end{itemize}

\begin{proposition}[{cf.~\cite[Proposition~7.1]{CLR1}}] \label{prop:fpmpm} In the setup of Theorem~\ref{thm:main-GFF}, the following asymptotics as~$\zeta_2\to\zeta_1\in\mathbb{D}$ hold for all subsequential limits~\eqref{eq:fpmpm-def}:
\begin{equation}
\label{eq:fpmpm-res}
\ppsi{\ssp\ssp}(\zeta_1,\zeta_2)=\frac{2}{\pi i}\cdot \frac{1}{\zeta_2-\zeta_1}+O(1)\quad \text{and}\quad \ppsi{\ssm\ssp}(\zeta_1,\zeta_2)=O(1).
\end{equation}
\end{proposition}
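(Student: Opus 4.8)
The plan is to read off the diagonal singularity of the functions $\ppsi{\sspm\ssp}$, which are holomorphic in the conformal coordinate by Proposition~\ref{prop:psipmpm-def}, from a single additive monodromy of the inverse Kasteleyn coupling function, and then to use the conformal covariance built into the weights $\bb{}{},\aa{}{}$ to fix the residue to the universal value $\tfrac{2}{\pi i}$.

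First I would record the analytic structure already available. By Proposition~\ref{prop:psipmpm-def}, for fixed $\zeta_2$ the function $\ppsi{\ssp\ssp}(\cdot,\zeta_2)$ is holomorphic in $\zeta_1$ on a punctured neighbourhood of $\zeta_2$, and by the symmetry $\ppsi{\ssp\ssm}=\overline{\ppsi{\ssm\ssp}}$ from part (i) the function $\ppsi{\ssm\ssp}(\cdot,\zeta_2)$ is anti-holomorphic there; moreover both are $O(1)$ as soon as $\zeta_1$ stays at a definite distance from $\zeta_2$, by Theorem~\ref{thm:K-1=O(1)}. Hence the only possible singularities sit on the diagonal and are isolated, so their principal parts are governed by the periods $\oint \ppsi{\sspm\ssp}(\zeta_1,\zeta_2)\,d\zeta_1$ around $\zeta_2$, \emph{provided} one first rules out poles of order higher than one. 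Writing $\ppsi{\ssp r}(\zeta_1,\zeta_2)=\aa{\ssp}{r}(\zeta_2)\Psi_{\ssp}(\zeta_1)+\aa{\ssm}{r}(\zeta_2)\Psi_{\ssm}(\zeta_1)$ with $\Psi_{s}:=\bb\ssp\ssp\ff{\ssp s}+\bb\ssm\ssp\ff{\ssm s}$ (the holomorphic functions produced in the proof of Proposition~\ref{prop:psipmpm-def}), I see that the residues of both $\ppsi{\ssp\ssp}$ and $\ppsi{\ssp\ssm}$ are assembled from the two numbers $\mathrm{Res}_{\zeta_2}\Psi_{\ssp}$ and $\mathrm{Res}_{\zeta_2}\Psi_{\ssm}$; the claim is exactly that these two numbers conspire so that $\ppsi{\ssp\ssp}$ acquires the universal residue while $\ppsi{\ssp\ssm}=\overline{\ppsi{\ssm\ssp}}$ stays bounded.

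Second, I would compute the two periods from the dimer model. The primitive $\rI_\C[F_b]$ carries additive monodromy $-2\overline{\eta}_b$ around the puncture $b$ (Kasteleyn's identity recalled after Definition~\ref{def:I[F]}), and passing to the limit the closed form attached to $F_b^\tb=\tfrac12(\overline{\eta}_b\FF{++}(\cdot,\wc)+\eta_b\FF{+-}(\cdot,\wc))$ inherits the same monodromy $-2\overline{\eta}_b$ around $z_2$. Decomposing this form as $\tfrac12\overline{\eta}_b\,(\ff{\ssp\ssp}dz+\ff{\ssm\ssp}d\vartheta)+\tfrac12\eta_b\,(\ff{\ssp\ssm}dz+\ff{\ssm\ssm}d\vartheta)$ and separating the two intrinsic, $\eta_b$-independent forms --- either by using the freedom in $\eta$ in Proposition~\ref{prop:Fpmpm-def}(iii) together with the extra-edge perturbation trick of Lemma~\ref{lem:same-sign}, or by letting faces $b$ with distinct phases $\eta_b$ accumulate at $z_2$ --- I would obtain that $\ff{\ssp\ssp}dz+\ff{\ssm\ssp}d\vartheta$ has period $-4$ around $z_2$ while $\ff{\ssp\ssm}dz+\ff{\ssm\ssm}d\vartheta$ has period $0$. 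Since the period of a closed form is invariant under the orientation-preserving homeomorphism $z=z(\zeta)$, these two values transfer verbatim to the conformal coordinate; a routine computation with the half-power weights $\bb\ssp\ssp=\aa\ssp\ssp=(\pazeta z)^{1/2}$ --- in which the local factor $\pazeta z$ cancels between the two weights --- then converts them into the two conditions that yield $\ppsi{\ssp\ssp}(\zeta_1,\zeta_2)=\tfrac{2}{\pi i}(\zeta_2-\zeta_1)^{-1}+O(1)$ and $\ppsi{\ssm\ssp}(\zeta_1,\zeta_2)=O(1)$. To legitimise reading the principal part off the period I would invoke the a priori theory: a near-diagonal bound $\FF{\pm\pm}_{\cT^\delta}=O(|\bc-\wc|^{-1})$, coming from the discrete Harnack principle (Proposition~\ref{prop:Harnack}) and the crossing estimates behind Theorem~\ref{thm:F-via-H} applied to the monopole-type source at $b$, passes to the limit and shows that the singularities of $\ppsi{\sspm\ssp}$ are at most simple poles, so that subtracting the pole dictated by the period leaves a bounded function.

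The main obstacle I anticipate is precisely this disentangling of the two period contributions, carried out uniformly along the subsequential limit. The phases $\eta_b=\eta_b^\delta$ need not converge, whereas the $\eta$-stripped functions $\FF{\pm\pm}$ do, so one must argue \emph{intrinsically} that the forms $\ff{\ssp\pm}dz+\ff{\ssm\mp}d\vartheta$ have the clean periods $-4$ and $0$ rather than some $\eta$-dependent mixture; and one must upgrade the off-diagonal bound of Theorem~\ref{thm:K-1=O(1)} to the near-diagonal $O(\mathrm{dist}^{-1})$ estimate required both to exclude higher-order poles and to force the cross term $\ppsi{\ssm\ssp}$ to be bounded. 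Once these two points are settled, the conformal-covariance bookkeeping and the emergence of the exact constant $\tfrac{2}{\pi i}$ are routine.
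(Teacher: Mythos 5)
Your overall strategy coincides with the paper's in its two main mechanisms: the residues are extracted from the additive monodromy $\mp 2\overline{\eta}$ of the primitives $\rI_\C[F_b]$, $\rI_\C[F_w]$ around the puncture, the two closed forms are disentangled by letting faces with essentially distinct phases $\eta$ accumulate at the singular point (this is exactly the role of \cite[Lemma~6.3]{CLR1} in the paper; the extra-edge trick of Lemma~\ref{lem:same-sign} is not relevant here), and the transfer to the conformal coordinate is done through the weights $\bb\sspm\sspm,\aa\sspm\sspm$. Your bookkeeping is also correct: the two period equations form a \emph{real}-linear (not complex-linear) system in $(\mathrm{Res}\,\Psi_\ssp,\mathrm{Res}\,\Psi_\ssm)$ which couples each residue to the conjugate of the other, and it is invertible precisely because $|\bb\ssp\ssp|>|\bb\ssm\ssp|$; solving it does give the residue $\tfrac{2}{\pi i}$ for $\ppsi{\ssp\ssp}$ and forces the residue of $\ppsi{\ssm\ssp}$ to vanish. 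Note, however, that a zero period alone does not kill a simple pole of a form of the type $g\,d\zeta+\overline{h}\,d\overline{\zeta}$ --- it is only this coupled system together with the space-like inequality that does --- so this point is not as ``routine'' as you suggest and should be made explicit.

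The genuine gap is the exclusion of higher-order poles, without which the period computation says nothing about the leading singularity. You propose to get it from a near-diagonal bound $\FF{\pm\pm}_{\cT^\delta}(\bc,\wc)=O(|\bc-\wc|^{-1})$. No such bound is available: Theorem~\ref{thm:K-1=O(1)} gives $O(1)$ only at a \emph{fixed} separation $d_0$, with constants whose dependence on $d_0$ is not tracked (the remark after Step~1 explicitly warns it may be worse than $O(d_0^{-1})$, since the constants of the a~priori theory depend on $\kappa$ and on the scale $\delta$ in {\LipKd}/{\ExpFat}, which do not rescale cleanly); making the Harnack and crossing machinery uniform over shrinking scales would essentially amount to redoing Step~2 of Theorem~\ref{thm:K-1=O(1)} at every scale. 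The paper sidesteps this entirely by a soft argument that your proposal is missing: pairing the limit $\fw$ with a \emph{constant} test function $\fb=i\eta$, the primitive $\int\Re[\,\psi_\frb\psi_\frw\,d\zeta\,]$ is a single-valued harmonic function which inherits a one-sided maximum principle at the puncture from the super-harmonicity of $H^\delta$ at $w^\delta$ on the T-graph; a harmonic function in a punctured disc with a one-sided maximum principle at the puncture equals $c\log|\zeta-\zeta_1|^{-1}$ plus a function harmonic across, so $\psi_\frb\psi_\frw$ has at most a simple pole with real residue. Varying $\eta$ then both separates the two contributions and pins down the residue from the monodromy of $\int\Re[\eta(\fw\,dz+\ofw\,d\overline{\vartheta})]$, which equals $2$. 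Unless you either import this maximum-principle step or actually prove the quantitative near-diagonal estimate, your argument does not close.
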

\begin{proof}
Fix a point~$\zeta_1\in\mathbb{D}$ and consider a sequence~$w^\delta\to z_1:=z(\zeta_1)$. Assume also that the directions~$\eta_{w^\delta}$ converge to a certain direction~$\eta\in\mathbb{T}$ as~$\delta\to 0$ (due to compactness arguments this can be always done by passing to a subsequence). Due to Proposition~\ref{prop:Fpmpm-def}(ii) this implies the convergence
\[
F_{\!w^\delta}^\tw(\cdot)\ \to\ \fw(\cdot):=\tfrac12\big(\overline{\eta}\ff{\ssp\ssp}(z_1,\cdot)+\eta\ff{\ssm\ssp}(z_1,\cdot)\big)\ \ \text{as}\ \ \delta\to 0,
\]
uniformly on compact subsets of~$\Omega_\xi\smallsetminus\{z_1\}$. Let
\[
\psi_\frw(\zeta)\ :=\ \aa\ssp\ssp(\zeta)\cdot \fw(z(\zeta))+\aa\ssm\ssp(\zeta)\cdot \overline{\fw(z(\zeta))}\,;
\]
recall that the function~$\psi_\frw$ is holomorphic in~$\mathbb{D}\smallsetminus\{\zeta_1\}$ due to Proposition~\ref{prop:hol-in-zeta}.
A straightforward computation shows that
\[
\psi_\frw(\cdot)\ =\ \tfrac12\big(\overline{\mu}\ppsi{\ssp\ssp}(\zeta_1,\cdot)+\mu\ppsi{\ssm\ssp}(\zeta_1,\cdot)\big)
\]
provided that~$\overline{\eta}=\overline{\mu}\bb\ssp\ssp(\zeta_1)+\mu\bb\ssp\ssm(\zeta_1)$, which can be written as
\[
\mu\ =\ \frac{\eta\cdot\bb\ssp\ssp(\zeta_1)-\overline{\eta}\cdot\bb\ssm\ssp(\zeta_1)}{|\bb\ssp\ssp(\zeta_1)|^2-|\bb\ssm\ssp(\zeta_1)|^2}\,.
\]
The convergence of t-white-holomorphic function~$F_{\!w^\delta}$ to~$\fw$ implies that
\begin{itemize}
\item $\int (\fw(z)dz+\overline{\fw(z)}d\overline{\olim(z)})$ has an additive monodromy~$2\overline{\eta}$ around~$z_1$;
\item moreover, the projection of this primitive onto the direction~$i\overline{\eta}$ satisfies a one-sided maximum principle in a vicinity of~$z_1$.
\end{itemize}

Let us now consider a constant function~$\fb:=i\eta$ and denote, as usual,
\[
\psi_\frb(\zeta)\ := \bb\ssp\ssp(\zeta)\cdot \fb(z(\zeta))+\bb\ssm\ssp(\zeta)\cdot \overline{\fb(z(\zeta))}\ =\ i(\eta\bb\ssp\ssp(\zeta)-\overline{\eta}\bb\ssm\ssp(\zeta));
\]
recall that~$\psi_\frb$ is holomorphic in~$\mathbb{D}$ and note that
\[
\psi_\frb(\zeta_1)\ =\ i\mu\cdot (|\bb\ssp\ssp(\zeta_1)|^2-|\bb\ssm\ssp(\zeta_1)|^2).
\]
We know that the harmonic function
\begin{align*}
\textstyle \Re\big[i\eta\int (\fw dz+\ofw d\overline{\olim})\big]\ =\ \int\Re[\,\fb\fw dz+\ofb\fw d\olim\,]\ =\ \int\Re[\,\psi_\frb\psi_\frw d\zeta\,]
\end{align*}
is well defined and satisfies a one-sided maximum principle near~$\zeta_1$. Therefore, the function~$\psi_\frw\psi_\frb$ has at most a simple pole at~$\zeta_1$, which means that the function
\begin{align*}
\psi_\frb(\zeta_1)\psi_\frw(\cdot)\ &=\ \tfrac{1}{2}\big(\overline{\mu}\psi_\frb(\zeta_1)\cdot \ppsi{\ssp\ssp}(\zeta_1,\cdot)+\mu\psi_\frb(\zeta_1)\cdot \ppsi{\ssm\ssp}(\zeta_1,\cdot)\big)\\
&=\ \tfrac{i}{2}(|\bb\ssp\ssp(\zeta_1)|^2-|\bb\ssm\ssp(\zeta_1)|^2)\cdot (|\mu|^2\ppsi{\ssp\ssp}(\zeta_1,\cdot)+\mu^2\ppsi{\ssm\ssp}(\zeta_1,\cdot))
\end{align*}
also has at most a simple pole at~$\zeta_1$. Moreover, its residue at~$z_1$ has to be purely real. It is easy to see from~\cite[Lemma~6.3]{CLR1} that each small (of size~$O(\delta)$) vicinity of~$z_1$ contains two points~$w^\delta_1,w^\delta_2$ such that~$2-\varepsilon\ge |\eta_{w^\delta_1}-\eta_{w^\delta_2}|\ge \varepsilon$, where~$\varepsilon>0$ can depend on~$z_1$ but is independent of~$\delta$. By varying~$\eta$, this observation allows us to conclude that the first term~$\ppsi{\ssp\ssp}(\zeta_1,\cdot)$ has at most a simple pole at~$\zeta_1$, with a purely imaginary residue, while the second term~$\ppsi{\ssm\ssp}(\zeta_1,\cdot)$ does not have any singularity.

It remains to compute this residue. Consider another constant function $\fb:=\eta$ instead of~$\fb=i\eta$ and recall that the additive monodromy of the primitive $\int \Re[\eta(\fw dz+\ofw d\overline\olim)]$ around~$z_1$ equals~$2$. A straightforward computation similar to the one made above shows that this monodromy is
\begin{align*}
\tfrac 12\Re\big[\,&\overline\mu\cdot (\eta\bb\ssp\ssp(\zeta_1)+\overline{\eta}\bb\ssm\ssp(\zeta_1))\cdot 2\pi i\,\mathrm{res}_{\zeta_1}\ppsi{\ssp\ssp}(\zeta_1,\cdot)\,\big]\\
& =\ \Re\big[\,\pi i\,\mathrm{res}_{\zeta_1}\ppsi{\ssp\ssp}(\zeta_1,\cdot)\,\big]\ =\ \pi i\,\mathrm{res}_{\zeta_1}\ppsi{\ssp\ssp}(\zeta_1,\cdot)
\end{align*}
since we already know from the preceding reasoning that the residue of the function $\ppsi{\ssp\ssp}(\zeta_1,\cdot)$ belongs to~$i\R$. The proof of~\eqref{eq:fpmpm-res} is complete.
\end{proof}
Following~\cite[Section~7]{CLR1}, let us introduce differential forms
\begin{equation}
\label{eq:An-def}
\mathcal{A}_n(\zeta_1,\ldots,\zeta_n)\,:=\,4^{-n}\hskip -12pt\sum_{{s_1,\ldots,s_n\in\{\pm\}}}\det\big[\mathbbm{1}_{j\ne k}\ppsi{s_j,s_k}(\zeta_j, \zeta_k)\big]_{j,k=1}^n{\prod_{k=1}^n} d\zzeta{s_k}_k\,,
\end{equation}
where~$d\zzeta\ssp_k:=d\zeta_k$ and~$d\zzeta\ssm_k:=d\overline{\zeta}_k$. The next proposition repeats the classical computation of Kenyon~\cite{kenyon-gff-a,kenyon-gff-b} on t-embeddings that converge to a Lorentz-minimal surface in~$\R^{2+2}$.
\begin{proposition}[{cf.~\cite[Proposition~7.2]{CLR1}}]
\label{prop:dH-limit}
In the setup of Theorem~\ref{thm:main-GFF}, assume that the functions~$\FF{\pm\pm}$ have subsequential limits~\eqref{eq:fpmpm-def}. Let~$\ppsi{\sspm\sspm}$ be defined by~\eqref{eq:psipmpm-def} and the differential form~$\mathcal{A}_n$ be given by~\eqref{eq:An-def}. Then,
\begin{align}
\sum_{r_1,\ldots,r_n\in\{1,2\}}&(-1)^{r_1+\ldots r_n}\,H_n^\delta(v^\delta_{1,r_1},\ldots,v^\delta_{n,r_n}) \notag\\[-4pt]  &\to\ \int_{\zeta(v_{1,1})}^{\zeta(v_{1,2})}\!\!\ldots\int_{\zeta(v_{n,1})}^{\zeta(v_{n,2})}\mathcal{A}_n(\zeta_1,\ldots,\zeta_n)
\label{eq:dH-limit}
\end{align}
as~$\delta\to 0$. The multiple integral can be evaluated over an arbitrary collection of pairwise non-intersecting paths~$\zeta(\gamma_k)$ linking~$\zeta(v_{k,1})$ and~$\zeta(v_{k,2})$; in particular, $\mathcal{A}_n$ is an exact form in each of the variables~$\zeta_1,\ldots, \zeta_n\in\mathbb{D}$. {The~convergence is uniform provided that the paths~$\gamma_1,\ldots,\gamma_n\subset\Omega_\xi$ remain at a definite distance from each other and from~$\partial\Omega_\xi$.}
\end{proposition}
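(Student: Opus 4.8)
The plan is to adapt the computation of~\cite[Proposition~7.2]{CLR1}, replacing the Euclidean metric of a planar domain by the conformal parametrization $\zeta$ of the surface $\rS_\xi$. First I would rewrite the alternating sum on the left-hand side of~\eqref{eq:dH-limit} as a single centered moment. By the definition~\eqref{eq:Hn-def} of $H_n^\delta$ and multilinearity,
\[
\sum_{r_1,\ldots,r_n\in\{1,2\}}(-1)^{r_1+\ldots+r_n}\,H_n^\delta(v^\delta_{1,r_1},\ldots,v^\delta_{n,r_n})\ =\ \E\Big[\,\prod_{k=1}^n\big(\hbar^\delta(v^\delta_{k,2})-\hbar^\delta(v^\delta_{k,1})\big)\,\Big].
\]
Each increment is then expressed, through Thurston's height function, as a signed sum $\hbar^\delta(v^\delta_{k,2})-\hbar^\delta(v^\delta_{k,1})=\sum_{e\in\gamma_k}\sigma_e\big(\mathbbm{1}[e\in\cD]-\P[e\in\cD]\big)$ over the edges of $\G$ dual to a lattice path $\gamma_k$ joining $v^\delta_{k,1}$ to $v^\delta_{k,2}$, where $\sigma_e=\pm1$ records the orientation of the crossing. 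Since the height function is single-valued, this quantity is independent of the choice of $\gamma_k$, a fact I would exploit at the end.

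Choosing the paths $\gamma_1,\ldots,\gamma_n$ pairwise disjoint, the edges selected from distinct paths are automatically distinct, so the centered moment of the occupation variables is given by the determinantal identity
\[
\E\Big[\,\prod_{k=1}^n\big(\mathbbm{1}[e_k\in\cD]-\P[e_k\in\cD]\big)\,\Big]\ =\ \det\big[\,\mathbbm{1}_{j\ne k}\,K(b_k,w_k)K^{-1}(w_j,b_k)\,\big]_{j,k=1}^n,
\]
with $e_k=(b_kw_k)$. This is the standard expression for centered moments of a determinantal process and is verified by the permutation expansion of~\eqref{eq:P-k-dimers}: the centering cancels exactly the contributions of all permutations possessing a fixed point, leaving the derangements, i.e.\ the determinant with vanishing diagonal. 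Substituting this back, the left-hand side of~\eqref{eq:dH-limit} becomes a multiple sum over $e_k\in\gamma_k$ of this determinant weighted by $\prod_k\sigma_{e_k}$.

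Next I would let $\delta\to0$ along the subsequence for which~\eqref{eq:fpmpm-def} holds. Because the paths stay at a definite distance from each other and from $\partial\Omega_\xi$, every off-diagonal entry $K(b_k,w_k)K^{-1}(w_j,b_k)$ couples separated faces and is therefore uniformly bounded by Theorem~\ref{thm:K-1=O(1)}; this dominates the Riemann sums and permits interchanging the limit with the summation over the paths, the pointwise limits of the entries being supplied by~\eqref{eq:fpmpm-def}. Inserting the four-term representation of $K^{-1}$ from Proposition~\ref{prop:Fpmpm-def}(ii) and using that $\sigma_{e_k}K(b_k,w_k)=\sigma_{e_k}\,d\cT(b_kw_k^*)$ is the signed complex increment along the $k$-th path, each discrete contour sum converges, after the change of variables $z\mapsto\zeta$ governed by the conformality relations~\eqref{eq:dzz=bbaadzzeta}, to an integral against the differential $d\zzeta{s_k}_k$. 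The phases $\eta_b^2,\eta_w^2$ occurring in Proposition~\ref{prop:Fpmpm-def}(ii), together with the coefficients~\eqref{eq:bb-def}--\eqref{eq:aa-def}, assemble the four functions $\ff{\sspm\sspm}$ into $\ppsi{\sspm\sspm}$ exactly as in~\eqref{eq:psipmpm-def}; the four sign choices per variable produce the prefactor $4^{-n}\sum_{s_1,\ldots,s_n\in\{\pm\}}$, and the derangement determinant turns into $\det[\mathbbm{1}_{j\ne k}\ppsi{s_j,s_k}(\zeta_j,\zeta_k)]$. Thus the whole expression converges to $\int\cdots\int\mathcal{A}_n$, which is precisely Kenyon's classical computation carried out in the conformal coordinate $\zeta$ rather than in $\Omega_\xi$.

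Finally, exactness of $\mathcal{A}_n$ in each variable, equivalently independence of the integral on the choice of non-intersecting paths, follows from the discrete single-valuedness noted above: the left-hand side of~\eqref{eq:dH-limit} does not depend on $\gamma_1,\ldots,\gamma_n$ for any fixed $\delta$, hence neither does its limit, and running the convergence just established along discrete approximations of any two admissible continuum path systems identifies the corresponding integrals. Consistency with the singularity structure is guaranteed by Proposition~\ref{prop:fpmpm}: the simple pole of $\ppsi{\ssp\ssp}$ with purely imaginary residue $\tfrac{2}{\pi i}$ and the regularity of $\ppsi{\ssm\ssp}$ on the diagonal make the periods of $\mathcal{A}_n$ around $\zeta_j=\zeta_k$ vanish. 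The hardest part, I expect, is not probabilistic but the bookkeeping in the third step: tracking the phases and complex increments through the discrete-to-continuum passage so that $\bb{\sspm}{\sspm}$ and $\aa{\sspm}{\sspm}$ emerge in exactly the combination~\eqref{eq:psipmpm-def}, and verifying that the uniform bounds of Theorem~\ref{thm:K-1=O(1)} yield Riemann-sum convergence that remains uniform as the paths are allowed to approach one another or $\partial\Omega_\xi$. This change of variables is the only place where the non-trivial origami limit enters, replacing the flat Green function of~\cite{CLR1} by its analogue in the intrinsic metric of $\rS_\xi$.
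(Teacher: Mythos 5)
Your proposal is correct and follows essentially the same route as the paper: rewrite the alternating sum as $\E\big[\prod_k(\hbar^\delta(v^\delta_{k,2})-\hbar^\delta(v^\delta_{k,1}))\big]$, expand it via the fixed-point-free determinant coming from~\eqref{eq:P-k-dimers}, complexify the kernel via Proposition~\ref{prop:Fpmpm-def}(ii), pass to the limit using Theorem~\ref{thm:K-1=O(1)} and~\eqref{eq:fpmpm-def}, and change variables to $\zeta$ via~\eqref{eq:dzz=bbaadzzeta}. The one point the paper treats more carefully is that the discrete paths $\gamma_k^\delta$ may have uncontrolled total length (face angles of $\cT^\delta$ are not bounded below), so before the domination/Riemann-sum step one must extend the complexified summand to a closed piece-wise constant form in the plane and deform the paths to have uniformly bounded length; your derivation of path-independence from the discrete single-valuedness of the height function is a harmless variant of the paper's appeal to closedness of the limiting forms.
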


\newcommand\cTT[2]{\cT_{{\scriptscriptstyle [}#1{\scriptscriptstyle]}}^{{\scriptscriptstyle [}#2{\scriptscriptstyle ]}}}

\begin{proof} The main part of the proof mimics~\cite[Proposition 7.2]{CLR1}; we repeat it here for completeness of the exposition. Let $\gamma_k^\delta$ be a path running over edges of the t-embedding~$\cT^\delta$ near~$\gamma_k$, from~$v_{k,1}$ to~$v_{k,2}$. Note that, in general, we do \emph{not} control the total length of these paths as we do not assume that the angles of t-embeddings are uniformly bounded from below. Let~$(b^\delta_k w^\delta_k)^*\in\gamma^\delta_k$ be some edges on these paths. Identity~\eqref{eq:P-k-dimers} implies that
\begin{align*}
{\sum_{r_1,\ldots,r_n\in\{1,2\}}}&{(-1)^{r_1+\ldots+r_n} H^\delta_n(v^\delta_{1,r_1},\ldots,v^\delta_{n,r_n})} \\[-4pt]
& =\ \int_{v^\delta_{1,1}}^{v^\delta_{1,2}}\!\!\ldots\int_{v^\delta_{n,1}}^{v^\delta_{n,2}} \det\big[\mathbbm{1}_{j\ne k}K^{-1}_{ \cT^\delta}(w_j^\delta,b_k^\delta)\big]_{j,k=1}^n\cdot {\prod_{k=1}^n} d\cT^\delta_k,
\end{align*}
where the integrals are computed along the paths~$\gamma_1^\delta,\ldots,\gamma_n^\delta$ and the increment~$d\cT^\delta_k=\pm d\cT^{\delta}((b^\delta_k w^\delta_k)^*)$ is always oriented from~$v^\delta_{k,1}$ to~$v^\delta_{k,2}$ (the~$\pm$ sign depends on whether~$b^\delta_k$ is to the right or to the left from~$\gamma_k$). The diagonal $j=k$ is excluded since we are interested in the correlations of the fluctuations~$\hbar^\delta(v_k^\delta)=h^\delta(v_k^\delta)-\mathbb E[h^\delta(v_k^\delta)]$ and not in the functions~$h^\delta(v_k^\delta)$ themselves.

Expanding the determinant one obtains {the following expression:}
\begin{align*}
\sum_{\sigma\in S_n: \sigma(k)\ne k}(-1)^{\operatorname{sign}(\sigma)} \int_{v^\delta_{1,1}}^{v^\delta_{1,2}}\!\!\ldots\int_{v^\delta_{n,1}}^{v^\delta_{n,2}} \ \prod_{k=1}^nK^{-1}_{\cT^\delta}(w_k^\delta,b_{\sigma(k)}^\delta)d\cT^\delta_k,
\end{align*}
where the sum is taken over all permutations~$\sigma$ with no fixed points.
Denote
\[
d\cTT\ssp\ssp:=d\cT, \quad d\cTT\ssp\ssm:=d\overline{\cT}, \quad  d\cTT\ssm\ssp:=d\cO\ \ \text{and}\ \ d\cTT\ssm\ssm:=d\overline{\cO}.
\]
As in the proof of~\cite[Proposition 7.2]{CLR1}, we now pass from real-valued dimer coupling functions~$K^{-1}_{\cT^\delta}$ to the complexified ones~$\FF{\pm\pm}_{\cT^\delta}$; see Proposition~\ref{prop:Fpmpm-def} and~\eqref{eq:dO-on-T}. This gives the identity
\[
\prod_{k=1}^n K^{-1}_{\cT^\delta}(w^\delta_k,b^\delta_{\sigma(k)})d\cT^\delta_k\ =\ 4^{-n}\!\!\!\sum_{p_k,q_k\in\{\pm\}}\ \prod_{k=1}^n \FF{p_k,q_{\sigma(k)}}(b^\delta_k,w^\delta_{\sigma(k)})d\cT^{[q_k],\delta}_{[p_kq_k],k}\,,
\]
where the sum is taken over all~$2^{2n}$ possible combinations of signs~$p_k$ and~$q_k$. This closed differential form can be extended from edges of the t-embedding into the plane, in all the variables~$z_k$ simultaneously. In particular, the paths~$\gamma_k^\delta$ can now be assumed to coincide with~$\gamma_k$ except near the endpoints and, in particular, to have uniformly (in~$\delta$) bounded lengths.

The convergence~\eqref{eq:fpmpm-def} allows us to conclude that
\begin{align*}
 {\sum_{r_1,\ldots,r_n\in\{1,2\}}}\,&{(-1)^{r_1+\ldots+r_n} H^\delta_n(v^\delta_{1,r_1},\ldots,v^\delta_{n,r_n})} \\[-4pt]
 &\to\ \ 4^{-n}\!\!\!\!\!\!\sum_{\sigma\in S_n: \sigma(k)\ne k} (-1)^{\operatorname{sign}(\sigma)}\int_{v_{1,1}}^{v_{1,2}}\!\!\ldots\int_{v_{n,1}}^{v_{n,2}}\mathcal{A}_n^{(\sigma)}(z_1,\ldots,z_n)
\end{align*}
as~$\delta\to 0$, where
\[
\mathcal{A}_n^{(\sigma)}(z_1,\ldots,z_n)\ := \sum_{p_k,q_k\in\{\pm\}}\ \prod_{k=1}^n\ff{p_k,q_{\sigma(k)}}(z_k,z_{\sigma(k)})dz^{[q_k]}_{[p_kq_k],k}
\]
and we use the notation~$d\zz\ssp\ssp:=dz$, $d\zz\ssp\ssm:=d\overline{z}$, $d\zz\ssm\ssp:=d\olim$, \mbox{$d\zz\ssm\ssm:=d\overline{\olim}$}.
To complete the proof, it remains to write this differential form using the conformal coordinates $\zeta_k\in\mathbb{D}$. Recall (see~\eqref{eq:dzz=bbaadzzeta}) that
\[
 d\zz{pq}{q}\ =\ \bb{p}{\ssp}\aa{q}{\ssp}d\zeta\ +\ \bb{p}{\ssm}\aa{q}{\ssm}d\overline{\zeta}\ =\ \sum_{s\in\{\pm\}}\bb{p}{s}\aa{q}{s}d\zzeta{s}\,.
\]
Therefore, for each~$\sigma\in S_n$ with no fixed points we have
\begin{align*}
\mathcal{A}_n^{(\sigma)}\ &=\sum_{p_k,q_k,s_k\in\{\pm\}}\ \prod_{k=1}^n \bb{p_k}{s_k}(\zeta_k)\aa{q_k}{s_k}(\zeta_k)\ff{p_k,q_{\sigma(k)}}(z_k,z_{\sigma(k)})d\zzeta{s_k}_k\\
&=\sum_{s_k\in\{\pm\}}\ \prod_{k=1}^n\sum_{p_k,q_k\in\{\pm\}} \bb{p_k}{s_k}(\zeta_k)\aa{q_{\sigma(k)}}{s_{\sigma(k)}}(\zeta_{\sigma(k)})\ff{p_k,q_{\sigma(k)}}(z_k,z_{\sigma(k)})d\zzeta{s_k}_k\\
&=\sum_{s_k\in\{\pm\}}\ \prod_{k=1}^n \ppsi{s_k,s_{\sigma(k)}}(\zeta_k,\zeta_{\sigma(k)})d\zzeta{s_k}_k\,,
\end{align*}
which completes the proof since~$4^{-n}\sum_{\sigma\in S_n:\sigma(k)\ne k}(-1)^{\operatorname{sign}(\sigma)}\mathcal{A}_n^{(\sigma)}$ is nothing but the definition~\eqref{eq:An-def} of the differential form~$\mathcal{A}_n$.
\end{proof}
We are now ready to prove the main result of our paper.
\begin{proof}[Proof of Theorem~\ref{thm:main-GFF}] Due to Proposition~\ref{prop:dH-limit} we only need to show that
\begin{equation}
\label{eq:An=dGn}
\mathcal{A}_n(\zeta_1,\ldots,\zeta_n)\ =\ \pi^{-n/2}d_{\zeta_1}\ldots d_{\zeta_n}G_{\mathbb{D},n}(\zeta_1,\ldots,\zeta_n)
\end{equation}
for all subsequential limits~\eqref{eq:fpmpm-def} of the functions~$\FF{\pm\pm}_{\cT^\delta}$. The key observation that allows us to do so is that the coefficients of the (piece-wise constant) differential forms
\begin{equation}\label{eq:x-An-discrete}
4^{-n}\!\!\!\sum_{p_k,q_k\in\{\pm\}}\ \prod_{k=1}^n \FF{p_k,q_{\sigma(k)}}(b_k,w_{\sigma(k)})d\cT^{[q_k],\delta}_{[p_kq_k],k}
\end{equation}
are uniformly bounded not only in the situation when all edges~$(b^\delta_kw^\delta_k)$ stay away from each other and from $\partial\Omega_\xi$ as~$\delta\to 0$ but also when \emph{one} of them (say, that with~$k=n$) is allowed to approach~$\partial\cT^\delta$; see Theorem~\ref{thm:K-1=O(1)}.

Let~$v_{n,0}^\delta$ be the closest point to~$v_{n,1}^\delta$ at the polygonal boundary of the perfect t-embedding~$\partial\cT^\delta$. The standard computation given in the proof of Proposition~\ref{prop:dH-limit} with~$v^\delta_{n,2}$ replaced by~$v^\delta_{n,0}$ implies that
\begin{align}
{\sum_{r_1,\ldots,r_{n-1}\in\{1,2\}}}\!\!{(-1)^{r_1+\ldots+r_{n-1}} H^\delta_n(v^\delta_{1,r_1},\ldots,v^\delta_{n-1,r_{n-1}},v^\delta_{n,1})}& \notag \\[-4pt]
=\ O(\mathrm{dist}(v^\delta_{n,1},v^\delta_{n,0}))&
\label{eq:x-estim}
\end{align}
uniformly \emph{both} in~$\delta$ and in~$v^\delta_{n,1}$ provided that all other points stay away from~$\partial\Omega_\xi$ and from each other. (Note that there is no need to assume that~$v_{n,0}^\delta$ is a boundary \emph{vertex} of~$\cT^\delta$ since the primitive of the piece-wise constant differential form~\eqref{eq:x-An-discrete} vanishes everywhere at the boundary of~$\cT^\delta$.) Combining this uniform estimate and Proposition~\ref{prop:dH-limit} we see that in fact the following (stronger than~\eqref{eq:dH-limit}) convergence result holds:
\begin{align}
{\sum_{r_1,\ldots,r_{n-1}\in\{1,2\}}}\!\!&{(-1)^{r_1+\ldots+r_{n-1}} H^\delta_n(v^\delta_{1,r_1},\ldots,v^\delta_{n-1,r_{n-1}},v^\delta_{n,2})} \notag \\[-4pt]
&\mathop{\to}\limits_{\delta\to 0}\quad \lim_{v_{n,1}\to\partial\Omega_\xi} \int_{\zeta(v_{1,1})}^{\zeta(v_{1,2})}\ldots\int_{\zeta(v_{n,1})}^{\zeta(v_{n,2})}\mathcal{A}_n(\zeta_1,\ldots,\zeta_n)\,;
\label{eq:x-conv}
\end{align}
in particular, this limit as~$v_{n,1}\to\partial\Omega$ exists and does not depend on the way in which~$v_{n,1}$ approaches the boundary of~$\Omega_\xi$ or, equivalently, on the way in which~$\zeta(v_{n,1})$ approaches the boundary of the unit disc~$\mathbb{D}$.

Let us now assume that~$n=2$, fix two distinct points~$\zeta_{1,1},\zeta_{1,2}\in\mathbb{D}$ and consider the function
\[
h_2(\zeta_{1,1},\zeta_{1,2}\,;\,\zeta)\ :=\ \lim_{\widetilde{\zeta}\to\partial\mathbb{D}}
\int_{\zeta_{1,1}}^{\zeta_{1,2}}\int_{\widetilde{\zeta}}^{\zeta}\mathcal{A}_2(\zeta_1,\zeta_2).
\]
This function is harmonic for~$\zeta\in\mathbb{D}\smallsetminus \{\zeta_{1,1},\zeta_{1,2}\}$ and has Dirichlet boundary values as~$\zeta\to \partial\mathbb{D}$ due to the same combination of the uniform estimate~\eqref{eq:x-estim} and the convergence~\eqref{eq:x-conv}. Moreover, it follows from Proposition~\ref{prop:fpmpm} that
\[
\mathcal{A}_2(\zeta_1,\zeta_2)\ =\ -\frac{1}{2\pi^2}\Re\biggl(\frac{d\zeta_1 d\zeta_2}{(\zeta_2-\zeta_1)^2}\biggr)+O\biggl(\frac{|d\zeta_1||d\zeta_2|}{|\zeta_2-\zeta_1|}\biggr)\ \ \text{as}\ \ \zeta_2\to\zeta_1\,.
\]
This asymptotics allows us to identify the behavior of $h_2(\zeta_{1,1},\zeta_{1,2};\,\cdot\,)$ at singularities and to prove that
\[
h_2(\zeta_{1,1},\zeta_{1,2};\,\cdot\,)\ =\ G_\mathbb{D}(\zeta_{1,2},\,\cdot\,)-G_\mathbb{D}(\zeta_{1,1},\,\cdot\,),
\]
where~$G_\mathbb{D}$ is the Green function in~$\mathbb{D}$ with Dirichlet boundary conditions. Therefore,
\begin{equation}
\label{eq:A2=dG2}
\mathcal{A}_2(\zeta_1,\zeta_2)\ =\ \pi^{-1}d_{\zeta_1}d_{\zeta_2}G_{\mathbb{D}}(\zeta_1,\zeta_2)\,.
\end{equation}

A similar argument for~$n=3$ yields the identity
\begin{equation}
\label{eq:A3=dG3}
\mathcal{A}_3(\zeta_1,\zeta_2,\zeta_3)\ =\ 0
\end{equation}
since in this case Proposition~\ref{prop:fpmpm} implies that the differential form~$\mathcal{A}_3$ has at most linear singularities. (This means that its primitive~\eqref{eq:x-conv}, viewed as a harmonic function of the last variable, have \emph{no} singularities and thus vanishes due to the Dirichlet boundary conditions.) It remains to note that~\eqref{eq:A2=dG2} and~\eqref{eq:A3=dG3}, together with the definition~\eqref{eq:An-def} of the differential forms~$\mathcal{A}_n$, imply that the identity~\eqref{eq:An=dGn} also holds for all~$n\ge 4$; see~\cite[Lemma~7.3]{CLR1}. 
\end{proof}

\section{Discussion and open questions}\label{sec:discussion}

\subsection{Finding a perfect Coulomb gauge for a given weighted graph} \label{sub:finding-gauge}
Let~$\G$ be a given (big) weighted bipartite graph with a marked vertex~$v_\mathrm{out}$ of its dual graph, which is then augmented at~$v_\mathrm{out}$ and denoted by~$\G^*$ as in Section~\ref{sub:origami}. In what follows we denote the positive weights of its edges by~$\chi_e$ and by~$K_\R:\R^W\to\R^B$ a \emph{real-valued} Kasteleyn matrix on this graph, i.e., a matrix with entries~$K(b,w)=\pm\chi_{(bw)}$ such that the 
product of all {the `$\pm$'} signs around each vertex~$v$ of the dual graph~$\G^*$ equals~$(-1)^{{\frac12\mathrm{deg}v}-1}$. Following~\cite{KLRR}, we call~$\cF^\tb\!:\!B\!\to\!\C$ and~$\cF^\tw\!:\!W\!\to\!\C$ \emph{Coulomb gauge} functions if
\begin{equation}
\label{eq:Coulomb-def}
\begin{array}{ll}
[K^\top \cF^\tb](w)=0 &\text{for all~$w\in W\smallsetminus\partial W$,}\\[2pt]
[K\cF^\tw](b)=0 &\text{for all~$b\in B\smallsetminus\partial B$.}
\end{array}
\end{equation}
In this case, one defines a \emph{t-realisation}~$\cT=\cT_{(\cF^\tb,\,\cF^\tw)}$ of the graph~$\G^*$ in the complex plane and the associated origami map~$\cO=\cO_{(\cF^\tb,\,\cF^\tw)}$ by setting
\begin{equation}
\label{eq:TO-def-via-F}
\begin{array}{rcl}
d\cT(bw^*)&\!:=\!& \cF^\tb(b)K_\R(b,w)\cF^\tw(w),\\[2pt]
d\cO(bw^*)&\!:=\!&\cF^\tb(b)K_\R(b,w)\overline{\cF^\tw(w)}\,;
\end{array}
\end{equation}
note that~\eqref{eq:Coulomb-def} implies that these $1$-forms are closed around each face of~${\G^*}$ and hence define~$\cT$ and~$\cO$ up to global additive constants. The word `realisation' used instead of `embedding' emphasizes the fact that we do not insist on the fact that~$\cT$ is a \emph{proper} embedding of~$\G^*$. 

Further, we call a Coulomb gauge~$(\cF^\tb,\cF^\tw)$ \emph{non-degenerate} if~$\cF^\tb(b)\ne 0$ for all~$b\in B$, $[K^\top\cF^\tb](w)\ne 0$ for all~$w\in\partial W$, $F^\tw(w)\ne 0$ for all~$w\in W$, and~$[K\cF^\tw](b)\ne 0$ for all~$b\in \partial B$. This means that no edge of~$\G^*$, including those of the outer face, degenerates under~$\cT$.

{Recall that~$\rH$ defines the one-sheet hyperboloid~\eqref{eq:hyperboloid-def}, which is nothing but the unit sphere in the Minkowski space~$\R^{2,1}$ viewed as a subspace of~$\R^{2,2}$.} As in Section~\ref{sub:origami} we use the notation~$v_1,\ldots,v_{2n}$ for the outer vertices of~$\G^*$ labeled counterclockwise. {We also denote} by~$v_{{\mathrm{in},k}}$ the (unique) inner vertex of~$\G^*$ that is adjacent to~$v_k$.

\begin{theorem}\label{thm:perfect-gauge}
A non-degenerate Coulomb gauge~$(\cF^\tb,\cF^\tw)$, {considered up to simultaneous multiplications~$(\cF^\tb,\cF^\tw)\mapsto(\lambda\cF^\tb,\lambda^{-1}\cF^\tw)$, $\lambda\in\C$, defines a perfect t-embedding if and only if one can choose~$\lambda$ and the additive constants of integration in the definition~\eqref{eq:TO-def-via-F} so that} the following properties hold:
\begin{enumerate}\renewcommand\labelenumi{(\roman{enumi})}
\item the point $(\cT(v_k),\cO(v_k))$ lies on the hyperboloid~\eqref{eq:hyperboloid-def} for all~$k$\,;

\vskip 2pt

\item ${\pm}\Im\!\big[(\cT(v_{k{\pm} 1})-\cT(v_k))\cdot {\overline{d\cT((v_{\mathrm{in},k}v_k))}}\,\big]\!>\!0$ {for all~$k$ and both signs~$\pm$\,;}

\vskip 2pt

\item the index of the polyline~$\cT(v_1)\cT(v_2)\ldots\cT(v_{2n})\cT(v_1)$ with respect to the origin is~$1$ (i.e., the boundary~$\partial\cT$ winds around~$0$ exactly once).
\end{enumerate}
\end{theorem}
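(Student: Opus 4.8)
The plan is to prove the two implications separately, using throughout that the Coulomb gauge equations~\eqref{eq:Coulomb-def} make the forms~$d\cT$ and~$d\cO$ closed around every face (so all faces of~$\cT$ are closed polygons), that the real Kasteleyn sign condition on~$K_\R$ guarantees the angle condition at every inner vertex of~$\G^*$, and that by its definition~\eqref{eq:TO-def-via-F} the origami map acts as a Euclidean isometry on each individual face, so that $|d\cO|=|d\cT|$ along every edge. \emph{Forward direction:} suppose $(\cF^\tb,\cF^\tw)$ yields a perfect t-embedding. Using the phase of~$\lambda$ (which rotates~$\cO$ and leaves~$\cT$ unchanged) and the additive constants, I normalize so that~$\cO$ maps the outer face onto~$\R$ and the inscribed unit circle is centered at the origin. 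Then Remark~\ref{rem:xi-k} and~\eqref{eq:eta=perfect} give $\cT(v_k)=e^{i\phi_k}/\cos\xi_k$ and $\cO(v_k)=\tan\xi_k$, whence $|\cT(v_k)|^2-|\cO(v_k)|^2=1$ and $\Im\cO(v_k)=0$, which is (i); properness together with the counterclockwise labelling gives winding number~$1$ about the origin, which is (iii); and the fact that the single interior edge at each~$\cT(v_k)$ points into the embedded region and lies strictly between the two incident boundary edges (a consequence of the bisector property) is exactly~(ii).

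For the converse, assume $\lambda$ and the constants are chosen so that (i)--(iii) hold, and first analyze the boundary. From (i) every vertex lies outside the unit disc, with $\cT(v_k)=e^{i\phi_k}/\cos\xi_k$ and $\cO(v_k)=\tan\xi_k$ for a well-defined $\xi_k\in(-\tfrac\pi2,\tfrac\pi2)$. Since $\cO(v_k),\cO(v_{k+1})\in\R$ and~$\cO$ is affine along the edge $(v_kv_{k+1})$, its increment is real, so~$\cO$ maps each boundary edge into~$\R$; hence the lift $(\cT,\cO)$ of a boundary edge is a segment of~$\R^{2,1}$ whose endpoints lie on~$\rH$ and which, because $|d\cO|=|d\cT|$ on it, is light-like. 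A light-like chord of the one-sheet hyperboloid with both endpoints on~$\rH$ must lie on~$\rH$, i.e.\ it is one of its rulings; projecting to the $z$-plane, its image is tangent to the unit circle. Thus~$\partial\cT$ is a tangential polygon to the unit circle, and (iii) forces it to wind around the circle exactly once, so it is a star-shaped --- hence simple --- Jordan curve bounding~$\Omega_\xi$ (recall the star-convexity in~\eqref{eq:omega-xi-def}).

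It remains to upgrade the t-realisation to a proper embedding of the topological disc $\G^*\setminus v_{\mathrm{out}}$ onto~$\overline{\Omega}_\xi$. The non-degeneracy of the Coulomb gauge excludes collapsed edges and faces, and the angle condition at every inner vertex forces the incident faces to tile a full $2\pi$ neighborhood with a single consistent orientation, so that~$\cT$ is an orientation-preserving local homeomorphism in the interior; condition (ii) propagates this up to the boundary. Since $\cT|_\partial$ is a homeomorphism onto a positively oriented Jordan curve by the previous step, a degree / argument-principle count --- the number of preimages of an interior point equals the winding number, which is~$1$ --- shows that~$\cT$ is a homeomorphism onto~$\overline{\Omega}_\xi$. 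Convexity of the faces and the bisector property at the boundary then follow from the angle condition combined with the tangency established above, completing the verification that~$\cT$ is a perfect t-embedding.

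The delicate step is this interior one: showing that the angle condition, rather than merely rendering~$\cO$ single-valued, actually forces~$\cT$ to be a genuine orientation-preserving local homeomorphism (total angle exactly~$2\pi$, with no branch points or local folds) uniformly at all inner vertices, and that for a possibly \emph{non-convex} tangential outer polygon the winding-number count still yields global injectivity. The hyperboloid condition~(i) is precisely what supplies the global consistency --- through~$\cO$ being real and single-valued on the boundary --- that makes this degree argument valid.
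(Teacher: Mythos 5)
Your forward direction and your boundary analysis (light-like chords of the hyperboloid must lie on its rulings, hence the outer polygon is tangential to the unit circle) match the paper's Lemma~\ref{lem:boundary-on-H}, although you skip the point where condition~(ii) is actually used there: one must show that the two boundary edges meeting at each~$\cT(v_k)$ lie on rulings from \emph{different} pencils of lines on~$\rH$ (otherwise the edge $(v_{\mathrm{in},k}v_k)$ would be aligned with a boundary edge), and this is what produces the bisector property and the identities~\eqref{eq:phi-phi=xi-xi}.

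The genuine gap is in the interior step, which you yourself flag as delicate but then only assert. The claim that ``the angle condition at every inner vertex forces the incident faces to tile a full $2\pi$ neighborhood with a single consistent orientation, so that~$\cT$ is an orientation-preserving local homeomorphism'' is not justified and is essentially the whole content of the theorem: for a mere t-\emph{realisation} the faces can be non-convex, folded, or overlapping, and the Kasteleyn sign condition does not by itself give a geometric angle condition until properness is already known. The paper closes this gap with two specific ingredients you do not have: (1) a maximum principle (Lemma~\ref{lem:T-no-strict-max}) for the linear projections $\Re[\overline{\beta}(\cT+\alpha^2\cO)]$, which follows from the Kasteleyn signs via the inequality $\prod_k dT(e_k)\le 0$ around each inner vertex; and (2) the cyclic ordering of the boundary increments $d(\cT+\alpha^2\cO)((v_{\mathrm{in},k}v_k))$ (Corollary~\ref{cor:dTO-cyclic}), which is where the hyperboloid structure from Lemma~\ref{lem:boundary-on-H} enters quantitatively. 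Combining these, a hypothetical non-convex face would yield four cyclically ordered vertices with alternating large/small projections, hence four disjoint monotone paths to the boundary, contradicting the cyclic ordering; a similar T-graph argument forces all faces to share one orientation, and only then does your degree/argument-principle count become valid. Without an argument of this type (in the style of Kenyon--Sheffield), the passage from ``closed forms with correct boundary'' to ``proper embedding'' is unproven.
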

Clearly, all the properties listed above hold for perfect t-embeddings, so the main content of Theorem~\ref{thm:perfect-gauge} is the converse statement: if we know that~$(\cT(v_k),\cO(v_k))\in\rH$ for all~$k=1,\ldots,2n$, then~$\cT$ is automatically a \emph{proper} embedding provided that the mild assumptions {(ii)} and (iii) holds.

\begin{remark}\label{rem:Lorentz-isometries}
The hyperboloid~$\rH$ is invariant under rotations~$z\mapsto\alpha^2 z$, $\alpha\in\mathbb{T}$, which correspond to multiplications~$\cF^\tb\mapsto \alpha\cF^\tb$, $\cF^\tw\mapsto\alpha\cF^\tw$ and is also invariant under Lorentz boosts
\[
x\mapsto x\cosh(\varsigma) -\vartheta\sinh(\varsigma)\,,\quad y\mapsto y\,,\quad \vartheta\mapsto \vartheta\cosh(\varsigma)-x\sinh(\varsigma),
\]
where~$z=x+iy$ and~$\varsigma\in\R$. The latter correspond to a simultaneous change
\[
\cF^\tb\mapsto \cF^\tb\cosh(\tfrac12\varsigma) -\overline{\cF}{}^\tb\sinh(\tfrac12\varsigma)\,,\quad \cF^\tw\mapsto \cF^\tw\cosh(\tfrac12\varsigma) -\overline{\cF}{}^\tw\sinh(\tfrac12\varsigma)
\]
of the gauge functions.
\end{remark}
The proof of Theorem~\ref{thm:perfect-gauge} follows the lines of~\cite[Theorem~4.6]{kenyon-sheffield} and~\cite[Lemma~14]{KLRR}, with a certain adaptation to the context of this paper. We begin with two preliminary lemmas.
\begin{lemma}\label{lem:boundary-on-H} In the setup of Theorem~\ref{thm:perfect-gauge} there exists angles
\[
\phi_1<\ldots<\phi_{{2n}}<\phi_{{2n+1}}=\phi_1+2\pi\ \ \text{and}\ \ \xi_k\in (-\tfrac\pi 2,\tfrac\pi 2),
\]
$k=1,\ldots,2n$, satisfying the identities~\eqref{eq:phi-phi=xi-xi} with~$\xi_{{2n+1}}:=\xi_1$ such that
\begin{equation}
\label{eq:TO-via-phixi}
\begin{array}{ll}
\cT(v_k)=e^{i\phi_k}/\cos\xi_k\,, & {\pm}\cO(v_k)=\tan\xi_k\,,\\[4pt]
d\cT((v_{{\mathrm{in},k}}v_k))\in e^{i\phi_k}\R_+\,,\ \ &  {\pm}d\cO((v_{{\mathrm{in},k}}v_k))\in -ie^{i\xi_k}\R_+
\end{array}
\end{equation}
{for all outer vertices~$v_k$ of~$\G^*$,}{where the `$\pm$' sign does not depend on~$k$ and can be fixed to be~`$+$' by applying the multiplication $(\cF^\tb,\cF^\tw)\mapsto(i\cF^\tb,-i\cF^\tw)$.}
\end{lemma}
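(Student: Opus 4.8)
The plan is to extract the angles $\phi_k,\xi_k$ from hypothesis (i) and then to recover the tangential-polygon-with-bisectors picture from the null-segment structure of the realisation together with the orientation hypotheses (ii)--(iii). Before using any hypothesis I would record what the gauge already gives: by \eqref{eq:TO-def-via-F} the unit complex numbers $\eta_b:=\overline{\cF^\tb(b)}/|\cF^\tb(b)|$ and $\eta_w:=\overline{\cF^\tw(w)}/|\cF^\tw(w)|$ form an origami square root function, with $d\cO/d\cT=\eta_w^2$ on white faces and $d\cO/\,\overline{d\cT}=\overline{\eta_b^2}$ on black faces exactly as in \eqref{eq:def-O}; and since $|\eta_w^2|=1$ every edge satisfies $|d\cO(bw^*)|=|d\cT(bw^*)|$, i.e. $(\cT,\cO)$ maps each edge to a \emph{null} segment of $\R^{2,2}$.

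Applying (i), and using that $\rH\subset\R^{2,1}$ forces $\cO(v_k)\in\R$, I would set $\xi_k\in(-\tfrac\pi2,\tfrac\pi2)$ by $\pm\cO(v_k)=\tan\xi_k$ (the global sign being normalised to $+$ by $(\cF^\tb,\cF^\tw)\mapsto(i\cF^\tb,-i\cF^\tw)$, which sends $\cO\mapsto-\cO$ and fixes $\cT$) and $\phi_k:=\arg\cT(v_k)$; the hyperboloid relation $|\cT(v_k)|^2=1+\cO(v_k)^2$ then gives $\cT(v_k)=e^{i\phi_k}/\cos\xi_k$, the first two identities of \eqref{eq:TO-via-phixi}. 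The geometric core is a ruling argument: each boundary edge lifts to a null chord of $\rH$ with both endpoints on $\rH$, and for $P\in\rH$ and a null vector $v$ one has $\langle P+tv,P+tv\rangle=1+2t\langle P,v\rangle$, so the existence of a second point $P+t_0v\in\rH$ ($t_0\neq0$) forces $\langle P,v\rangle=0$ and hence the whole line to lie on $\rH$, i.e. to be a ruling. In the coordinates $(\cos\phi/\cos\xi,\sin\phi/\cos\xi,\tan\xi)$ the two ruling families are exactly $\{\phi-\xi=\mathrm{const}\}$ and $\{\phi+\xi=\mathrm{const}\}$; the holomorphic ($d\cO=\eta_w^2d\cT$) versus antiholomorphic ($d\cO=\overline{\eta_b^2}\,\overline{d\cT}$) form of the origami map matches the alternating white and black boundary edges to the two families, which gives precisely the identities \eqref{eq:phi-phi=xi-xi} (adjacent edges must differ in family, else three consecutive vertices would be collinear on one ruling, violating non-degeneracy).

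For the spokes I would exploit that on the ruling edges the real factor cancels in the ratio $d\cO/d\cT$: from $\cT(v_k)=e^{i\phi_k}/\cos\xi_k$ and \eqref{eq:phi-phi=xi-xi} one gets $\eta_{w_k}^2=d\cO/d\cT=-ie^{-i(\phi_{2k}-\xi_{2k})}$ on the white edge $(v_{2k-1}v_{2k})$ and $\overline{\eta_{b_k}^2}=d\cO/\,\overline{d\cT}=-ie^{i(\phi_{2k}+\xi_{2k})}$ on the black edge $(v_{2k}v_{2k+1})$, \emph{without} sign ambiguity. Hence $\eta_{w_k}^2\eta_{b_k}^2=e^{-2i\phi_{2k}}$, and since the spoke separates $w_k$ and $b_k$ we have $d\cT((v_{\mathrm{in},2k}v_{2k}))\in\overline{\eta}_{b_k}\overline{\eta}_{w_k}\R$ with $(\overline{\eta}_{b_k}\overline{\eta}_{w_k})^2=e^{2i\phi_{2k}}$; this forces the spoke to be radial, $d\cT((v_{\mathrm{in},2k}v_{2k}))\in e^{i\phi_{2k}}\R$, and hypothesis (ii) selects the sign $\R_+$. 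Multiplying by $\eta_{w_k}^2$ then yields the last relation $d\cO((v_{\mathrm{in},2k}v_{2k}))\in -ie^{i\xi_{2k}}\R_+$, the odd vertices $v_{2k+1}$ being treated symmetrically.

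It remains to globalise the angles, which is where (ii) and (iii) are used in full: (ii) fixes the sign of the $\xi$-increment along each boundary edge so that, via \eqref{eq:phi-phi=xi-xi}, $\phi_{k+1}-\phi_k\in(0,\pi)$ and $\phi_k$ is strictly increasing, while (iii) forces the total increment to be exactly $2\pi$, yielding $\phi_1<\dots<\phi_{2n}<\phi_{2n+1}=\phi_1+2\pi$. I expect the main difficulty to be purely this orientation/sign bookkeeping — correctly assigning each edge to its ruling family by colour, singling out $\R_+$ rather than $\R_-$ in the spoke directions, and upgrading the arguments $\phi_k$ (a priori defined only modulo $2\pi$) to a genuinely monotone lift — since the Minkowski geometry of $\rH$ determines the configuration only up to exactly these discrete choices, which (ii) and (iii) are designed to resolve.
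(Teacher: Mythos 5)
Your overall route is the same as the paper's: null boundary chords of the hyperboloid must lie on its rulings, the two ruling families give the relations~\eqref{eq:phi-phi=xi-xi}, the gauge then forces the spokes to be radial, and (ii)--(iii) supply the monotonicity of the~$\phi_k$ and the total turning~$2\pi$. The ruling computation $\langle P+tv,P+tv\rangle=1+2t\langle P,v\rangle$ and the spoke computation via $(\overline{\eta}_{b_k}\overline{\eta}_{w_k})^2=e^{2i\phi_{2k}}$ are both correct and agree with the paper.

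There is, however, a genuine gap at the step that assigns consecutive boundary edges to \emph{different} ruling families --- which is exactly where \eqref{eq:phi-phi=xi-xi} comes from --- and neither of your two justifications works. First, the holomorphic/antiholomorphic dichotomy carries no information on boundary edges: there $d\cO$ is real, so $d\cO=\eta_w^2\,d\cT$ and $d\cO=\overline{\eta_w^2\,d\cT}$ are the same statement, and an edge of either colour can a priori lie in either family (both families have $d\cO/d\cT$ unimodular). Second, if both edges at $v_{2k}$ lay on the same ruling, the points $\cT(v_{2k-1}),\cT(v_{2k}),\cT(v_{2k+1})$ would indeed be collinear, but this does \emph{not} violate non-degeneracy as defined in the paper (non-degeneracy only forbids edges of zero length), and collinearity by itself is even compatible with condition (ii) when $\cT(v_{2k-1})$ and $\cT(v_{2k+1})$ lie on opposite sides of $\cT(v_{2k})$ along the common line. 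The paper's actual argument is stronger and uses the gauge: if both edges were in the same family then $(\cF^\tw(w_k))^2$ and $(\cF^\tb(b_k))^2$ would lie on the same ray $ie^{i(\phi-\xi)}\R_+$, whence the spoke $d\cT((v_{\mathrm{in},2k}v_{2k}))=-\cF^\tb(b_k)K_\R(b_k,w_k)\cF^\tw(w_k)$ would lie in $ie^{i(\phi-\xi)}\R$, i.e.\ be \emph{parallel to the boundary edges}, contradicting (ii) directly. You need this (or an equivalent use of (ii)) before you may write $\eta_{w_k}^2=-ie^{-i(\phi_{2k}-\xi_{2k})}$ and $\overline{\eta_{b_k}^2}=-ie^{i(\phi_{2k}+\xi_{2k})}$; everything downstream depends on it. A smaller, self-flagged issue: ``(ii) selects the sign $\R_+$'' for the spoke is not immediate once you have fixed $\phi_k:=\arg\cT(v_k)$, since (ii) only relates the spoke to the (as yet unoriented) boundary increments; the paper sidesteps this by using the residual $(\phi_k,\xi_k)\mapsto(\phi_k+\pi,\xi_k+\pi)$ ambiguity to \emph{define} $\phi_k$ so that the spoke lies in $e^{i\phi_k}\R_+$, and only then deriving $\cos\xi_k>0$ and $\phi_{k+1}-\phi_k\in(0,\pi)$ from (ii).
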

\begin{proof} By construction, for each~$k=1,\ldots,2n$ the points~$(\cT(v_k),\cO(v_k))$ and~$(\cT(v_{k+1}),\cO(v_{k+1}))$ are in a light-like position in the Minkowski space~$\R^{2,1}$. Since they both lie on the hyperboloid~\eqref{eq:hyperboloid-def}, the segment joining them should belong to one of the lines
\begin{equation}
\label{eq:lines-on-H}
\begin{array}{l}
\big(e^{i(\phi+t)}/\cos(\xi\!+\!t)\,,\,\tan(\xi\!+\!t)\big)_{t\in\R}\,,\\[2pt]
\big(e^{i(\phi+t)}/\cos(\xi\!-\!t)\,,\,\tan(\xi\!-\!t)\big)_{t\in\R}
\end{array}
\end{equation}
that form two pencils generating~$\rH$. Note that the directions of these lines (in~$\R^{2,1}\cong\C\times\R$) are $(ie^{i(\phi-\xi)},1)$ and $(ie^{i(\phi+\xi)},-1)$, respectively.

Recall that
\[
\cO(v_{2k})-\cO(v_{2k-1})\ =\ \frac{(\overline{\cF^\tw(w_k)})^2}{\lvert \cF^\tw(w_k) \rvert^2}\cdot (\cT(v_{2k})-\cT(v_{2k-1}))
\]
for all~$k=1,\ldots,n$ due to~\eqref{eq:TO-def-via-F} and, similarly, that
\begin{align*}
\cO(v_{2k+1})-\cO(v_{2k})\ &=\ \frac{(\cF^\tb({b_k}))^2}{\lvert \cF^\tb({b_k}) \rvert^2}\cdot (\overline{\cT(v_{2k+1})}-\overline{\cT(v_{2k})})\\[2pt]
&=\ \frac{(\overline{\cF^\tb({b_k})})^2}{\lvert \cF^\tb({b_k}) \rvert^2}\cdot (\cT(v_{2k+1})-\cT(v_{2k}))
\end{align*}
since all increments of~$\cO$ along the outer face of~$\G^*$ are purely real. It is now easy to see that the images of boundary edges~$(v_{2k}v_{2k\pm 1})$ under the mapping~$(\cT,\cO)$ should belong to \emph{different} pencils of lines~\eqref{eq:lines-on-H}. Indeed, if they both belonged to, say, a first line in~\eqref{eq:lines-on-H}, then we would have both
\[
(\cF^\tw(w_k))^2\ \in\ ie^{i(\phi-\xi)}\R_+\quad \text{and}\quad (\cF^\tb({b_k}))^2\ \in\ ie^{i(\phi-\xi)}\R_+,
\]
and hence~$\cT(v_{2k})-\cT(v_{{\mathrm{in},2k}})= -\cF^\tb({b_k})K_\R({b_k},w_k)\cF^\tw(w_k)\in ie^{i(\phi-\xi)}\R$. Thus, this increment would be aligned with~$\cT(v_{2k+1})-\cT(v_{2k})$, which contradicts to the condition {(ii)} in Theorem~\ref{thm:perfect-gauge}.
A similar argument applies to the images of boundary edges~$(v_{2k}v_{2k+1})$ and~$(v_{2k+1}v_{{2k+2}})$.

{Assume that the images of edges~$(v_{2k-1}v_{2k})$ belong to lines from the first pencil in~\eqref{eq:lines-on-H} while the images of edges~$(v_{2k}v_{2k+1})$ belong to the second one; the other case can be treated by replacing~$\cO$ with~$-\cO$.} {Let~$(ie^{i\beta^\tw_k},1)$ and~$(ie^{i\beta^\tb_k},-1)$ be the directions of these lines. Then, their intersection points can be written as
\[
\cT(v_k)\,=\,e^{i\phi_k}/\cos\xi_k\,,\quad \cO(v_k)\,=\,\tan\xi_k\,,\quad k=1,\ldots,2n,
\]
where~$\phi_{2k}-\xi_{2k}=\phi_{2k-1}-\xi_{2k-1}=\beta^\tw_k$ and~$\phi_{2k}+\xi_{2k}=\phi_{2k+1}+\xi_{2k+1}=\beta^\tb_k$. In particular, the angles~$\phi_k$ and~$\xi_k$ satisfy the identities~\eqref{eq:phi-phi=xi-xi}. However, as for now $\phi_k$ and~$\xi_k$ are defined (modulo~$2\pi$) only up to a simultaneous addition of~$\pi$ to both of them. Now let us note that}
\[
{(\cF^\tw(w_k))^2\ \in\ ie^{i\beta_k^\tw}\R_+\quad \text{and}\quad (\cF^\tb(b_k))^2\ \in\ -ie^{i\beta_k^\tb}\R_+,}
\]
which {yields}~$d\cT((v_{{\mathrm{in},k}}v_k))\in e^{i\phi_k}\R$. {This} allows us to fix the {joint} ambiguity in the definition of~$\phi_k$ and~$\xi_k$ by requiring that~$d\cT((v_{{\mathrm{in},k}}v_k))\in e^{i\phi_k}\R_+$. {Note that this also gives, e.g.,~$d\cO((v_{\mathrm{in},2k}v_{2k}))\in -ie^{-i\beta_{2k}^\tw}e^{i\phi_{2k}}\R_+=-ie^{i\xi_{2k}}\R_+$ and similarly for~$d\cO((v_{\mathrm{in},2k-1}v_{2k-1}))$.} {It is also easy to see that the condition (ii) 
implies that} $\phi_{k+1}-\phi_k\in (0,\pi)$ and~$\cos\xi_{k+1}>0$ for all~$k=1,\ldots,2n$.

This completes the proof since the last claim~$\sum_{k=1}^{2n}(\phi_{k+1}-\phi_k)=2\pi$ is nothing but a reformulation of the {condition~(iii) in Theorem~\ref{thm:perfect-gauge}.}
\end{proof}

\begin{corollary} \label{cor:dTO-cyclic}
In the same setup, let~$\alpha\in\mathbb{T}$ be such that~$\alpha^2\ne -ie^{i(\phi_k-\xi_k)}$ for all~$k=1,\ldots,2n$. Then, the increments~$d(\cT+\alpha^2\cO)((v_{{\mathrm{in},k}}v_k))$ do not vanish and have cyclically ordered directions. A similar statement holds for the increments $d(\cT+\alpha^2\overline{\cO})((v_{{\mathrm{in},k}}v_k))$ provided that~$\alpha^2\ne ie^{i(\phi_k+\xi_k)}$ for all~$k$.
\end{corollary}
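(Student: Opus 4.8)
The plan is to reduce the statement to a computation of the directions of the complex numbers
\[
g_k\ :=\ d(\cT+\alpha^2\cO)((v_{\mathrm{in},k}v_k))\,,\qquad k=1,\dots,2n,
\]
and to show that, as $k$ runs through the cycle, these wind around the origin exactly once counterclockwise (in the non-strict sense). First I would record, from Lemma~\ref{lem:boundary-on-H} and \eqref{eq:TO-via-phixi}, that $d\cT((v_{\mathrm{in},k}v_k))\in e^{i\phi_k}\R_+$ and $d\cO((v_{\mathrm{in},k}v_k))\in -ie^{i\xi_k}\R_+$, while the defining formulas \eqref{eq:TO-def-via-F} give $|d\cO((v_{\mathrm{in},k}v_k))|=|d\cT((v_{\mathrm{in},k}v_k))|$ (conjugating $\cF^\tw$ does not change its modulus). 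Hence $g_k$ is a positive real multiple of $e^{i\phi_k}-i\alpha^2 e^{i\xi_k}$, which vanishes exactly when $\alpha^2=-ie^{i(\phi_k-\xi_k)}$; since these are precisely the excluded values, the non-vanishing claim is immediate.

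For the directions I would write the nonzero vector as a difference of two unit complex numbers and use the elementary identity $e^{ia}-e^{ib}=2\sin(\tfrac{a-b}{2})\,e^{i((a+b)/2+\pi/2)}$. This shows that $\arg g_k$ equals $B_k$ or $B_k+\pi$ according to the sign of $\sin A_k$, where
\[
A_k=\tfrac12(\phi_k-\xi_k)+\mathrm{const}(\alpha)\,,\qquad B_k=\tfrac12(\phi_k+\xi_k)+\mathrm{const}(\alpha)\,,
\]
so that $A_k$ depends only on $\phi_k-\xi_k$ and $B_k$ only on $\phi_k+\xi_k$. The two-term structure of \eqref{eq:phi-phi=xi-xi} is exactly what makes this tractable: along an edge $v_{2j-1}\to v_{2j}$ the quantity $\phi-\xi$ is fixed, so the sign of $\sin A$ does not change and $\arg g_k$ increases by precisely $B_{2j}-B_{2j-1}=\phi_{2j}-\phi_{2j-1}\in(0,\pi)$; along an edge $v_{2j}\to v_{2j+1}$ the quantity $\phi+\xi$ is fixed, so $B$ is constant and $\arg g_k$ changes by $0$ or $\pi$, i.e. $g_{2j}$ and $g_{2j+1}$ are parallel or anti-parallel. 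Geometrically the latter simply reflects that the black face $b_j$ collapses to a segment in the T-graph $\cT+\alpha^2\cO$. In every case the increment of $\arg g_k$, taken in $[0,\pi]$, is nonnegative.

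It remains to check that the total increase is exactly $2\pi$. The odd-to-even increments sum to $\sum_{j=1}^n(\phi_{2j}-\phi_{2j-1})=\pi$, the identity following from \eqref{eq:phi-phi=xi-xi} together with $\phi_{2n+1}=\phi_1+2\pi$ (this is where hypothesis~(iii) of Theorem~\ref{thm:perfect-gauge} enters, via Lemma~\ref{lem:boundary-on-H}). For the even-to-odd increments I would note that $A_k$ is nondecreasing, constant on each odd-to-even edge, and increases over the full cycle by $\tfrac12\big((\phi_{2n+1}-\xi_{2n+1})-(\phi_1-\xi_1)\big)=\pi$; since the excluded values of $\alpha$ guarantee that no $A_k$ is a multiple of $\pi$, the step function $A$ crosses exactly one multiple of $\pi$, so $\sin A$ flips sign exactly once and the even-to-odd increments contribute a total of exactly $\pi$. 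Adding the two contributions gives total winding $2\pi$, and combined with the nonnegativity of all increments this yields the asserted cyclic ordering.

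The step I expect to be the genuine obstacle is this winding bookkeeping, for two reasons. First, the ordering here is necessarily non-strict — consecutive edges bounding a degenerate (segment) face are parallel — so one cannot hope for strictly increasing arguments and must argue instead that a single anti-parallel pair supplies the missing $\pi$; getting this count right is what rules out total windings $0$ or $4\pi$ and forces the simultaneous use of the identity $\sum(\phi_{2j}-\phi_{2j-1})=\pi$ and of the exclusion of the degenerate directions of $\alpha$. Second, the case of $\cT+\alpha^2\overline{\cO}$ must be treated, but it is identical after conjugating $\cO$: one has $\overline{d\cO}\in ie^{-i\xi_k}\R_+$, the increments are governed by $e^{i\phi_k}+i\alpha^2 e^{-i\xi_k}$, the roles of $\phi+\xi$ and $\phi-\xi$ in $A_k,B_k$ are interchanged, and the degenerate set becomes $\alpha^2\neq ie^{i(\phi_k+\xi_k)}$, exactly as stated.
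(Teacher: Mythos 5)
Your proof is correct and follows essentially the same route as the paper: the identity $g_k\propto e^{i\phi_k}-i\alpha^2e^{i\xi_k}$ and its factorization into a direction governed by $\tfrac12(\phi_k+\xi_k)$ times a sign governed by $\tfrac12(\phi_k-\xi_k)$ is exactly the paper's displayed computation, and the excluded values of $\alpha$ are identified the same way. The only difference is that the paper dispatches the cyclic ordering by appealing directly to the cyclic ordering of the two families of tangent directions $ie^{i(\phi_k-\xi_k)}$ and $-ie^{i(\phi_k+\xi_k)}$, whereas you spell out the same fact via the explicit winding count ($\pi$ from the white edges, $\pi$ from the single sign flip on the black edges); that bookkeeping is accurate and is a fair elaboration of the step the paper leaves implicit.
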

\begin{proof} A simple computation based upon~\eqref{eq:TO-via-phixi} shows that the increment $d(\cT\!+\!\alpha^2\cO)((v_{{\mathrm{in},k}}v_k))$ is proportional (with a positive real coefficient) to
\[
e^{i\phi_k}+\alpha^2 \cdot (-ie^{i\xi_k})\ =\ e^{-i\frac\pi 4}\alpha\cdot 2\Re\big[\,e^{i\frac\pi 4}\overline{\alpha}\,e^{\frac{i}2(\phi_k-\xi_k)} \big]\cdot e^{\frac{i}{2}(\phi_k+\xi_k)}.
\]
The fact that these directions are cyclically ordered follows from the fact that the directions~${ie^{i\beta^\tw_k}=ie^{i(\phi_{2k-1}-\xi_{2k-1})}=ie^{i(\phi_{2k}-\xi_{2k}})}$ of the lines containing the boundary edges~${(\cT(v_{2k-1})\cT(v_{2k}))}$ of~$\G^*$ are cyclically ordered and that the same holds for the directions~${-ie^{i\beta^\tb_k}=-ie^{i(\phi_{2k}+\xi_{2k})}=-ie^{i(\phi_{2k+1}+\xi_{2k+1})}}$. The consideration of the increments of the mapping~$\cT+\alpha^2\overline{\cO}$ is similar.
\end{proof}

\begin{lemma} \label{lem:T-no-strict-max} Let $F^\tb:B\to\R$ and~$F^\tw:W\to \R$ be real valued functions satisfying the identities~\eqref{eq:Coulomb-def} and let~$T=T_{(F^\tb,\,F^\tw)}:V(\G^*)\to\R$ be defined by~\eqref{eq:TO-def-via-F}. Then, the function~$T$ cannot have a strict local extremum at an inner vertex of~$\G^*$.
\end{lemma}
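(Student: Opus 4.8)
The plan is to run a purely local sign-counting argument around the vertex, using only the Kasteleyn sign rule and the bipartite combinatorics. An inner vertex $v$ of $\G^*$ is dual to an inner face of $\G$; since $\G$ is bipartite this face has even degree, say $2d$, and its boundary is an alternating cycle of black and white vertices $b_1,w_1,\dots,b_d,w_d$ of $\G$. Correspondingly the $2d$ edges of $\G^*$ emanating from $v$ are dual to the boundary edges $(bw)$ of this face, and in cyclic order they alternately separate a \emph{black-then-white} pair and a \emph{white-then-black} pair (this is exactly the alternation of the colors of the faces of the t-realisation around $v$). First I would record, for each incident edge $e=(bw)^*$ with far endpoint $v_e$, the increment
\[
T(v_e)-T(v)\ =\ \eta_e\, F^\tb(b)\,K_\R(b,w)\,F^\tw(w),
\]
where $\eta_e\in\{+1,-1\}$ records whether the orientation ``$b$ to the right'' of $(bw)^*$ points out of or into $v$. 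The key observation is that this orientation points out of $v$ along one type of edge and into $v$ along the other, so $\eta_e$ \emph{alternates} around $v$; among the $2d$ edges exactly $d$ carry $+1$ and $d$ carry $-1$, whence $\prod_e\eta_e=(-1)^d$.

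Next I would assume, for contradiction, that $v$ is a strict local extremum, so that all the increments $T(v_e)-T(v)$ are nonzero and share one common sign $\epsilon\in\{+1,-1\}$. In particular $F^\tb(b),K_\R(b,w),F^\tw(w)$ are all nonzero on the star of $v$, and the sign $\sigma(e):=\sign\!\big(F^\tb(b)K_\R(b,w)F^\tw(w)\big)$ equals $\epsilon\,\eta_e$; hence $\prod_e\sigma(e)=\epsilon^{2d}\prod_e\eta_e=(-1)^d$. On the other hand, factoring $\sigma(e)=\sign F^\tb(b)\cdot\sign K_\R(b,w)\cdot\sign F^\tw(w)$, each boundary vertex $b_i$ (resp.\ $w_i$) is incident to exactly two of the edges around $v$, so the factors $\sign F^\tb(b_i)$ and $\sign F^\tw(w_i)$ enter $\prod_e\sigma(e)$ an even number of times and cancel. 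Thus $\prod_e\sigma(e)=\prod_e\sign K_\R(b,w)$, the product of Kasteleyn signs around the face of $\G$ dual to $v$.

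The final step is to invoke the Kasteleyn sign rule, which states that the product of the signs of $K_\R$ around the vertex $v$ of $\G^*$ equals $(-1)^{\frac12\deg v-1}=(-1)^{d-1}$. Comparing the two evaluations of $\prod_e\sigma(e)$ yields $(-1)^d=(-1)^{d-1}$, a contradiction; the identical computation rules out both a strict maximum and a strict minimum. Note that the Coulomb conditions~\eqref{eq:Coulomb-def} are used only to guarantee that the form $d\cT$ integrates to a single-valued function $T$ on $V(\G^*)$ (closedness around the faces of $\G^*$), so that the values $T(v_e),T(v)$ are well defined; the extremum argument itself is the parity count above and does not otherwise use harmonicity.

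The main obstacle I expect is purely bookkeeping: fixing the orientation convention so that the alternation $\eta_e\mapsto-\eta_e$ along the cyclic order is verified correctly, and matching the exponent in the Kasteleyn rule so that a face of degree $2d$ contributes $(-1)^{d-1}$ rather than $(-1)^{d}$ (a discrepancy of exactly $1$ in the exponent is what produces the contradiction, so this must be gotten right). One should also remark that the pairwise cancellation of the gauge signs only requires each boundary vertex of the face to be incident to an even number of the edges around $v$, which holds whenever the faces of $\G$ are simple, as for proper t-embeddings.
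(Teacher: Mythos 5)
Your argument is correct and is essentially the paper's own proof: the paper condenses the same computation into the single identity $\prod_{k=1}^{2m}dT(e_k)=-\prod_k\chi_{e_k}\cdot\prod_k(F^\tb(b_k))^2\cdot\prod_k(F^\tw(w_k))^2\le 0$, where the minus sign is exactly your parity count combining the Kasteleyn exponent $(-1)^{m-1}$ with the orientation alternation $(-1)^m$, and a strict extremum would force this product of an even number of same-signed nonzero increments to be positive. Your unpacked sign bookkeeping, including the cancellation of the gauge signs in pairs and the role of the Coulomb conditions only in making $T$ single-valued, matches the intended argument.
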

\begin{proof} This lemma is similar to~\cite[Proposition~3.6]{russkikh-t} and~\cite[Proposition~2.10]{chelkak-s-emb}. Let~$v$ be an inner vertex of~$\G^*$, $e_1,\ldots,e_{2m}$ be the edges emanating from this vertex, and~$b_1,\ldots,b_m\in B$, $w_1,\ldots,w_m\in W$ denote the faces of~$\G^*$ that are adjacent to~$u$. We have
\[
\textstyle \prod_{k=1}^{2m}dT(e_k)\ =\ -\prod_{k=1}^{2m}\chi_{e_k}\cdot\prod_{k=1}^m(F^\tb(b_k))^2\cdot \prod_{k=1}^m(F^\tw(w_k))^2\ \le\ 0\,,
\]
the minus sign comes from the fact that the entries~$K_\R(e_k)=\pm \chi_{e_k}$ of the matrix~$K_\R$ satisfy the Kasteleyn condition around~$v$.
\end{proof}
\begin{corollary} \label{cor:max-principle} Let~$(\cF^\tb,\cF^\tw)$ be non-degenerate Coulomb gauge functions and~$F^\tb(\cdot)=\Re[\overline\alpha \cF^\tb(\cdot)]$, $F^\tw(\cdot)=\Re[\overline{\beta}\cF^\tw(\cdot)]$, where~$\alpha,\beta\in\mathbb{T}$. Then, the function~$T=T_{(F^\tb\,,F^\tw)}$ satisfies the maximum principle: its extrema on any subgraph of~$\G^*$ are attained at the boundary of this subgraph.
\end{corollary}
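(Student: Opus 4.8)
The plan is to reduce the statement to Lemma~\ref{lem:T-no-strict-max} and then to upgrade the resulting absence of \emph{strict} local extrema to a genuine maximum principle by a small perturbation of the projection directions $\alpha,\beta$, using non-degeneracy of the gauge.

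First I would check that $T=T_{(F^\tb,F^\tw)}$ is well defined and that Lemma~\ref{lem:T-no-strict-max} applies. Since $K_\R$ is real-valued, applying $\Re[\overline\alpha\,\cdot\,]$ to $[K^\top\cF^\tb](w)=0$ and $\Re[\overline\beta\,\cdot\,]$ to $[K\cF^\tw](b)=0$ yields $[K^\top F^\tb](w)=0$ for all inner $w$ and $[K F^\tw](b)=0$ for all inner $b$; hence $F^\tb,F^\tw$ satisfy~\eqref{eq:Coulomb-def}, the $1$-form in~\eqref{eq:TO-def-via-F} is closed around every face of $\G^*$, and $T$ is well defined up to an additive constant. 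In particular, the identity established in the proof of Lemma~\ref{lem:T-no-strict-max},
\[
\prod_{k=1}^{2m} dT(e_k)\ =\ -\prod_{k=1}^{2m}\chi_{e_k}\cdot\prod_{k=1}^{m}(F^\tb(b_k))^2\cdot\prod_{k=1}^{m}(F^\tw(w_k))^2,
\]
holds at every inner vertex $v$ of $\G^*$, with adjacent edges $e_1,\dots,e_{2m}$ and faces $b_1,\dots,b_m,w_1,\dots,w_m$.

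Next I would isolate the strict mechanism. If none of the numbers $F^\tb(b_k)$, $F^\tw(w_k)$ vanishes at $v$, the right-hand side above is strictly negative; being a product of an even number $2m$ of non-zero real increments, at least one increment $dT(e_k)$ is strictly positive and at least one is strictly negative. Thus at such a vertex $T$ has both a strictly larger and a strictly smaller neighbour, so it is neither a weak local maximum nor a weak local minimum. Consequently, if these values are non-zero at \emph{every} inner vertex, then no interior vertex of any subgraph $G'$ (an inner vertex of $\G^*$ all of whose neighbours lie in $G'$) can realise $\max_{G'}T$ or $\min_{G'}T$, and the identities $\max_{G'}T=\max_{\partial G'}T$ and $\min_{G'}T=\min_{\partial G'}T$ hold verbatim.

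Finally I would remove the non-vanishing hypothesis. For $\theta,\phi\in\R$ set $F^\tb_\theta:=\Re[e^{-i\theta}\overline\alpha\,\cF^\tb]$ and $F^\tw_\phi:=\Re[e^{-i\phi}\overline\beta\,\cF^\tw]$; these again satisfy~\eqref{eq:Coulomb-def}, so they define functions $T_{\theta,\phi}$ depending continuously on $(\theta,\phi)$ with $T_{0,0}=T$. Because the gauge is non-degenerate, $\cF^\tb(b)\ne 0$ and $\cF^\tw(w)\ne 0$ for all faces, so for all but finitely many $\theta$ (resp.\ $\phi$) every $F^\tb_\theta(b)$ (resp.\ $F^\tw_\phi(w)$) is non-zero; for such generic $(\theta,\phi)$ the previous paragraph gives the maximum and minimum principles for $T_{\theta,\phi}$ on every subgraph. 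Letting $(\theta,\phi)\to(0,0)$ along generic values and using that $T_{\theta,\phi}\to T$ on the finite vertex set while $\max$ and $\min$ are continuous functionals, I would obtain the same identities for $T$. The main obstacle is precisely this gap: Lemma~\ref{lem:T-no-strict-max} forbids only \emph{strict} local extrema, and on a flat plateau the naive propagation argument need not reach $\partial G'$; the perturbation step, crucially exploiting non-degeneracy of the Coulomb gauge to destroy such plateaux, is what turns the sign identity into an honest maximum principle.
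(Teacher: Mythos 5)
Your proposal is correct and follows essentially the same route as the paper: first treat the case where all $F^\tb(b)$, $F^\tw(w)$ are non-zero via the sign identity of Lemma~\ref{lem:T-no-strict-max}, then use the non-vanishing of the complex gauge functions $\cF^\tb,\cF^\tw$ to perturb the projection directions and pass to the limit. The only difference is cosmetic — you spell out why a strictly negative product of an even number of increments forces both a strictly positive and a strictly negative increment at each inner vertex, a step the paper leaves implicit.
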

\begin{proof} Assume first that {$F^\tb(b)\ne 0$ and $F^\tw(w)\ne 0$ for all~$b\in B$ and~$w\in W$.} In this case the function~$T_{(F^\tb,F^\tw)}$ does not have zero increments along edges of~$\G^*$ and thus satisfies the maximum principle due to Lemma~\ref{lem:T-no-strict-max}. In order to treat the general case one replaces~$F^\tb$ by~$F^\tb_\varepsilon:=\Re[e^{i\varepsilon} \overline{\alpha}\cF^\tb]$ and similarly for~$F^\tw$ with~$\varepsilon\to 0$. Since the complex-valued functions~$\cF^\tb$ and~$\cF^\tw$ have no zero, this allows one to destroy possible degeneracies, apply the maximum principle for the new function~$T\!=\!T_\varepsilon$, and then pass to the limit as~$\varepsilon\!\to\!0$.
\end{proof}

We are now in the position to prove Theorem~\ref{thm:perfect-gauge}.
\begin{proof}[Proof of Theorem~\ref{thm:perfect-gauge}] Recall that we only need to prove that the properties (i)--(iii) imply that~$\cT$ is a perfect t-embedding. Let~$P$ be the image of the outer face of~$\G^*$ under the mapping~$\cT$. It follows from Lemma~\ref{lem:boundary-on-H} that the (possibly, non-convex) polygon~$P$ is tangential to the unit circle and that the edges~$(v_{{\mathrm{in},k}}v_k)$ are mapped on the inner bisectors of its angles. The angle condition in Definition~\ref{def:t-emb} holds automatically provided that~$\cT$ is defined via~\eqref{eq:TO-def-via-F} and~$K_\R$ satisfies the Kasteleyn condition. Thus, we only need to show that~$\cT$ is a \emph{proper} embedding of~$\G^*$, i.e., that
\begin{enumerate}\renewcommand\theenumi{\alph{enumi}}
\item all faces of~$\G^*$ are mapped by~$\cT$ onto convex polygons;
\item all these polygons have the same orientation;
\item images of distinct faces of~$\G^*$ do not overlap and fill the interior of~$P$.
\end{enumerate}
This can be done following the lines of~\cite[Theorem~4.6]{kenyon-sheffield} and~\cite[Lemma~14]{KLRR} as we now explain.

To prove (a) consider, e.g., a face~$b\in B$ of~$\G^*$ and assume for a while that~$b\not\in\partial B$. Choose a generic value~$\alpha\in\mathbb{T}$ such that~$d(\cT+\alpha^2\overline{\cO})(e)\ne 0$ for all edges~$e$ of~$\G$. (Note that such a choice is always possible as we assume that the complex-valued gauge function~$\cF^\tw$ never vanishes.) The images of~$b$ under~$\cT$ and under~$\cT+\alpha^2\overline{\cO}$ are homothetical to each other, thus it suffices to prove that~$(\cT+\alpha^2\overline{\cO})(b)$ is a convex polygon. Consider now a function $T(v):=\Re[\overline{\beta}(\cT+\alpha^2\overline{\cO})(v)]$ defined on vertices of~$\G^*$, where~$\beta\in\mathbb{T}$ is a generic direction chosen so that~$dT(e)\ne 0$ for all edges of~$e$. Assume that \mbox{$(\cT+\alpha^2\overline{\cO})(b)$} is not a convex polygon. In this case one can choose~$\beta$ so that the values of $T$ at vertices of~$b$ are not cyclically ordered, i.e., there exist four cyclically order vertices~$y_0,y_1,y_2,y_3$ of~$b$ such that $T(y_p)>T(y_q)$ if~$p\in\{1,3\}$ and \mbox{$q\in\{0,2\}$}. However, it is easy to see that $T=T_{(F^\tb,F^\tw)}$, where~$F^\tb=\Re[\overline{\alpha}\cF^\tb]$ and~$F^\tw=\Re[\overline{\beta}\cF^\tw]$, and thus this function satisfies the maximum principle; see Corollary~\ref{cor:max-principle}. Hence, we can find four non-intersecting paths started at~$y_0,y_1,y_2,y_3$ and going to the boundary of~$\G^*$ along which the function~$T$ is decreasing, increasing, decreasing, increasing, respectively. This contradicts to Corollary~\ref{cor:dTO-cyclic} which implies that the increments~$dT((v_{{\mathrm{in},k}}v_k))$ are strictly positive along a certain boundary arc of $v_\mathrm{out}$ and strictly negative along the complementary arc.

It is not hard to check that the same proof also works if~$b\in\partial B$. Indeed, if, say,~$y_0=v_k$ is a boundary vertex of~$\G^*$, then either~$dT((v_{{\mathrm{in},k}}v_k))<0$, which does not break the argument, or \mbox{${T(v_{\mathrm{in},k})<T(v_k)}$} and hence one can take~$y_0:=v_{{\mathrm{in},k}}$ instead of~$v_k$. Therefore, $\cT(b)$ is a convex polygon for each~$b\in B$. The consideration of the images~$\cT(w)$, $w\in W$, is fully similar.

We now move to the proof of the property (b). It is enough to prove that, say, for each~$b\in B\smallsetminus\partial B$ all the faces~$\cT(w)$ with~$w\sim b$ have the same orientation. (Indeed, this implies that all faces~$\cT(w)$, $w\in W$, have the same orientation and similarly for~$\cT(b)$, $b\in B$; however, \emph{all} boundary faces share the same orientation.) Similarly to the proof of (a), choose a generic \mbox{$\alpha\in\mathbb{T}$} in order to avoid possible degeneracies and consider the image of the graph~$\G^*$ under the mapping~$T_\alpha:=\cT+\alpha^2\cO$. Recall that~$\cT_\alpha(b)$ is a segment obtained by projecting~$\cT(b)$ onto a certain direction depending on~$b$ while the images~$T_\alpha(w_k)$ are homothetical to~$\cT(w_k)$. Let~$x_k$ be vertices of~$b$ labeled in a cyclic order so that~$T_\alpha$ maps each of the boundary arcs $(x_1 x_2\ldots x_m)$ and $(x_m \ldots x_0 x_1)$ of~$b$ onto the segment $T_\alpha(b)=[T_\alpha(x_1),T_\alpha(x_m)]$; see Fig.~\ref{fig:orient}. Let~$w_k$ be the white face adjacent to~$b$ that shares the edge~$(x_k x_{k+1})$ with~$b$.

\begin{figure}
\center{\includegraphics[width=0.92\textwidth]{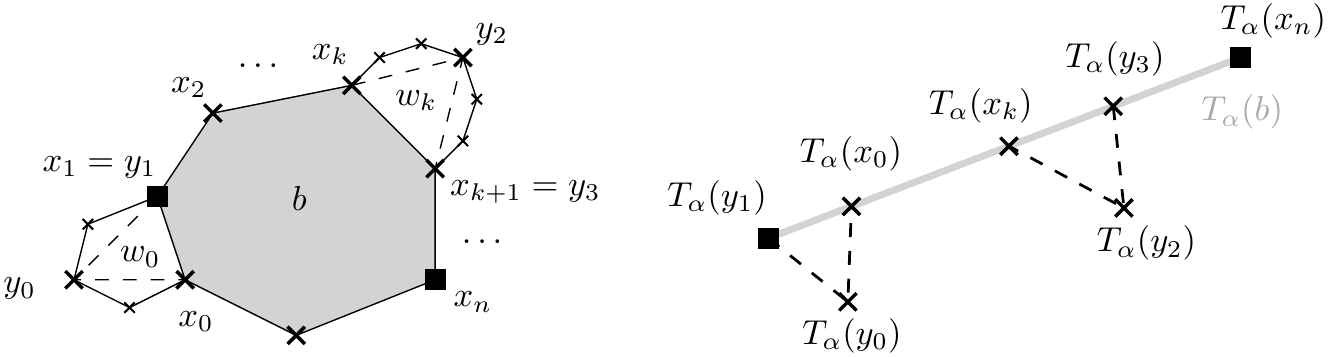}}
\caption{The assumption that the faces~$T_\alpha(w_0)$ and~$T_\alpha(w_k)$, where~$T_\alpha:=\cT+\alpha^2\cO$, have opposite orientation leads to a contradiction with the maximum principle for the projection~$v\mapsto \Re[\overline{\beta}T_\alpha(v)]$ onto the direction~$\beta\R\perp T_\alpha(b)$.}
\label{fig:orient}
\end{figure}

Let us show that~$\cT(w_0)$ and~$\cT(w_k)$ have the same orientation for each \mbox{$k\le m-1$}; together with a similar argument applied to~$\cT(w_m)$ and~$\cT(w_k)$, $k\ge m+1$, this easily implies that all white faces adjacent to~$b$ have the same orientation. Let~$y_0$ and~$y_2$ be some vertices of the faces~$w_0$ and~$w_k$, respectively, not adjacent to~$b$. Also, let us denote~$y_1:=x_1$ and~$y_3:=x_{k+1}$; see Fig.~\ref{fig:orient}. Similarly to the proof of the property (a) given above, we now apply the maximum principle provided by Lemma~\ref{lem:T-no-strict-max} to the function~$\Re[\overline{\beta}T_\alpha]$, where the direction~$\beta\in\mathbb{T}$ is chosen so that the line~$\beta\R$ is orthogonal to the segment~$T_\alpha(b)$. This implies that the points~$T_\alpha(y_0)$ and~$T_\alpha(y_2)$ must lie at different sides of the segment~$T_\alpha(b)$ since the projections of~$T_\alpha(y_1)$ and of~$T_\alpha(y_3)$ onto the direction~$\beta\R$ coincide. Therefore, $T_\alpha(w_0)$ and~$T_\alpha(w_k)$ have the same orientation and hence the same holds for~$\cT(w_0)$ and~$\cT(w_k)$.

It remains to prove the property (c). This easily follows from (a), (b) and from the argument principle: since all faces of the t-embedding~$\cT$ have the same orientation, a given point~$z$ in~$\C$ is covered exactly $\operatorname{ind}_P(z)$ times, where~$\mathrm{ind}_P(z)$ is the index of the boundary polygon~$P$ with respect to~$z$. Hence, either all points inside $P$ are covered exactly ones and those outside~$\partial\cT$ are not covered, or vice versa; the latter is clearly impossible.
\end{proof}

\subsection{Open questions} \label{sub:questions}
We conclude this paper by mentioning several open questions that naturally arise in the context of our work.
\begin{oquestion} Prove that each, sufficiently non-degenerate, weighted planar bipartite graph~$\G$ with a marked outer face admits a perfect t-embedding.
\end{oquestion}
It is worth noting that the simplest case~$\deg v_\mathrm{out}=4$ can be worked out by straightforward computations relying upon Theorem~\ref{thm:perfect-gauge}; see also~\cite[Theorem~2]{KLRR}. However, such a brute force analysis seems to be too difficult already if~$\G$ is a weighted hexagonal prism since the conditions $(\cT(v_k),\cO(v_k))\in\rH$ lead to biquadratic equations in the variables~$\cF^\tb(b_k)$,~$\cF^\tw(w_k)$, $k=1,\ldots,n$.

\begin{oquestion} Prove that perfect t-embeddings of a given weighted bipartite graph {(provided they exist)} are unique up to the transforms {of the gauge functions~$\cF^\tb,\cF^\tw$} described in Remark~\ref{rem:Lorentz-isometries}.
\end{oquestion}

Clearly, the existence and uniqueness of perfect t-embedding of big dimer graphs is a crucial ingredient for our approach to the analysis of the bipartite dimer model. Unfortunately, at the moment we are not aware of any result in this direction that goes beyond several particular cases.

Before formulating the next question, let us briefly describe why we expect that the assumption on the convergence of the graphs of origami maps to a \emph{Lorentz-minimal} surface~$(z,\vartheta(z))_{z\in \Omega}$ made in our Theorem~\ref{thm:main-GFF} is not artificial. It is not hard to see that if this surface is \emph{not} Lorentz-minimal, then the functions~$\psi_\frb$ and~$\psi_\frw$ introduced in Proposition~\ref{prop:hol-in-zeta} are not holomorphic in the conformal metric of the surface but rather satisfy the massive holomorphicity equations
\[
\opazeta\psi_\frb(\zeta)\ = m(\zeta)\overline{\psi_\frb(\zeta)},\qquad \opazeta\psi_\frw(\zeta)\ = \overline{m(\zeta)}\cdot \overline{\psi_\frb(\zeta)}
\]
with~$m(\zeta)\ne 0$. (E.g., see~\cite[Section~2.7]{chelkak-s-emb} for a relevant discussion under the additional assumption~$\vartheta\in\R$ specific to the Ising model setup.) In the theoretical physics language, in this setup the limit of height fluctuations should be described by the bosonization of free \emph{massive fermions}, which leads to the \emph{sine-Gordon theory} with a special value of the interaction parameter rather than to free bosons; e.g., see~\cite{bauerschmidt-webb} and references therein. Coming back to the dimer model language, {this means that we do not expect the height fluctuations to have a Gaussian structure in the small mesh size limit if~$m(\zeta)\ne 0$;} cf. concrete examples of such a behavior treated in~\cite{chhita}.

On the other hand, following Kenyon and Okounkov~\cite{kenyon-okounkov} it is widely believed that in many, if not all, reasonable setups for the dimer model considered on subgraphs of periodic grids, the Gaussian structure of height fluctuations \emph{do} appear in the scaling limit. This suggests that the appearance of Lorentz-minimal surfaces in the limit of the graphs of origami maps constructed out of (perfect) t-embeddings of big pieces of periodic grids should be rather a general phenomenon than a specific property of, e.g., the classical Aztec diamonds considered in~\cite{chelkak-ramassamy}. {This suggests the next question:}

\begin{oquestion} Find a proper notion of discrete Lorentz-minimal surfaces that correspond to the graphs of the origami maps obtained from (perfect) t-embeddings of big pieces of a given \emph{periodic} weighted bipartite graph~$\G$.
\end{oquestion}

Finally, it would be interesting to develop a general correspondence between the picture developed by Kenyon, Okounkov and Sheffield in~\cite{kenyon-okounkov,kenyon-okounkov-sheffield} on the one side and the context of perfect t-embeddings on the other. {As in the case of big homogeneous Aztec diamonds (see~\cite{chelkak-ramassamy}),} we expect that frozen regions are always collapsed to vertices of the polygonal domain~$\Omega_\xi$; this also seems to be the only scenario compatible with Theorem~\ref{thm:main-GFF}. Similarly, we conjecture that the \emph{gaseous bubbles} should collapse to points inside~$\Omega_\xi$ and the graphs of the origami maps should converge to \emph{Lorentz-minimal cusps} near these points. To a certain extent, this conjecture is supported by an explicit analysis of the dimer model on big cylinders obtained from the square grid and their perfect t-embeddings; see~\cite{russkikh-cusp} for details. Let us summarize the preceding discussion in the following vague question.
\begin{oquestion}
Find a precise correspondence between the classical phenomenology of frozen/liquid/gazeous zones appearing in the dimer model in polygonal domains on periodic grids on the one side, and the structure of perfect t-embeddings of these dimer graphs on the other.
\end{oquestion}
{Certainly,} the research program outlined {above} is very long and can become questionable in the unfortunate case that the existence/uniqueness of perfect t-embeddings does not hold in general. However, we believe that 
Theorem~\ref{thm:main-GFF} provides an interesting twist in the analysis of the bipartite dimer model by means of discrete complex analysis even if the range of its potential applications will turn out to be limited to a certain subclass of weighted bipartite graphs rather than to all of them.


\end{document}